\documentclass{article}
\usepackage{graphicx} %
\usepackage{subcaption}
\usepackage[
    a4paper,
    left=2.8cm,
    right=2.5cm,
    top=2.5cm,
    bottom=2.5cm,
    headsep=0.8cm,
    footskip=1cm
]{geometry}

\newcounter{case}
\newcommand{\mycase}{\refstepcounter{case}\textbf{Case \thecase.}}

\usepackage{graphicx} 
\usepackage{amsmath,amssymb}
\usepackage{mathrsfs}
\usepackage{amsthm}
\usepackage{subcaption}
\usepackage{comment}
\usepackage{float}
\usepackage{xcolor}
\usepackage{hyperref}
\usepackage{cleveref}
\usepackage[square,numbers]{natbib} %

\usepackage{thmtools}
\usepackage{thm-restate}
\usepackage{bm}

\newcommand{\intfour}{[4]}
\newcommand{\inttwo}{[2]}
\newcommand{\intthree}{[3]}

\newtheorem{theorem}{Theorem}[section]
\newtheorem{Definition}{Definition}[section]
\newtheorem{lemma}[theorem]{Lemma}
\newtheorem{proposition}[theorem]{Proposition}
\newtheorem{problem}{Problem}[section] 
\newtheorem{claim}{Claim}[section]
\newtheorem{corollary}{Corollary}[section]
\newtheorem{conjecture}{Conjecture}[section]

\usepackage{tikz}
\usetikzlibrary{arrows.meta,positioning}
\usepackage{booktabs}
\usetikzlibrary{backgrounds}

\usetikzlibrary{arrows.meta,shapes.geometric}

\definecolor{starfill}{RGB}{0,0,0}
\definecolor{diamondfill}{RGB}{0,0,0}
\definecolor{pathgray}{RGB}{0,0,0}
\definecolor{qgray}{RGB}{0,0,0}
\definecolor{highlightgray}{RGB}{210,210,210}
\definecolor{highlightdark}{RGB}{0,0,0}
\title{The unilateral 2-linkage problem}

\begin{document}

\author{
J{\o}rgen Bang-Jensen$^{1,2}$\;\quad Xiaoying Chen$^{1}$\;\quad  Panpan Cheng$^{1}$\thanks{Corresponding author: Panpan Cheng (ppancheng@126.com).} \;\quad Jin Yan$^{1}$
\\[0.4em]
\begin{minipage}{0.92\textwidth}
\centering
\small
$^{1}$ School of Mathematics, Shandong University, Jinan 250100, China\\[2pt]
$^{2}$ Department of Mathematics and Computer Science, University of Southern Denmark, Odense DK-5230, Denmark
\end{minipage}
}

\date{}

\maketitle 

\begin{abstract}
    We study a new variant of the classical linkage problem for digraphs, which we call {\bf unilateral linkages.} A digraph $D$ is {\bf unilaterally $\bm{k}$-linked} if it satisfies that for every choice of $2k$ distinct vertices 
$s_1,s_2,\ldots{},s_k,t_1,t_2,\ldots{},t_k$ there are vertex disjoint directed paths $P_1,\ldots{},P_k$ in $D$ such that  $P_i$ is either an $(s_i,t_i)$-path or a $(t_i,s_i)$-path for each $i\in [k]$.
This is a relaxation of the property of $D$ being $k$-linked, where we insist that each $P_i$ must go from $s_i$ to $t_i$. Clearly every $k$-linked digraph is also unilaterally $k$-linked. 

It is easy to see that the unilateral linkage problem is NP-complete for general digraphs and that it is trivial for semicomplete digraphs (digraphs with no pair of non-adjacent vertices). We show how Thomassen's construction of highly connected digraphs which are not 2-linked can be modified to show that there is no connectivity which guarantees that a digraph is unilaterally 2-linked. 
We show that every 4-strong split digraph and every 5-strong semicomplete multipartite digraph is unilaterally 2-linked. Both bounds are best possible. A digraph is {\bf co-bipartite} if its vertex set can be covered by two induced semicomplete digraphs. We prove that every 6-strong co-bipartite digraph is unilaterally 2-linked. Here we do not know whether the correct answer is 5. We also prove that every 16-strong digraph with independence number at most 2 is unilaterally 2-linked.
\end{abstract}

\noindent{}{\bf Keywords:} Unilateral linkages; linkage problem; split digraph; semicomplete multipartite digraph; co-bipartite digraph; independence number; NP-complete
\section{Introduction}

Notation and terminology in this paper is consistent with \cite{bang2009}. 
A {\bf $k$-linkage} from $s_1,s_2,\ldots{},s_k$ to $t_1,t_2,\ldots{},t_k$ in a (di)graph $G$ is a collection of $k$ vertex disjoint paths $P_1,P_2,\ldots{},P_k$ such that $P_i$ is an $(s_i,t_i)$-path for $i\in [k]$. The {\bf $k$-linkage problem} is the problem of deciding for a given (di)graph $G$ and ordered sets of vertices $(s_1,s_2,\ldots{},s_k)$ and $(t_1,t_2,\ldots{},t_k)$; whether $G$ has a $k$-linkage from $(s_1,s_2,\ldots{},s_k)$ to $(t_1,t_2,\ldots{},t_k)$.
A (di)graph $G$ is {\bf $k$-linked}  if it has a $k$-linkage from $(s_1,s_2,\ldots{},s_k)$ to $(t_1,t_2,\ldots{},t_k)$ for every choice of distinct vertices $s_1,s_2,\ldots{},s_k,t_1,t_2,\ldots{},t_k$.

Linkage problems are well understood for undirected graphs.
Robertson and Seymour proved that the $k$-linkage problem is polynomially solvable for every fixed $k$ for undirected graphs \cite{robertsonJCT63}. Many people worked on sufficient conditions for a graph to be $k$-linked. The strongest result is due to Thomas and Wollan who proved that every $2k$-connected graph on $n$ vertices with at least $5kn$ edges is $k$-linked  \cite{thomasEJC26}.

For digraphs the situation is very different: Fortune, Hopcroft and Wyllie proved that the $k$-linkage problem is NP-complete for all $k\geq 2$ \cite{fortuneTCS10} and  
Thomassen \cite{thomassenC11} proved that there exist $K$-strong non-2-linked digraphs for every natural number $K$ (see Theorem \ref{thm:thomassen} below).
This has inspired research on classes of digraphs for which one can solve the $k$-linkage problem in polynomial time as well as finding sufficient conditions for being $k$-linked in specific classes of digraphs. We list some of the results below.

\begin{itemize}

\item[(A)] In their seminal paper \cite{fortuneTCS10} Fortune Hopcroft and Wyllie proved that for every fixed $k$ the $k$-linkage problem is polynomial for acyclic digraphs.
\item[(B)] Bang-Jensen and Thomassen \cite{bangSJDM5}
proved that the 2-linkage problem is polynomial for semicomplete digraphs.

\item[(C)] Chudnovsky, Scott and Seymour \cite{chudnovskyAM270} generalized the result in (B) to all fixed $k$ and  more general classes of digraphs called $d$-path dominant digraphs. For a fixed integer $d$ a digraph $D$ is {\bf $\bm{d}$-path dominant} if the vertex set of every minimal path $P$ in $D$ with $d$ vertices is adjacent to all vertices of $V(D)-V(P)$. That is, every vertex of $V(D)-V(P)$ has at least one neighbour in $V(P)$.
Thus semicomplete digraphs correspond to the case $d=1$ and semicomplete multipartite digraphs to the case $d=2$.

\item[(D)]Bang-Jensen \cite{bangADM41} proved that every 5-strong semicomplete digraph is 2-linked and constructed 4-strong tournaments that are not 2-linked. 

\item[(E)]In a highly influential paper Pokrovskiy \cite{pokrovskiyJCT115} proved that every 452$k$-strong tournament is $k$-linked and conjectured the existence of a function $g(k)$ such that every $2k$-strong tournament with minimum in- and out-degree at least $g(k)$ is $k$-linked. 
\item[(F)]This was almost confirmed by Gir{\~{a}}o, Popielarz and Snyder \cite{giraoC41} who proved the existence of $f(k)$ so that every $(2k+1)$-strong tournament with minimum out-degree $f(k)$ is $k$-linked. 
\item[(G)] Finally Zhou and Yan \cite{zhouJCT176} disproved Pokrovskiy's conjecture by constructing $2k$-strong tournaments on $n\geq 14k^2$ vertices with arbitrarily high minimum in- and out-degrees (depending on $n$) which are not $k$-linked.
\end{itemize}

In  this paper we introduce a natural relaxation of the $k$-linkage problem for digraphs and prove its relevance through a series of results.

\begin{Definition}
Let $D=(V,A)$ be a digraph. Two disjoint ordered $k$-element subsets $S=(s_1,s_2,\dots,s_k)$ and $T=(t_1,t_2,\dots,t_k)$ of $V$ are {\bf unilaterally linked} if there exist vertex‑disjoint directed paths $P_1,P_2,\dots,P_k$ such that for each index $i$, $P_i$ is either an $(s_i,t_i)$‑path or a $(t_i,s_i)$‑path in $D$.
A digraph $D$ is {\bf unilaterally $k$-linked} if for every choice of two disjoint ordered $k$-element subsets $S,T\subseteq V(D)$, the subsets $S$ and $T$ are unilaterally linked.
\end{Definition}

\begin{theorem}
    The unilateral $k$-linkage problem is NP-complete for every $k\geq 2$.
\end{theorem}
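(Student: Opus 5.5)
Membership in NP is immediate: a certificate is the list of $k$ paths together with their orientations, and disjointness and the endpoint conditions can be checked in polynomial time. For hardness I will reduce from the directed 2-linkage problem, which is NP-complete by Fortune, Hopcroft and Wyllie \cite{fortuneTCS10}. Afterwards I extend to all $k\geq 2$ by padding.

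\textbf{The case $k=2$.} Let $(D,s_1,s_2,t_1,t_2)$ be an instance of the 2-linkage problem. My first plan is to remove all arcs entering $s_1,s_2$ and all arcs leaving $t_1,t_2$; this does not change the answer to the 2-linkage instance. In the resulting digraph $D'$ there is no path from $t_i$ to anything, since $t_i$ is a sink. Hence any $(t_i,s_i)$-path would have to be trivial, which is impossible because $s_i\neq t_i$. So every unilateral linkage of $(s_1,s_2)$, $(t_1,t_2)$ in $D'$ is an ordinary 2-linkage from $(s_1,s_2)$ to $(t_1,t_2)$, and conversely. This gives a polynomial reduction for the instance version of the problem. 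If one wants the sources and sinks to be unrestricted vertices, the reduction is still fine, since the problem instance specifies $S$ and $T$.

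\textbf{General $k\geq 2$.} Given $D'$ as above, I add $k-2$ new isolated pairs: new vertices $s_j,t_j$ for $3\le j\le k$ with a single arc $s_jt_j$. These pairs are linkable only by their own arc and cannot interact with the rest of the digraph. The new instance is unilaterally linked if and only if the 2-instance is, which gives NP-hardness for every $k\geq 2$.

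The main subtle point is the deletion step. I must verify that removing in-arcs at $s_i$ and out-arcs at $t_i$ preserves the existence of a 2-linkage. This holds because an $(s_i,t_i)$-path never uses an arc entering its own start or leaving its own end. It also never passes through the other pair's terminals, since the paths are disjoint and contain those terminals only as endpoints. Hence every 2-linkage survives in $D'$.
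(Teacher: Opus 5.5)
Your proof is correct and uses the same idea as the paper's: make one terminal of every pair a source or a sink, so that each path's orientation is forced and a unilateral linkage must be an ordinary linkage. The differences are cosmetic. The paper attaches new in-degree-zero vertices $s'_i$ with arcs $s'_is_i$ and reduces directly from $k$-linkage for each $k$; you instead delete the out-arcs at the $t_i$ (deleting the in-arcs at the $s_i$ is not needed) and pad with isolated arcs $s_jt_j$, which only requires NP-hardness of $2$-linkage. Unlike the paper, you also spell out membership in NP.
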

\begin{proof}
    Let $(D,s_1,s_2,\ldots{},s_k,t_1,t_2,\ldots{},t_k)$ be an instance of the $k$-linkage problem and construct a new digraph $D'$ by adding new vertices $s'_1,\ldots{},s'_k$ and  arcs $s'_1s_1,s'_2s_2,\ldots{},s'_ks_k$. In the new digraph $D'$ the ordered sets $s'_1,\ldots{},s'_k$ and $t_1,t_2,\ldots{},t_k$ are unilaterally linked if and only if $D$ has a $k$-linkage from $(s_1,s_2,\ldots{},s_k)$ to $(t_1,t_2,\ldots{},t_k)$. 
\end{proof}

For an acyclic digraph $D$  and an acyclic ordering $v_1,v_2,\ldots{},v_n$ of its vertices, the unilateral $k$-linkage problem and the normal $k$-linkage problem are 'equivalent' in the sense that there is precisely one possible direction of the path between two terminals $s_i,t_i$ when we make the natural assumption that  $D$ has a path between $s_i$ and $t_i$ for all $i\in [k]$.

\begin{theorem}[Thomassen~\cite{thomassenC11}]\label{thm:thomassen}
For every positive integer $m$, there exists an $m$-strong digraph $H$
with two distinct vertices $u,v$ such that no directed cycle of $H$
contains both $u$ and $v$.
\end{theorem}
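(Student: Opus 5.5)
This is Thomassen's theorem, quoted from \cite{thomassenC11}; to prove it I would use an inductive construction rather than any earlier result of the paper. The idea is to build, for each $m$, an $m$-strong digraph $H_m$ with two vertices $u,v$ such that every $(u,v)$-path and every $(v,u)$-path must meet a common vertex. Then a cycle through $u$ and $v$, which is the union of a $(u,v)$-path and a $(v,u)$-path that meet only at $u,v$, cannot exist. Equivalently, $H$ has no 2-linkage from $(u,v)$ to $(v,u)$, which is exactly the non-2-linked phenomenon recalled before the statement. Since a tournament-like or high-degree structure alone does not suffice, the construction must force the routing.

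The base case $m=1$ is easy. Take the directed cycle $u\,a\,v\,b\,u$ and add the chord $ab$. Then the only $(v,u)$-path is $vbu$ and the only $(u,v)$-path is $uav$, which is disjoint from it, so this small example fails. Instead I would take a strong digraph in which every $(u,v)$-path and every $(v,u)$-path pass through a common cut vertex $w$, for example two directed cycles $u\,w\,u$ and $v\,w\,v$ glued at $w$. This graph is 1-strong, and every cycle through both $u$ and $v$ would use $w$ twice.

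For the inductive step, given $H_{m}$ with special vertices $u,v$, I would blow up the connectivity while keeping the bottleneck. I would take many disjoint copies of $H_m$ and identify or join them in a grid-like pattern, replacing each vertex by a copy so that local routing is forced. This is Thomassen's trick. Each arc $xy$ is replaced by a gadget in which any path crossing the gadget in one direction blocks every path crossing it in the other direction. Concretely, I would take the key gadget to be a copy of $H_m$ itself, with its $u$ attached to $x$ and its $v$ attached to $y$ and vice versa. Any two disjoint paths using the gadget in opposite directions would give a cycle through $u$ and $v$ in that copy, which is impossible by induction. One then verifies two things: that adding $m+1$ internally disjoint connections between all parts makes the result $(m+1)$-strong, via Menger's theorem and case analysis over where a small separator can lie, and that a cycle through the new $u,v$ would have to traverse some gadget in both directions disjointly.

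The main obstacle is this connectivity verification. Raising connectivity means adding many arcs, and every added arc threatens to create the forbidden cycle. The bookkeeping showing that every separator of size at most $m$ leaves the digraph strong, while all $u$–$v$ routes still go through gadgets, is where the real work lies.
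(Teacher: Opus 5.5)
The paper does not prove this statement. It quotes it from Thomassen's paper and uses it only as a black box in the proof of Theorem~\ref{thm:u2lnknoK}, so in context a citation is all that is needed. Judged as a proof, your proposal has a genuine gap. Your reformulation and base case are fine: gluing the $2$-cycles $uwu$ and $vwv$ at $w$ gives a strong digraph in which every cycle through $u$ is $uwu$. But the inductive step, which is the entire content of the theorem, is never specified. You do not say what the base digraph is, how the copies of $H_m$ are glued, or which arcs are added. You also explicitly defer both required verifications: $(m+1)$-strong connectivity, and the absence of a cycle through the new special vertices.

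Moreover, the one concrete gadget you describe cannot work.
\begin{itemize}
\item A copy of $H_m$ joined to the rest of the digraph only through its special vertices $u$ and $v$ has a nonempty interior, since $u,v$ are nonadjacent in $H_m$ and so every $(u,v)$-path has an internal vertex. Deleting $u$ and $v$ cuts this interior off, so the result is at most $2$-strong, whatever the surrounding structure.
\item The design fails even apart from connectivity. Replace each arc $xy$ of a base digraph $B$ by such a copy, with $u$ joined in both directions to $x$ and $v$ joined in both directions to $y$. Then a cycle through two base vertices $u',v'$ passes through each copy at most once, entering at one special vertex and leaving at the other. Conversely, every cycle through $u',v'$ in the underlying graph of $B$ lifts to a directed cycle, because $H_m$ is strong. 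So the new digraph has no cycle through $u',v'$ only if $u'$ and $v'$ lie in different blocks of the underlying graph of $B$. That forces a single vertex separating $u'$ from $v'$, i.e.\ connectivity at most $1$.
\item Once you add further connections into the interiors to raise the connectivity, paths may enter and leave a copy at vertices other than $u$ and $v$. Your blocking argument (``opposite traversals give a cycle through $u$ and $v$ in that copy'') then no longer applies.
\end{itemize}
What is missing is the heart of Thomassen's result: an explicit construction in which each copy is attached through many vertices. It must come with a proof that the additional arcs can never allow a $(u',v')$-path and a $(v',u')$-path to avoid each other.
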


Using Thomassen's result we can now show that no degree of strong connectivity suffices to guarantee that a digraph is unilaterally 2-linked.
\begin{theorem}\label{thm:u2lnknoK}
For every positive integer $k$, there exists a $k$-strong digraph that is
not unilaterally $2$-linked.
\end{theorem}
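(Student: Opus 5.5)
The plan is to build the required digraph from the digraph $H$ of Theorem~\ref{thm:thomassen}, applied with $m$ somewhat larger than $k$, say $m\ge k+2$. The only property of $H$ I will use is this: there are no two vertex disjoint paths, one a $(u,v)$-path and the other a $(v,u)$-path, that are internally disjoint from $\{u,v\}$, since together they would form a cycle through $u$ and $v$. The statement needs four distinct terminals, whereas this obstruction uses only two vertices, so the first step is to double $u$ and $v$.

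Let $D$ be obtained from $H$ by replacing $u$ with two clones $u_1,u_2$ and $v$ with two clones $v_1,v_2$. Each clone inherits all in- and out-arcs of the original vertex, and we add the arcs $u_1u_2,u_2u_1,v_1v_2,v_2v_1$. Replacing a vertex by a strongly connected pair of clones preserves $m$-strong connectivity, because a separator of $D$ of size less than $m$ either contains both clones of a vertex or misses one of them, and in both cases it yields a separator of $H$ of no larger size. Hence $D$ is $k$-strong. Take the terminals $s_1=u_1,t_1=v_1,s_2=v_2,t_2=u_2$. Suppose the resulting paths $P_1,P_2$ run in opposite directions, one from a clone of $u$ to a clone of $v$ and the other from a clone of $v$ to a clone of $u$. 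Since the paths are disjoint and all four clones are terminals, neither path passes internally through a clone. Contracting the clones back to $u$ and $v$ therefore gives internally disjoint $(u,v)$- and $(v,u)$-paths in $H$, that is, a cycle through $u$ and $v$, which contradicts the choice of $H$.

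The main obstacle is the remaining case, in which both paths go the same way, for instance $u_1\to v_1$ and $u_2\to v_2$. This only requires two internally disjoint $(u,v)$-paths in $H$, and these exist by Menger's theorem. So the cloning must be asymmetric, so that in each pair only one orientation is available to one of the terminals. My first attempt would be to let $u_1$ and $v_2$ be the clones that are genuinely attached to $H$, and to make $u_2$ and $v_1$ the ends of long chains of $m$-strong gadgets. Each gadget would again be a copy of Thomassen's digraph, glued along a vertex that lies on no cycle with its partner. I would try to arrange that every path from $v_1$ into the rest of the digraph, and every path from the rest of the digraph into $u_2$, is forced to use a specific vertex that cannot lie on a common cycle with the corresponding entry vertex.

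The hard point is to keep $k$-strong connectivity while enforcing these directional restrictions. A terminal has at least $k$ in- and out-neighbours, of which at most three are terminals, so the restriction cannot be local and must come from the global cycle structure of Thomassen-type pieces. If that fails, the fallback is to repeat Thomassen's own construction directly. His method iteratively inserts gadgets and raises connectivity while preserving the property that no cycle contains two given vertices. I would run the same iteration with the invariant strengthened to the following: there is no pair of disjoint paths joining $\{s_1,t_1\}$ and $\{s_2,t_2\}$ in any of the four combinations of directions. The base digraph would be a small digraph that is not unilaterally $2$-linked, such as the one from the NP-completeness proof above, in which $s'_i$ has only an out-arc. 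I would then check that each step of the iteration preserves this invariant.
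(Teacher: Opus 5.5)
\textbf{There is a genuine gap.} The only construction you specify completely is the symmetric cloning, and it fails, as you note yourself. With $s_1=u_1$, $t_1=v_1$, $s_2=v_2$, $t_2=u_2$, two internally disjoint $(u,v)$-paths of $H$ become a $(u_1,v_1)$-path and a $(u_2,v_2)$-path, and these form a unilateral linkage. The remaining suggestions are only a plan, not an argument. Neither the asymmetric chains of Thomassen gadgets nor rerunning Thomassen's iteration with a strengthened four-orientation invariant is actually carried out. No gadget is specified, and no forcing property or preservation of $k$-strong connectivity is checked.

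\textbf{The missing idea.} It is local, contrary to your remark that the restriction ``cannot be local.'' You do not need to forbid one orientation of a pair. You only need both orientations of a pair to translate into the same orientation in $H$. To do this, give both terminals of a pair the out-arcs of one vertex and the in-arcs of the other:
\begin{itemize}
\item Take $H$ to be $(k+2)$-strong. Then $u,v$ are nonadjacent, since an arc between them plus a path back would give a cycle through both.
\item Delete $u$ and $v$, and add four independent vertices with $N^+(s_1)=N^+(t_1)=N^+_H(u)$ and $N^-(s_1)=N^-(t_1)=N^-_H(v)$, and with $N^+(s_2)=N^+(t_2)=N^+_H(v)$ and $N^-(s_2)=N^-(t_2)=N^-_H(u)$.
\item Any path joining $s_1$ and $t_1$, in either direction, has its interior in $H-\{u,v\}$. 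It starts with an arc into $N^+_H(u)$ and ends with an arc from $N^-_H(v)$. Replacing its two ends by $u$ and $v$ therefore gives a $(u,v)$-path in $H$.
\item In the same way, any path joining $s_2$ and $t_2$ gives a $(v,u)$-path in $H$. Disjoint linking paths would then yield a cycle through $u$ and $v$, a contradiction.
\item Connectivity is easy. $H-\{u,v\}$ is $k$-strong, and each new vertex has at least $k+2$ in- and out-neighbours in it, so the new digraph is $k$-strong.
\end{itemize}
Your cloning instinct was right. The fix is to put the out-arcs of one vertex and the in-arcs of the other on the same terminal, rather than making each terminal a full copy of $u$ or $v$.
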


\begin{proof}
By Theorem~\ref{thm:thomassen}, there is a $(k+2)$-strong digraph $H$
with distinct vertices $u,v$ that lie on no common directed cycle.
Observe that $u$ and $v$ are nonadjacent. Indeed, if $uv\in A(H)$,
then a directed $(v,u)$-path together with $uv$ would give such a cycle.
The case $vu\in A(H)$ is analogous.

Let $R=H-\{u,v\}$. We construct a digraph $D$ by adding four new
vertices $s_1,t_1,s_2,t_2$ to $R$. We retain all arcs of $R$, make
$\{s_1,t_1,s_2,t_2\}$ an independent set, and specify all arcs incident
with the new vertices by
\begin{align}
N_D^+(s_1)=N_D^+(t_1)=N_H^+(u),\;
N_D^-(s_1)=N_D^-(t_1)=N_H^-(v),\label{eq:first}\\
N_D^+(s_2)=N_D^+(t_2)=N_H^+(v),\;
N_D^-(s_2)=N_D^-(t_2)=N_H^-(u).\label{eq:second}
\end{align}
Since $u$ and $v$ are nonadjacent, all four neighbourhoods of $u$ and $v$ on the right-hand sides are subsets of $V(R)$. This defines $D$ completely, and $|V(D)|=|V(H)|+2$. 

Observe that $R$ is $k$-strong and since all vertices in the set $\{s_1,s_2,t_1,t_2\}$ have at least $k+2$ in- and out-neighbours in $V(R)$, it follows from Proposition 5.8.5 in \cite{bang2009} that $D$ is $k$-strong.

We claim that the ordered sets $(s_1,s_2)$ and $(t_1,t_2)$ are not unilaterally 2-linked. 
Suppose, to the contrary, that $P_1,P_2$ are vertex-disjoint directed
paths whose endpoint sets are $\{s_1,t_1\}$ and $\{s_2,t_2\}$,
respectively. Their directions are arbitrary. Since all four new
vertices occur as endpoints of these paths, every internal vertex of
either path belongs to $R$. Moreover, each path has an internal vertex,
because its endpoints are nonadjacent.

Write $P_1=p_0p_1\cdots p_\ell$ in its directed order, so that
$\{p_0,p_\ell\}=\{s_1,t_1\}$. By~\eqref{eq:first}, we have $up_1\in A(H)$ and $p_{\ell-1}v\in A(H)$.
Thus $u\,p_1\cdots p_{\ell-1}\,v$
is a directed $(u,v)$-path in $H$, regardless of the direction in
which $P_1$ joins $s_1$ and $t_1$. Similarly, by~\eqref{eq:second},
replacing the initial and terminal vertices of $P_2$ by $v$ and $u$,
respectively, gives a directed $(v,u)$-path in $H$.

The two resulting paths have disjoint sets of internal vertices,
because $P_1$ and $P_2$ are vertex-disjoint. Their union is therefore
a directed cycle containing both $u$ and $v$, contrary to the choice
of $H$. Hence $D$ is not unilaterally $2$-linked.
\end{proof}

Because of Theorem \ref{thm:u2lnknoK} it is interesting to find classes of digraphs for which there is some degree of connectivity that guarantees unilateral 2-linkages for every choice of terminals.

Note that every semicomplete digraph on at least $2k$ vertices is unilaterally $k$-linked so the unilateral $k$-linkage problem is trivial  for semicomplete digraphs. This is by no means the case for the classes of semicomplete multipartite digraphs, split digraphs and co-bipartite digraphs. A digraph $D=(V,A)$ is {\bf semicomplete multipartite} if its underlying undirected graph is a complete multipartite graph. A digraph $D$ is a {\bf split digraph} if we can partition $V$ into two sets $V_1,V_2$ so that no arc of $D$ is inside $V_1$ and $V_2$ induces a semicomplete subdigraph of $D$.  We use the notation $D=(V_1,V_2,A)$ for a split digraph, where $V_1$ is the independent set. A split digraph $D=(V_1,V_2,A)$ is semicomplete if every vertex of $V_1$ is adjacent to every vertex of $V_2$. Semicomplete split digraphs form a subclass of the semicomplete multipartite digraphs (only one partite set may have  size more than one). Finally, a digraph $D=(V,A)$ is {\bf co-bipartite} if it has a vertex-partition $V=V_1\cup V_2$ so that each of the induced subdigraphs $D[V_1], D[V_2]$ are semicomplete.

In this paper we establish exact connectivity bounds for unilateral 2-linkages  in the case of split digraphs and semicomplete multipartite digraphs, as well as an almost tight bound for co-bipartite digraphs. By combining several of our intermediate results for some of the classes above, we  also prove that every 16-strong digraph of independence number 2 is unilaterally 2-linked. 
Problems on split digraphs are often much harder than the corresponding problem for semicomplete digraphs: for instance, by Camion's theorem \cite{camionCRASP249}, every strongly connected semicomplete digraph has a hamiltonian cycle, whereas the hamiltonian cycle problem is NP-complete for general split digraphs \cite{bangJGT108}. The hamiltonian cycle problem is polynomial for semicomplete split digraphs. The latter follows from \cite{bangJGT29} and the fact that semicomplete split digraphs are also semicomplete multipartite. As another example, the  2-linkage problem admits a polynomial-time algorithm for semicomplete multipartite digraphs (and hence also for semicomplete split digraphs) by (C), while its complexity remains open for general split digraphs. %

\begin{proposition}\label{prop:samecomplexity}
For split digraphs and fixed $k$ the $k$-linkage problem is polynomially equivalent to the 
    unilateral $k$-linkage problem.
\end{proposition}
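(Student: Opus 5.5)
The plan is to give polynomial reductions in both directions and to check that each one stays inside the class of split digraphs.

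\emph{From the unilateral $k$-linkage problem to the $k$-linkage problem.} Let $(D,s_1,\ldots,s_k,t_1,\ldots,t_k)$ be an instance with $D$ a split digraph. For each of the $2^k$ vectors $\sigma\in\{0,1\}^k$, form the ordered pair of terminal sets obtained by swapping $s_i$ and $t_i$ exactly when $\sigma_i=1$. Then $S,T$ are unilaterally linked in $D$ if and only if $D$ has a $k$-linkage for at least one of these $2^k$ ordered pairs. Since $k$ is fixed, this is a constant number of calls to a $k$-linkage oracle on the same split digraph $D$.

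\emph{From the $k$-linkage problem to the unilateral $k$-linkage problem.} The reduction used for the NP-completeness result above adds pendant vertices $s'_i$ with arcs $s'_is_i$. This can destroy the split structure: if $s_i\in V_1$, the new vertex cannot be placed in $V_1$ because of the arc $s'_is_i$, and placing it in $V_2$ would require it to be adjacent to all of $V_2$. This is the only real obstacle, and I would avoid it by deleting arcs instead of adding vertices.

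Given an instance $(D,s_1,\ldots,s_k,t_1,\ldots,t_k)$ with $D=(V_1,V_2,A)$ split, let $D'$ be obtained from $D$ by deleting every arc whose head lies in $\{s_1,\ldots,s_k\}$. The digraph $D'$ is still split with the same partition $(V_1,V_2)$, because deleting arcs cannot create arcs inside $V_1$. Deleting arcs can make $D'[V_2]$ fail to be semicomplete, so I would choose the following option.
\begin{itemize}
\item If $s_i\in V_2$, keep the arcs of $D[V_2]$ between $s_i$ and other vertices of $V_2$ and instead use the pendant construction: add a new vertex $s'_i$ to $V_1$ with the single arc $s'_is_i$. This is legal because $s_i\in V_2$.
\item If $s_i\in V_1$, delete all arcs entering $s_i$. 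Since $s_i\in V_1$, none of the deleted arcs lies inside $V_2$, so $D'[V_2]$ remains semicomplete.
\end{itemize}
In both cases the terminal that now plays the role of $s_i$ (namely $s'_i$ or $s_i$) has in-degree $0$ in $D'$.

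\emph{Correctness.} Suppose $D'$ has a unilateral linkage for the new terminals. Each path $P_i$ contains its $s$-terminal, which has in-degree $0$, so that terminal must be the initial vertex of $P_i$. Hence $P_i$ is oriented from $s_i$ (or $s'_i$) to $t_i$. Removing $s'_i$ where it was added gives a $k$-linkage in $D$ from $(s_1,\ldots,s_k)$ to $(t_1,\ldots,t_k)$, because every arc used is an arc of $D$.

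Conversely, suppose $D$ has a $k$-linkage $P_1,\ldots,P_k$. Each $s_j$ occurs only as the initial vertex of $P_j$, since the paths are vertex disjoint and every terminal lies on one of them. So no path uses an arc entering some $s_j$, and all the paths survive in $D'$. Prepending $s'_i$ where it was added yields a unilateral linkage in $D'$.

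Both constructions take polynomial time, which proves Proposition~\ref{prop:samecomplexity}.
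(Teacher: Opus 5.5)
Your proof is correct, and the first direction (trying all $2^k$ orientations with a $k$-linkage oracle) is exactly the paper's. The converse reduction also rests on the same principle as the paper's, namely making each terminal $s_i$ a source so that every path is forced to run from $s_i$ to $t_i$. Your gadget, however, is different and simpler.

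The paper only ever adds vertices, and it does so at both ends. For $s_i\in V_2$ it uses the same pendant $s'_i\in V_1$ with arc $s'_is_i$ that you use. For $s_i\in V_1$ it runs into the obstacle you identify and resolves it with a two-vertex gadget $s'_i\to s''_i\to s_i$. Here $s''_i$ is placed in $V_2$ and made adjacent to all of $V_2$, with arcs from $V_2$ into $s''_i$ and extra arcs among the various $s''_r,t''_j$. Symmetric gadgets $t_j\to t''_j\to t'_j$ are added on the sink side.

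You make two observations instead. First, forcing the orientation at one end suffices, since an in-degree-$0$ terminal must be the initial vertex of its path. This also shows that the paper's $t$-side gadgets are redundant, just as its own NP-completeness reduction only adds the $s'_i$. Second, for $s_i\in V_1$ the in-degree can be made $0$ by deleting the arcs entering $s_i$. All of these arcs come from $V_2$, so $D[V_2]$ is untouched. None of them is used by a genuine linkage, because $s_i$ is the first vertex of $P_i$ and lies on no other path.

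Your route buys a shorter construction with at most $k$ new vertices and no need to maintain semicompleteness among newly added $V_2$-vertices. The paper's purely additive construction keeps $D$ as an induced subdigraph of $D'$, but the equivalence does not need that.

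One presentational fix: drop your opening sentence that defines $D'$ by deleting all arcs into $\{s_1,\ldots,s_k\}$ and asserts it is split with the same partition. You retract it immediately. State the final construction once, with partition $(V_1\cup\{s'_i: s_i\in V_2\},\,V_2)$. Also say explicitly that the successor of $s'_i$ on $P_i$ is $s_i$, since $s_i$ is its only out-neighbour.
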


\begin{proof}

If we have an algorithm for the normal $k$-linkage problem, we can just try the $2^k$ possibilities for a unilateral linkage between $\{s_1,s_2,\ldots{},s_k\}$ and $\{t_1,t_2,\ldots{},t_k\}$. %
We prove the other direction for the case $k=2$. The construction can easily be modified to work for general $k$.
Suppose that we have an algorithm for the unilateral 2-linkage problem. Then we can check the existence of the 2-linkage from $(s_1,s_2)$ to $(t_1,t_2)$ for a split digraph $D=(V_1,V_2,A)$ as follows:\\

We first construct a new split digraph $D'$ by adding some new vertices. Here we first consider $i=1$ and then $i=2$. 

\begin{itemize}

\item If $s_i$ is in $V_2$ we add a new vertex $s'_i$ and the arc $s'_is_i$ and add $s'_i$ to $V_1$.
\item If $s_i$ is in $V_1$, we add two new vertices $s'_i, s''_i$, the arcs $s'_is''_i, s''_is_i$ and all possible arcs from vertices in $V_2$ to $s''_i$. Add $s'_i$ to $V_1$ and $s''_i$ to $V_2$.
\item If $t_j$ is in $V_2$ we add a new vertex $t'_j$ and the arc $t_jt'_j$ and add $t'_j$ to $V_1$.
\item If $t_j$ is in $V_1$ we add two new vertices $t'_j,t''_j$, the arcs $t_jt''_j,t''_jt'_j$ and all possible arcs from $t''_j$ to $V_2$. Add $t'_j$ to $V_1$ and $t''_j$ to $V_2$.

\end{itemize}

Note that if both $s''_1$ and $s''_2$ were created above then we added the arc $s''_1s''_2$.
Similarly, if both $t''_1$ and $t''_2$ were created above, then we add the arc $t''_2t''_1$. Finally if some $t''_j$ and $s''_r$ was created, then we added the arc $t''_js''_r$.

Now  it is easy to check that the new digraph $D'$ is a split digraph and it has a pair of disjoint paths between $s'_1$ and $t'_1$ respectively between $s'_2$ and $t'_2$ if and only if the original split digraph $D$ has disjoint $(s_1,t_1)-$, $(s_2,t_2)$-paths.

The construction above also works for larger $k$. We just process the vertices $s_1,s_2,\ldots{},s_k$ in that order and then the vertices $t_1,t_2,\ldots{},t_k$ in the same order.
\end{proof}

\begin{conjecture}\label{conj:2linksplitpol}
    The unilateral 2-linkage problem (and hence also the  2-linkage problem) can be solved in polynomial time for split digraphs.
\end{conjecture}

The main results of this paper are as follows. We show by constructions that the first bounds are best possible and the third one is within one of optimal.

\begin{restatable}{theorem}{ThmUnilateralsplit}
\label{thm:mainsplit}
Every 4-strong split digraph \(D=(V_1,V_2;A)\) is {unilaterally} 2-linked. 
\end{restatable}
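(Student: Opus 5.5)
We plan a case analysis on how many of the four terminals $s_1,t_1,s_2,t_2$ lie in the independent set $V_1$ and how many in the semicomplete part $V_2$. The basic tool is the following observation. Any vertex $x\in V_1$ has all its in- and out-neighbours in $V_2$, and $|N^+(x)|,|N^-(x)|\ge 4$ by 4-strong connectivity. Moreover, for a vertex $x\in V_1$ and a set $X$ with $|X|\le 3$, $D-X$ is still strong. Hence we can always route through $V_2$: a path between two terminals can be taken of the form $x\to a\to\cdots\to b\to y$, where the middle part is a path in $D[V_2]$, possibly detouring through $V_1$. In the semicomplete digraph $D[V_2]$, any two vertices $a,b$ are adjacent, so some one of $ab$ or $ba$ exists. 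This is exactly what makes the unilateral version easy there, since we may choose the direction.

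\textbf{Case 1: all four terminals lie in $V_2$.} Each pair $\{s_i,t_i\}$ is adjacent, and the two arcs form the required disjoint paths.

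\textbf{Case 2: exactly one terminal lies in $V_1$.} Say $s_1\in V_1$. Then $\{s_2,t_2\}$ are joined by an arc. We need a path between $s_1$ and $t_1$ avoiding $s_2,t_2$. Since $s_1$ has at least $4$ out-neighbours, it has an out-neighbour $a\in V_2\setminus\{s_2,t_2,t_1\}$. If $at_1$ is an arc we are done. Otherwise $t_1a$ is an arc, and we instead use an in-neighbour $b\neq s_2,t_2,t_1$ of $s_1$: either $t_1 b s_1$ works, or $b t_1$ is an arc. In that last situation we have $s_1\to a$, $t_1\to a$, $b\to s_1$, $b\to t_1$, and we have to be smarter. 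Here we use that $D-\{s_2,t_2\}$ is 2-strong, so Menger gives an $(s_1,t_1)$-path avoiding $\{s_2,t_2\}$ directly.

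This Menger argument handles every case where the second pair is joined by an arc. In general: if one pair, say $\{s_2,t_2\}$, is joined by an arc, then $D-\{s_2,t_2\}$ is 2-strong, so a $(s_1,t_1)$-path exists in it. So the remaining difficulty is when both pairs are non-adjacent, i.e.\ each pair has at least one terminal in $V_1$.

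\textbf{Case 3: each pair has a terminal in $V_1$.} This is the main case, with subcases according to whether two or three/four terminals lie in $V_1$. Here the plan is as follows.

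First, choose a short path $P_2$ between $s_2$ and $t_2$ using as few vertices of $V_2$ as possible. If $s_2\in V_1$ and $t_2\in V_2$, take $s_2 a t_2$ or $t_2 a s_2$, where $a$ is an out- or in-neighbour of $s_2$ compared with $t_2$. If both $s_2,t_2\in V_1$, take $s_2 a b t_2$ with $a\in N^+(s_2)$ and $b\in N^-(t_2)$, in either orientation; when $N^+(s_2)\cap N^-(t_2)$ or the reverse is nonempty, use $s_2 a t_2$. Such paths use at most two vertices of $V_2$ besides the terminals.

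Second, try to find the $(s_1,t_1)$-path in $D-V(P_2)$. Deleting at most $3$ non-terminal-of-pair-1 vertices (the internal vertices of $P_2$ together with one terminal, taking care to count) leaves $D$ strong when $|V(P_2)|\le 3$, so we succeed whenever $P_2$ has at most one internal vertex. The obstacle is when both pairs have both terminals in $V_1$ and no common neighbour in the right direction, forcing paths with two internal vertices. Then $|V(P_2)|=4$, and deleting it may destroy strong connectivity.

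To handle this hardest subcase we will exploit the freedom of choice. There are many choices of $a\in N^+(s_2)$ and $b\in N^-(t_2)$, at least $4$ each, plus the reversed orientation with $N^+(t_2)$ and $N^-(s_2)$. Suppose every choice separates $s_1$ from $t_1$ in both directions. Then we derive, via a counting / Menger argument on the at least 4 disjoint $(s_1,t_1)$-paths and 4 disjoint $(t_1,s_1)$-paths, that these paths have length 3 with middle parts in $V_2$. Together with the semicompleteness of $V_2$, where arcs between middle vertices of the pair-1 and pair-2 paths can be rerouted, this lets us swap endpoints and build the linkage. We expect this final subcase, with all four terminals in $V_1$, to be where the bound 4 is tight and the most delicate exchange argument is needed.
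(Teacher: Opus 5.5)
There is a genuine gap: the main case, where neither pair is joined by an arc, is never actually proved.

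\textbf{What is fine.} If some pair is joined by an arc (in particular if it lies in $V_2$), deleting it leaves a $2$-strong digraph in which the other pair is connected. Likewise, if one pair has a connecting path in either direction with at most one internal vertex avoiding the other pair, deleting those three vertices leaves a strong digraph.

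\textbf{First error: the hard configuration is misidentified.} For $s_2\in V_1$, $t_2\in V_2$ you assert that $s_2at_2$ or $t_2as_2$ is always available. This is false. Suppose $t_2\to a$ for every $a\in N^+(s_2)$ and $b\to t_2$ for every $b\in N^-(s_2)$ (apart from the other terminals). Then neither path exists, and $4$-strong connectivity does not forbid this. It is exactly what happens to the pair $\{s_1,t_1\}$ in the paper's $3$-strong sharpness construction $D_n$: $N^+(s_1)\setminus\{t_2\}=\{v_1,v_2\}$ is dominated by $t_1$, and $N^-(s_1)\setminus\{t_2\}=\{v_3,v_4\}$ dominates $t_1$. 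So the difficulty is not confined to ``all four terminals in $V_1$''. The mixed configuration $s_1,s_2\in V_1$, $t_1,t_2\in V_2$ is where the bound $4$ is tight, and it is the hardest case in the paper. Similarly, for $s_2,t_2\in V_1$ nothing guarantees a path $s_2abt_2$ or $t_2abs_2$, so you cannot assume $|V(P_2)|\le 4$.

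\textbf{Main gap: no argument in the hard case.} When no pair has a short connecting path in either direction, you give no argument. Fixing $P_2$ first and then searching in $D-V(P_2)$ cannot work once $|V(P_2)|\ge 4$. Your claim that the Menger paths must have length $3$ with both internal vertices in $V_2$ is unsupported and false in general. A minimal path in a split digraph can alternate between $V_2$ and $V_1$ and be long, since its $V_2$-vertices need only send arcs backwards to one another. ``Swap endpoints'' is not a proof step.

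\textbf{What the paper does instead.} It works with both Menger families simultaneously, in two steps.
\begin{enumerate}
\item[(a)] Using $4$-strong connectivity, minimal paths, and the freedom to use a $(t_i,s_i)$-path, it shows that, short of reaching a contradiction directly, each pair has three internally disjoint paths avoiding the other pair. In the mixed case this is done after relabelling so that $s_1,s_2\in V_1$.
\item[(b)] It proves local linkage lemmas for split digraphs under this $(3,3)$ condition: one when a pair lies inside $V_1$, and a considerably more delicate one for $s_1,s_2\in V_1$, $t_1,t_2\in V_2$. These proofs look at the first and last vertices where a $Q_j$ meets $\bigcup_i V(P_i)$. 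They then reroute through arcs between $V_2$-vertices, such as the successors $s_1^j$ and predecessors $t_1^i$ of the terminals.
\end{enumerate}
Without an argument of this strength, your Case~3 remains unproved.
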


\begin{restatable}{theorem}{ThmUnilateralsmd}\label{the3.6}
Every $5$-strong semicomplete multipartite digraph is unilaterally
$2$-linked.
\end{restatable}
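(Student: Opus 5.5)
We plan to prove Theorem~\ref{the3.6} by a case analysis on how the four terminals $s_1,t_1,s_2,t_2$ are distributed among the partite sets of the $5$-strong semicomplete multipartite digraph $D$. The basic principle is that two vertices in different partite sets are adjacent, so a pair $\{s_i,t_i\}$ split across partite sets can be joined by a single arc in the direction in which it happens to be oriented. The whole difficulty therefore lies in pairs whose two terminals belong to the same partite set.

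\textbf{Case 1: both pairs are split.} Here $s_1,t_1$ lie in different partite sets, and likewise $s_2,t_2$. Then $P_1$ and $P_2$ can each be taken to be the single arc between the two terminals, and these arcs are trivially vertex-disjoint.

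\textbf{Case 2: exactly one pair lies inside a partite set.} Say $s_1,t_1\in X$ for some partite set $X$, and $s_2t_2$ (or $t_2s_2$) is an arc. We take $P_2$ to be that arc and need an $(s_1,t_1)$- or $(t_1,s_1)$-path in $D-\{s_2,t_2\}$. Since $D-\{s_2,t_2\}$ is $3$-strong, it contains a path from $s_1$ to $t_1$, and this path is disjoint from $P_2$. So this case is easy.

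\textbf{Case 3: both pairs lie inside partite sets.} Here $s_1,t_1\in X$ and $s_2,t_2\in Y$, where possibly $X=Y$. This is the main case, and we expect it to be the main obstacle. We first aim for short paths. We look for a vertex $z_1\notin X\cup\{s_2,t_2\}$ forming a $2$-path $s_1z_1t_1$ or $t_1z_1s_1$; this requires $z_1$ to be an out-neighbour of one of $s_1,t_1$ and an in-neighbour of the other. We similarly look for $z_2$ for the second pair, with $z_1\ne z_2$. Every vertex outside $X$ is adjacent to both $s_1$ and $t_1$, so a vertex $z\notin X$ fails to serve as $z_1$ only when both arcs go out of $z$, or both arcs go into $z$. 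Let $B_1$ be the set of such bad vertices for the first pair. We will use $5$-strong connectivity, applied to $D-\{s_2,t_2,\dots\}$ via Menger-type arguments, to show the following. If too few good middle vertices exist, then $N^+(s_1)\cup N^+(t_1)$ and the corresponding in-neighbourhoods have a structure that permits a longer path $s_1 a b t_1$ (or its reverse), which we find using the semicompleteness between partite sets. Concretely, if every vertex of $V\setminus X$ is either a common out-neighbour or a common in-neighbour of $\{s_1,t_1\}$, then there is an arc from a common out-neighbour to a common in-neighbour in different partite sets, or the path goes through $X$. We then have a $3$-arc path, and we must choose it to avoid the second pair's path.

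The key technical step is a counting or Menger argument: in $D-\{s_2,t_2\}$, which is $3$-strong, there exist three internally disjoint $(s_1,t_1)$-paths. Among these we choose a shortest one, and then check whether the second pair can be routed in $D$ minus that path. If it cannot, the minimality of the path and the multipartite adjacency force the removed path to be short (at most three internal vertices). Then $D$ minus the path is still $2$-strong when the path has at most three internal vertices, which yields a path between $s_2$ and $t_2$ in some direction. The case where the chosen path for pair $1$ is longer, and where its interior separates $s_2$ from $t_2$ in both directions, is where we expect the real work. In that case we will try to reroute using shortcuts given by adjacencies of path vertices to $s_2,t_2$, and we will use the freedom to reverse the direction of either path. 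The tight construction, a $4$-strong counterexample, suggests that exactly one configuration with $X=Y$ and bad sets of size four is extremal. So we will treat the subcase $X=Y$ separately, using the fact that all four terminals then have identical adjacency to the rest of $D$.
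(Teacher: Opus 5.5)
There is a genuine gap. Your Cases 1 and 2 are fine, but they are the trivial part: the whole theorem lives in Case 3, and that case is not proved.

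\textbf{The step that fails.} You claim that once the chosen path has at most three internal vertices, ``$D$ minus the path is still $2$-strong''. A path with three internal vertices has five vertices. Removing five vertices from a $5$-strong digraph need not leave it strong, so nothing yields an $(s_2,t_2)$- or $(t_2,s_2)$-path avoiding it.

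Connectivity alone only disposes of paths with at most \emph{two} internal vertices. Such a path misses one of three internally disjoint $(s_2,t_2)$-paths in the $3$-strong digraph $D-\{s_1,t_1\}$. Once those short paths are excluded, your own observation applies: every vertex outside $X\cup\{s_2,t_2\}$ is a common out- or common in-neighbour of $\{s_1,t_1\}$. Hence every shortest $(s_1,t_1)$-path in $D-\{s_2,t_2\}$ is $s_1xmyt_1$ with $m\in X$, i.e.\ it has \emph{exactly} three internal vertices.

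So the residual case is precisely the one your counting cannot reach; the $4$-strong example shows this configuration really occurs at lower connectivity. ``Try to reroute using shortcuts'' is not an argument for it. Two further corrections:
\begin{itemize}
\item In that example the two pairs lie in \emph{different} partite sets, so it does not single out $X=Y$.
\item Terminals in one partite set share neighbours but not orientations, so that fact gives no leverage.
\end{itemize}

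\textbf{What is missing.} You need an analysis of how three such paths can all be blocked.
\begin{itemize}
\item Take three internally disjoint $(s_1,t_1)$-paths in $D-\{s_2,t_2\}$ and three internally disjoint $(s_2,t_2)$-paths in $D-\{s_1,t_1\}$, each family of minimum total length.
\item Shortcut arguments show $P_i=s_1x_im_iy_it_1$ with $m_i\in X$, $x_i$ a common out-neighbour and $y_i$ a common in-neighbour. Likewise each $Q_j$ has three internal vertices, the middle one in $Y$.
\item If no linkage exists, each $Q_j$ meets each $P_i$, hence in exactly one vertex. So the interiors of the $Q_j$ are exactly the nine vertices $x_i,m_i,y_i$.
\item No $Q_j$ can use an arc $x_im_h$ or $m_hy_i$. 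Otherwise $s_1x_im_hy_ht_1$ or $s_1x_hm_hy_it_1$ is an $(s_1,t_1)$-path whose interior meets at most two of the $Q_j$.
\item A short type count on the $Q_j$ then forces $X=Y$, so each $Q_j$ is $s_2y_im_hx_kt_2$.
\item Finally, the orientation of the arc between a suitable $x_k$ and $m_h$ gives either an $(s_1,t_1)$-path missing some $Q_j$ or an $(s_2,t_2)$-path missing some $P_i$.
\end{itemize}
An argument of this kind, pinning down the exact shape of the minimal paths and their intersection pattern, is what Case 3 requires, and your proposal does not supply it.
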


\begin{restatable}{theorem}{Thmunilateralcobip}\label{thm:6-strongcobip}
Every $6$-strong co-bipartite digraph is unilaterally
$2$-linked.
\end{restatable}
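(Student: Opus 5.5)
We plan to prove Theorem~\ref{thm:6-strongcobip} by a short case analysis on where the four terminals lie in a fixed partition $V=V_1\cup V_2$ with $D[V_1],D[V_2]$ semicomplete. The tools are Menger's theorem and the fact that inside a semicomplete digraph any two distinct vertices are joined by an arc in at least one direction. The guiding observation is that a pair $\{s_i,t_i\}$ with both ends in the same $V_j$ is handled at once by the arc between them, a path of length one. So the real work concerns pairs that are split between $V_1$ and $V_2$.

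\textbf{Case 1: both pairs lie inside a part.} Suppose each of $\{s_1,t_1\}$ and $\{s_2,t_2\}$ lies inside a single $V_j$ (possibly the same $j$). Then the two arcs between the terminals of each pair form the unilateral linkage, and there is nothing more to do.

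\textbf{Case 2: exactly one pair is split.} Say $s_1,t_1\in V_j$ and $s_2\in V_1$, $t_2\in V_2$. We use the arc between $s_1$ and $t_1$ for $P_1$ and delete $\{s_1,t_1\}$. The remaining digraph is $4$-strong, so it contains both an $(s_2,t_2)$-path and a $(t_2,s_2)$-path. Either one serves as $P_2$.

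\textbf{Case 3: both pairs are split.} Say $s_1,s_2\in V_1$ and $t_1,t_2\in V_2$, up to relabelling; the other placements are handled in the same way. This is the main obstacle. Our plan is as follows.
\begin{enumerate}
\item Consider $D'=D-\{s_2,t_2\}$, which is $4$-strong. Take four internally disjoint $(s_1,t_1)$-paths in $D'$, and likewise four $(t_1,s_1)$-paths.
\item We may assume these paths are minimal. In a co-bipartite digraph a minimal path from $V_1$ to $V_2$ has the form: at most one vertex step inside $V_1$, then one crossing arc, then at most one step inside $V_2$. Otherwise a chord inside a semicomplete part would shortcut it. So each minimal path has at most $4$ vertices, with at most two interior vertices.
\item Choose an $(s_1,t_1)$-path $P_1$, or a $(t_1,s_1)$-path, with few vertices. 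The aim is that $D-V(P_1)$ still has a path between $s_2$ and $t_2$ in some direction. Since $D$ is $6$-strong, deleting the at most two interior vertices of $P_1$ together with $s_1$ and $t_1$ removes at most $4$ vertices. The remaining digraph is therefore still $2$-strong, which gives the needed path between $s_2$ and $t_2$.
\end{enumerate}

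The key counting step is therefore verifying that a minimal path between $s_1$ and $t_1$ avoiding $s_2,t_2$ has at most two interior vertices. In particular we must exclude the possibility that the vertices $s_2,t_2$ themselves are forced onto the shortcut. For this we would use the semicomplete arcs inside $V_1$ and $V_2$: if a shortcut vertex is $s_2$, we pick another of the four disjoint paths from Step~1.

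The $6$ in the hypothesis is exactly what this count consumes: $4$ deleted vertices plus $2$ to keep the remainder $2$-strong. If the interior bound in the key counting step turns out to fail in some subcase, we would instead choose $P_1$ and $P_2$ simultaneously. Both would be taken among the short crossing paths, and we would apply a pigeonhole argument over the $6$ disjoint paths from $V_1$ to $V_2$ given by Menger's theorem.
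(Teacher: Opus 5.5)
Your Cases 1 and 2 are correct, but Case 3 rests on a false claim. Minimality of a path only forbids \emph{forward} chords. In a semicomplete part, the arc between two nonconsecutive vertices of the path may point backward, so a minimal path can spend arbitrarily many steps inside $V_1$ or $V_2$.

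Worse, the conclusion you actually need fails in arbitrarily highly connected co-bipartite digraphs. That conclusion is that some path between $s_1$ and $t_1$, in either direction, has at most two interior vertices. Here is a counterexample.
\begin{itemize}
\item Take $V_1=A_0\cup\cdots\cup A_L$ and $V_2=B_0\cup\cdots\cup B_L$ with $L\ge 7$ and $|A_x|=|B_x|=k$ for all $x$.
\item Inside each part, put all arcs (both directions) within a level and between consecutive levels. Add an arc from level $y$ to level $x$ whenever $y\ge x+2$.
\item Add the cross arcs $A_x\to B_y$ for $y\le x+1$ and $B_y\to A_x$ for $x\le y-7$.
\end{itemize}
Both parts are semicomplete. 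Deleting fewer than $k$ vertices leaves every level nonempty, so the digraph stays strong; hence it is $k$-strong.

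Now take $s_1\in A_0$ and $t_1\in B_4$. Every arc raises the level index by at most one, so every $(s_1,t_1)$-path has at least three interior vertices. A $(t_1,s_1)$-path can enter $V_1$ only from some $B_y$ with $y\ge 7$, so it must first climb through $B_5,B_6,B_7$; it too has at least three interior vertices. Deleting $s_2,t_2$ only makes this worse. So Step 3 cannot be carried out, and your fallback (``short crossing paths'' plus a pigeonhole over Menger paths) has nothing short to work with and is not an argument. The count ``4 deleted vertices plus 2'' therefore does not explain the hypothesis $6$.

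What is missing is an argument that works when every path between $s_1$ and $t_1$ is long. The paper argues by contradiction in four steps.
\begin{enumerate}
\item A $(t_1,s_1)$-path $t_1xys_1$ would already give a linkage. Using this, it shows that the minimal $(s_1,t_1)$-paths have the form $A^*B^*$ and the minimal $(t_2,s_2)$-paths the form $B^*A^*$. The segments $A^*$ and $B^*$ may still be long.
\item It assigns each path a type $+$, $-$ or $0$ according to its adjacencies with the other pair's terminals.
\item It analyses the $4\times 4$ matrix of intersection vertices. This reduces, after possibly reversing all arcs, to the case where all paths have type $+$ or $0$.
\item It compresses the $B$-segments into arcs. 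This yields a split digraph to which the local linkage theorem for split digraphs (three and four internally disjoint paths) applies, and the resulting paths lift back to $D$.
\end{enumerate}
Some structural reduction of this kind is needed in place of your Step 2.
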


\begin{restatable}{theorem}{Thmunilateralalpha}\label{thm:alpha-two-unilateral}
    Every $16$-strong digraph $D$ with $\alpha(D)\leq2$ is unilaterally
$2$-linked.
\end{restatable}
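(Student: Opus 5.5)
The statement to prove is the last one listed: every $16$-strong digraph $D$ with $\alpha(D)\le 2$ is unilaterally $2$-linked. The plan is to reduce to the co-bipartite and split results, Theorem~\ref{thm:6-strongcobip} and Theorem~\ref{thm:mainsplit}. The margin $16$ is meant to let us delete a few vertices and still keep enough connectivity to apply those theorems inside a large semicomplete-like substructure.

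Fix distinct terminals $s_1,s_2,t_1,t_2$. The basic tool is the following observation. Suppose $s_i$ and $t_i$ are nonadjacent and some vertex $x\notin\{s_1,s_2,t_1,t_2\}$ is nonadjacent to both. Then $\{s_i,t_i,x\}$ is independent, which contradicts $\alpha(D)\le 2$. Hence every other vertex is adjacent to $s_i$ or to $t_i$. Thus the pair $\{s_i,t_i\}$ dominates, in the underlying graph, all vertices outside the terminal set.

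First I handle the easy cases. If $s_i$ and $t_i$ are adjacent for both $i$, the two arcs already form a unilateral linkage. If only $s_1t_1$ (or $t_1s_1$) is an arc, then $D-\{s_1,t_1\}$ is $14$-strong, so it contains an $(s_2,t_2)$-path. It remains to treat $s_i,t_i$ nonadjacent for both $i$.

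In that case let $N_i$ be the set of nonterminal vertices adjacent to $s_i$ but not to $t_i$, let $M_i$ be those adjacent to $t_i$ but not $s_i$, and let $C_i$ be those adjacent to both. Any two vertices of $N_i$ are adjacent, since otherwise together with $t_i$ they form an independent triple; the same holds for $M_i$. So $N_i$ and $M_i$ induce semicomplete digraphs.

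The goal is a short path between $s_1$ and $t_1$ of length at most $3$. A common neighbour $c\in C_1$ gives a length-$2$ unilateral path if the arcs are oriented consistently, i.e.\ $s_1\to c\to t_1$ or $t_1\to c\to s_1$. Otherwise I use an arc between $N_1$ and $M_1$ with suitable orientations. Using $16$-strong connectivity, degrees are at least $16$, so these sets are large. A counting argument over orientations should force such a short path $P_1$ avoiding $s_2,t_2$; if it fails, every common neighbour is a "sink" or "source" pattern, and I would exploit Menger paths from $s_1$ to $t_1$ in $D-\{s_2,t_2\}$.

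Then $D'=D-V(P_1)$ is at least $12$-strong. In $D'$, every vertex is adjacent to $s_2$ or $t_2$. The graph $D'-\{s_2,t_2\}$ splits into $N_2\cup C_2$ and $M_2$-type parts, and I need an $(s_2,t_2)$-path or a $(t_2,s_2)$-path in $D'$. Strong connectivity of $D'$ gives this trivially.

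So the real difficulty is only finding $P_1$ so that its removal keeps $s_2,t_2$ connected, and removing any $\le 14$ vertices keeps $D$ strong. Hence any $(s_1,t_1)$-path with at most $14$ vertices avoiding $s_2,t_2$ suffices.

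The main step is therefore to show that $D-\{s_2,t_2\}$, which is $14$-strong with $\alpha\le2$, has a unilateral $s_1$–$t_1$ path on at most $14$ vertices. I would take a shortest $(s_1,t_1)$-path $Q$ in $D-\{s_2,t_2\}$ and use the standard fact that in a digraph with $\alpha\le 2$, a shortest (induced) path has bounded length. The argument: vertices $q_j$ and $q_{j'}$ on $Q$ with $j'\ge j+2$ are not joined by a forward arc. Among $q_0,q_3,q_6,\dots$ the only possible arcs are backward arcs, and the underlying graph on these vertices must have no independent triple. If $Q$ is long, a Ramsey-type argument on backward arcs combined with shortestness should bound the length, e.g.\ by about $8$, well under $14$. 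If that bound fails, I would instead invoke Theorem~\ref{thm:6-strongcobip} on the co-bipartite digraph induced by $N_1\cup M_1$ plus terminals, whose connectivity I estimate from $16$-strongness. I expect this length bound to be the main obstacle.
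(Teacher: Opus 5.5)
\textbf{There is a genuine gap: your main step is false.}

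Your opening observation is correct and is exactly how the paper's proof begins: a path in either direction between $s_1$ and $t_1$, avoiding $s_2,t_2$ and having at most $15$ vertices, finishes the proof, because deleting it leaves $D$ strong. But the step everything else rests on is that such a short path always exists in $D-\{s_2,t_2\}$. That claim is false, and so is the ``standard fact'' you cite for it.

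First, connectivity together with $\alpha\le 2$ does not bound shortest-path lengths, even in tournaments. Let $A$ be the tournament on $a_1,\dots,a_n$ with $a_i\to a_j$ iff $0<j-i\le k$, all other pairs oriented backwards. It is $k$-strong, yet its $(a_1,a_n)$-distance is about $n/k$. Your Ramsey step gives nothing, because non-consecutive vertices of a shortest path may all be joined by backward arcs.

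Second, the failure persists unilaterally and for both pairs at once. Take $k\ge 16$, $n\ge 40k$, and build $D$ as follows.
\begin{itemize}
\item Start with the tournament $A$ above.
\item Add complete symmetric digraphs $B^-,B^+$ on $k$ vertices each, with $B^+\to B^-$.
\item Add the arcs $B^-\to\{a_1,\dots,a_k\}$, $\{a_{k+1},\dots,a_n\}\to B^-$, $\{a_{n-k+1},\dots,a_n\}\to B^+$ and $B^+\to\{a_1,\dots,a_{n-k}\}$.
\item For $i=1,2$ add $s_i\to a_j$ for $j\le n/2$, $a_j\to s_i$ for $j>n/2$, $t_i\to B^-$ and $B^+\to t_i$.
\item Add 2-cycles on $\{s_1,s_2\}$ and on $\{t_1,t_2\}$.
\end{itemize}
The cliques $\{s_1,s_2\}\cup A$ and $\{t_1,t_2\}\cup B^-\cup B^+$ cover all vertices, so $\alpha(D)\le 2$.

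The digraph $D$ is $k$-strong. After deleting fewer than $k$ vertices, the surviving $a_j$ still form a forward chain from $\{a_1,\dots,a_k\}$ to $\{a_{n-k+1},\dots,a_n\}$. This chain closes up through a surviving vertex of $B^+$ and one of $B^-$, and every other surviving vertex has an in- and an out-neighbour on the resulting closed walk.

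Now put $\phi=0$ on $B^-$, $\phi(a_j)=j$, and $\phi=n+1$ on $B^+$. Every arc inside $A\cup B^-\cup B^+$ raises $\phi$ by at most $k$. Within this set:
\begin{itemize}
\item $N^+(s_i)$ lies at $\phi\le n/2$ and $N^-(t_i)$ at $\phi=n+1$;
\item $N^+(t_i)$ lies at $\phi=0$ and $N^-(s_i)$ at $\phi>n/2$.
\end{itemize}
Hence every $(s_i,t_i)$- or $(t_i,s_i)$-path avoiding $s_{3-i},t_{3-i}$ must raise $\phi$ by more than $n/2$, and so has more than $20$ arcs.

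A unilateral linkage does exist here: $D$ is co-bipartite and $16$-strong, so Theorem~\ref{thm:6-strongcobip} applies. But both of its paths are long. So no strategy of the form ``delete a short $P_1$ and use connectivity'' can work, for either pair.

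\textbf{The fallback does not supply the missing argument.} It is where the actual difficulty lies, and as stated it is not an argument. The subdigraph induced by $N_1\cup M_1\cup\{s_1,t_1\}$ inherits no connectivity from $D$. It also discards the common neighbours $C_1$, and it need not contain $s_2$, $t_2$ or any path between them. So Theorem~\ref{thm:6-strongcobip} cannot be invoked on it.

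The missing idea is to treat the absence of short paths as structural information, and then construct two long disjoint paths. The paper does this as follows.
\begin{itemize}
\item It takes fourteen minimal internally disjoint paths for each pair.
\item Using Lemma~\ref{lem:degree-two-selection}, it keeps seven $P$-paths whose first and last internal vertices meet at most eight $Q$-paths, together with six $Q$-paths avoiding those vertices.
\item By minimality, $\alpha\le 2$ and the absence of short paths, the remaining vertices of the $P$-paths split into semicomplete sets $V_1$ (nonadjacent to $t_1$, dominating $s_1$) and $V_2$ (nonadjacent to $s_1$, dominated by $t_1$). There is no arc from $V_2$ to $V_1$, and each $Q$-path meets $V_1$ before $V_2$.
\item Compressing path segments into arcs yields a one-way co-bipartite digraph.
\item The seven--six Lemma~\ref{lem:oneway-seven-six} then supplies the disjoint paths. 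It is a genuine 2-linkage argument resting on Theorem~\ref{thm:local} and the split-digraph lemmas of Section~\ref{sec:split2linksec}.
\end{itemize}
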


In Section \ref{sec:prelimsec} we provide some useful notation and two important theorems on 2-linkages that we will use. In Section \ref{sec:split2linksec} we prove some new results on 2-linkages in split digraphs that will be used in the proof of Theorem \ref{thm:mainsplit}. In Sections \ref{sec:splitunilateralsec}, \ref{sec:smdsec}, \ref{sec:cobiplinksec}, \ref{sec:alpha=2linksec} we prove each of the four theorems above. %
Finally in Section \ref{sec:remarks} we mention some open problems.

\section{Preliminaries}\label{sec:prelimsec}
For an arc \((u,v)\), abbreviated as \(uv\), we say that \(u\) {\bf dominates} \(v\) (\(v\) {\bf is dominated by} \(u\)) and sometimes write \(u \to v\) to emphasize the direction of the arc. For vertex sets \(X,Y\subseteq V(D)\):
\begin{itemize}
\item \(X \to Y\) means every vertex in \(X\) dominates every vertex in \(Y\);
\item \(X \Rightarrow Y\) means \((Y,X)_D = \emptyset\);
\item \(X \mapsto Y\) means both \(X\to Y\) and \(X\Rightarrow Y\) hold.
\end{itemize}

For a vertex $v$ in digraph $D$, we use the following notation:
\[
N_D^+(v) = \{u \in V - v : vu \in A\}, \quad N_D^-(v) = \{w \in V - v : wv \in A\}.
\]
The sets $N_D^+(v)$, $N_D^-(v)$ are called the {\bf out-neighbourhood}, {\bf in-neighbourhood} of $v$.\\

A directed path $P=v_1v_2\ldots{}v_r$ in a digraph $D$ is {\bf minimal} if $D$ does not contain any arc $v_iv_j$ such that $i<j-1$.\\

\begin{Definition}\label{definition:good}
Let $D$ be a digraph, and let $s_1, t_1, s_2, t_2$ be four distinct vertices of $D$. 
We say that the triple $(D, (s_1,t_1), (s_2,t_2))$ is {\bf good} if there exist two disjoint paths $P_1,P_2$ in $D$ so that $P_i$ is either an $(s_i,t_i)$-path or a $(t_i,s_i)$-path for $i\in [2]$. Thus a digraph $D$ is unilaterally 2-linked if and only if the triple $(D, (s_1,t_1), (s_2,t_2))$ is good for every choice of  four distinct vertices $s_1,t_1,s_2,t_2$ of $D$.
\end{Definition}

We are going to use the following two results in Section \ref{sec:split2linksec} and Section \ref{sec:alpha=2linksec}

\begin{theorem}[Local linkage theorem for split digraphs]\label{split digraph:theorem}
  \cite{chen2linksplit} Let \(D=(V_1,V_2;A)\) be a split digraph and \(s_1,s_2,t_1,t_2\) be distinct vertices of \(D\). If \(D-\{s_{3-i},t_{3-i}\}\) has three internally disjoint \((s_i,t_i)\)-paths and \(D-\{s_{i},t_{i}\}\) has four internally disjoint \((s_{3-i},t_{3-i})\)-paths for \(i=1\) or \(2\), then \(D\) has a pair of disjoint \((s_1,t_1)\)- and \((s_2,t_2)\)-paths.
\end{theorem}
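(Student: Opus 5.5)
The natural strategy is a minimal-counterexample argument that exploits the semicomplete part $V_2$ to reroute paths. Fix $i=1$ (the case $i=2$ is symmetric). Let $\mathcal{P}=\{P^1,P^2,P^3\}$ be three internally disjoint $(s_1,t_1)$-paths in $D-\{s_2,t_2\}$, and let $\mathcal{Q}=\{Q^1,\dots,Q^4\}$ be four internally disjoint $(s_2,t_2)$-paths in $D-\{s_1,t_1\}$. Among all such families, choose them to minimise the total number of vertices. In particular every $P^a$ and every $Q^b$ is then a minimal path.

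Suppose the conclusion fails. Then every $P^a$ meets every $Q^b$ in an internal vertex, since otherwise that pair is already the required linkage. The basic structural fact I would use about a minimal path $R$ in a split digraph is the following. Consecutive vertices of $R$ cannot both lie in $V_1$. Any two vertices $x,y\in V(R)\cap V_2$ that are not consecutive on $R$ are adjacent, because $V_2$ is semicomplete. By minimality the arc between them must then go backwards along $R$, i.e. from the later vertex to the earlier one. I would also record that each $V_1$-vertex on $R$ has its predecessor and successor in $V_2$.

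The main step is a rerouting lemma. Suppose some $P^a$ meets $Q^b$ and $Q^c$ ($b\neq c$) in vertices $x\in V(Q^b)\cap V_2$ and $y\in V(Q^c)\cap V_2$. Then $x$ and $y$ are adjacent, and the arc between them lets us splice $Q^b[s_2,x]$ with $Q^c[y,t_2]$, or $Q^c[s_2,y]$ with $Q^b[x,t_2]$. The aim is to show that one of these spliced $(s_2,t_2)$-paths avoids some $P^{a'}$, or else yields a smaller family, contradicting minimality. When the intersections occur at $V_1$-vertices, I would replace them by their neighbours on $P^a$, which lie in $V_2$, and apply the same argument. Having three $P$-paths against four $Q$-paths gives the room needed for a pigeonhole argument. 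Each $P^a$ meets all four $Q$'s, so consider the order in which $P^a$ first meets the $Q$'s. Comparing the positions of these meeting points along the $Q$'s, and using that all relevant $V_2$-vertices are pairwise adjacent, one should find a $Q$-path, possibly after one splice, whose vertex set misses one of the three $P$-paths entirely.

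I expect the main obstacle to be the bookkeeping in this last step. A spliced path may hit a $P$-path that the original $Q$-paths avoided only at a few vertices, and the direction of the arc between $x$ and $y$ decides which splice is available. I would first handle the case in which all intersection vertices lie in $V_2$. That case is cleanest, since there an extremal choice — the last intersection of $P^a$ with $\bigcup\mathcal{Q}$ and the first intersection of $P^{a'}$ with it — together with the backward-arc property of minimal paths should force a contradiction. I would then reduce the $V_1$-intersection cases to it by the neighbour-shifting trick above.
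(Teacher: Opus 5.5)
\textbf{The decisive step is missing.} The paper does not prove this theorem; it is quoted from \cite{chen2linksplit} and used as a black box. Your proposal therefore has to stand on its own, and it does not. The decisive step is only announced (``one should find a $Q$-path, possibly after one splice, whose vertex set misses one of the three $P$-paths''), and the tools you offer for it do not deliver it.

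Splicing $Q^b[s_2,x]\circ xy\circ Q^c[y,t_2]$ at two arbitrary $V_2$-vertices $x,y$ of $P^a$ gives no control over which $P$-paths the new path meets. Indeed, $Q^b[s_2,x]$ and $Q^c[y,t_2]$ may each already meet all three of them. The fallback ``or else yields a smaller family'' is not available either. The spliced path shares the tail $Q^c[y,t_2]$ with $Q^c$ (or $Q^b[x,t_2]$ with $Q^b$), so it cannot replace a member of $\mathcal{Q}$ without destroying internal disjointness, and minimality is never contradicted.

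Your extremal choice in the all-$V_2$ case also handles only half of that case. Let $x$ be the last vertex of $P^a$ on $\bigcup_b V(Q^b)$ and $y$ the first vertex of $P^{a'}$ there. If $y\to x$, then $P^{a'}[s_1,y]\circ yx\circ P^a[x,t_1]$ meets the $Q$-paths only in $\{x,y\}$, which is a contradiction. If $x\to y$, you get nothing. The backward-arc property of minimal paths cannot exclude this orientation, because it only constrains pairs of vertices on the same path.

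\textbf{The missing idea.} What works is the extremal choice used in the paper's proofs of Lemmas~\ref{Lemma:{s_1,t_1} in V_1 : (3,3)} and~\ref{Lemma:{s_1,s_2} in V_1 : (3,3)}. Take $z$ and $w$ to be the first and last vertices of one $Q^b$ on $\bigcup_a V(P^a)$, say $z\in V(P^a)$ and $w\in V(P^{a'})$. Then compare a vertex $\alpha$ near $t_1$ on $P^a$ with a vertex $\beta$ near $s_1$ on $P^{a'}$. These are the terminal neighbours $t_1^a,s_1^{a'}$, or their $V_2$-neighbours when those lie in $V_1$. If $\alpha\to\beta$, the walk $Q^b[s_2,z]\circ P^a[z,\alpha]\circ\alpha\beta\circ P^{a'}[\beta,w]\circ Q^b[w,t_2]$ meets only two $P$-paths. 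If $\beta\to\alpha$, then $P^{a'}[s_1,\beta]\circ\beta\alpha\circ P^a[\alpha,t_1]$ is an $(s_1,t_1)$-path with very few internal vertices, so it must miss some $Q$-path.

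\textbf{Where the fourth $Q$-path is used.} Your sketch never says; the ``pigeonhole'' comes with no count. The count matters as soon as a terminal neighbour lies in $V_1$, e.g.\ $t_1\in V_2$ and $t_1^a\in V_1$. Then the short path becomes something like $s_1s_1^{a'}vt_1^at_1$, with three internal vertices. Such a path automatically misses one of four $Q$-paths, but not one of three. Compare the long second half of the proof of Lemma~\ref{Lemma:{s_1,s_2} in V_1 : (3,3)}, where only three $Q$-paths are available.

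You must also handle separately the case where the extremal vertex is the shifted vertex itself, e.g.\ $z=t_1^a\in V_1$. There the walk above does not exist. Your ``neighbour-shifting trick'' is exactly where this case analysis, including the configurations with $s_1$ or $t_1$ in $V_2$, would have to be carried out, and it is not.
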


\begin{theorem}[Local linkage theorem for semicomplete digraphs]
\label{thm:local}\cite{bangADM41}
Let $T$ be semicomplete and let $s_1,s_2,t_1,t_2$ be distinct vertices.
If $T-\{s_2,t_2\}$ contains three internally vertex-disjoint
$(s_1,t_1)$-paths and $T-\{s_1,t_1\}$ contains two internally
vertex-disjoint $(s_2,t_2)$-paths, then $T$ contains vertex-disjoint
$(s_1,t_1)$- and $(s_2,t_2)$-paths.
\end{theorem}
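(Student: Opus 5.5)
The plan is to fix a suitable $(s_2,t_2)$-path and then show that $s_1$ can still reach $t_1$ off it. Concretely, among all $(s_2,t_2)$-paths in $T-\{s_1,t_1\}$ I choose a \emph{shortest} one, $P=v_1v_2\cdots v_r$ with $v_1=s_2$ and $v_r=t_2$. If $T-V(P)$ contains an $(s_1,t_1)$-path we are done, so I assume it does not and aim for a contradiction.

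The first step records the structure that shortness gives. Since $P$ is minimal and $T$ is semicomplete, we have $v_j\to v_i$ for all $j\ge i+2$. For a vertex $x\notin V(P)$, shortness forbids $v_i\to x\to v_j$ with $j\ge i+3$. Hence, reading along $P$, the in-neighbours of $x$ on $P$ precede its out-neighbours, except possibly for an overlap window of at most two consecutive positions.

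The second step analyses the cut. Let $S$ be the set of vertices reachable from $s_1$ in $T-V(P)$, and let $R=V(T)-V(P)-S$, so that $t_1\in R$. There is no arc from $S$ to $R$, so semicompleteness gives $R\mapsto S$.

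Next I use the three internally disjoint $(s_1,t_1)$-paths $Q_1,Q_2,Q_3$ in $T-\{s_2,t_2\}$. Each $Q_i$ must leave $S$ through an internal vertex of $P$, that is, through some $v_j$ with $2\le j\le r-1$. Take the first vertex of $P$ on each $Q_i$ and the last vertex of $P$ on each $Q_i$. For each such pair, the vertex before the first hit lies in $S$ and the vertex after the last hit lies in $R$.

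The third step is a rerouting exchange. The two internally disjoint $(s_2,t_2)$-paths in $T-\{s_1,t_1\}$ guarantee that $s_2\not\to t_2$ can be bypassed in two ways. Combined with $R\to S$ and the backward arcs $v_j\to v_i$, this should let me do the following. Pick a $Q_i$ whose hits on $P$ are confined to a short interval $v_a,\dots,v_b$; with three disjoint $Q_i$, some path uses few vertices of $P$, or uses vertices near one another. I then shortcut $P$ around $\{v_a,\dots,v_b\}$ through a vertex of $S\cup R$ not on that $Q_i$, and use the freed vertices $v_a,\dots,v_b$ to complete an $(s_1,t_1)$-path. In the degenerate case where a single vertex $v_j$ meets all three $Q_i$ is impossible by disjointness, so the three paths occupy at least three distinct internal vertices of $P$. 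This gives room to choose the $Q_i$ and the shortcut disjointly.

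I expect the main obstacle to be exactly this exchange. One has to prove that some $Q_i$ can be made to meet $P$ in a set avoidable by a detour through $S\cup R$, while the detour itself avoids that $Q_i$. I would handle it by a case analysis on the positions of the first and last hits of the three paths on $P$. The first thing I would try is to take the $Q_i$ whose last hit $v_b$ has smallest index, and to detour $P$ via an arc $v_{a-1}\to y\to v_{b+1}$ with $y$ on another $Q_\ell$. Such a $y$ should exist from the dominations $R\to S$ together with the structural property of the first step. If this fails, I would strengthen the choice of $P$, for example by taking it shortest and, subject to that, hitting $\bigcup Q_i$ in as few vertices as possible, to force the contradiction.
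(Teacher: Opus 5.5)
There is a genuine gap. Your second step (the cut $S,R$ with $R\mapsto S$, and the entry and exit vertices of each $Q_i$) is correct but routine. The entire content of the theorem sits in your third step, which is only a hope, and none of its ingredients is established.

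Three internally disjoint $(s_1,t_1)$-paths can each meet a long $P$ at widely separated vertices, so nothing forces some $Q_i$ to meet $P$ in a short interval. Moreover, a detour $v_{a-1}\to y\to v_{b+1}$ with $y\notin V(P)\cup\{s_1,t_1\}$ and $b\ge a+1$ would itself be a shorter $(s_2,t_2)$-path in $T-\{s_1,t_1\}$. By your choice of $P$, such detours therefore exist only when $a=b$.

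The structural fact in your first step is also reversed. Forbidding $v_i\to x\to v_j$ with $j\ge i+3$ says that every out-neighbour of $x$ on $P$ has index at most two more than every in-neighbour. So $x$ dominates an initial part of $P$ and is dominated by a final part, not the other way round. Finally, the second $(s_2,t_2)$-path appears only in a sentence that is never used.

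In fact, fixing a shortest $P$ and aiming for a contradiction targets a false statement, even under the full hypotheses. Here is a counterexample.
Let $A=\{a_1,a_2,a_3\}$ and $B=\{b_1,\dots,b_k\}$ with $k\ge 4$. Put $s_1\to a_i\to t_1$, $t_1\to s_1$ and $t_1\to b_j\to s_1$ for all $i,j$. Orient the sequences $s_2,a_1,a_2,a_3,t_2$ and $s_2,b_1,\dots,b_k,t_2$ with consecutive pairs forwards and all nonconsecutive pairs backwards. Let $b_j\to a_i$ if $j\ge i$ and $a_i\to b_j$ if $j<i$, and orient the four pairs between $\{s_1,t_1\}$ and $\{s_2,t_2\}$ arbitrarily.

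The paths $s_1a_it_1$ and the two sequences witness the hypotheses. A breadth-first search shows that $P=s_2a_1a_2a_3t_2$ is the unique shortest $(s_2,t_2)$-path in $T-\{s_1,t_1\}$. But $s_1$ is a sink in $T-V(P)$, so $S=\{s_1\}$ and $R=\{t_1\}\cup B$. No $y\in B$ satisfies $v_{a-1}\to y\to v_{a+1}$ for any $a$, and your tie-breaking refinement is void since $P$ is unique.

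A linkage does exist (e.g.\ $s_2b_1\cdots b_kt_2$ and $s_1a_1t_1$), but only through a longer $(s_2,t_2)$-path. Deleting $B$ leaves an instance with a single $(s_2,t_2)$-path and no linkage at all. So a proof must exploit the second $(s_2,t_2)$-path essentially and must be prepared to abandon the shortest path altogether; your proposal gives no mechanism for this.

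The paper only quotes this theorem from \cite{bangADM41}, so an argument here has to be self-contained. Yours stops exactly where the real work begins.
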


We shall use the following lemma in Section \ref{sec:alpha=2linksec}.

\begin{lemma}\label{lem:degree-two-selection}
Let $G$ be a bipartite graph with parts $L,R$, where
$|L|=|R|=N\geq1$, and suppose that every vertex of $L$ has degree
at most two.
For every integer $p$ with $1\leq p\leq N$, there is a set
$L_0\subseteq L$ such that $|L_0|=p$ and
$|N_G(L_0)|\leq p+1$.
\end{lemma}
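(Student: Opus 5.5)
We encode $G$ as an auxiliary multigraph $H$ on vertex set $R$ and then pick $L_0$ component by component. In $H$, each $x\in L$ with $N_G(x)=\{a,b\}$, $a\neq b$, becomes an edge $ab$. Each $x\in L$ with $N_G(x)=\{a\}$ becomes a loop at $a$. Vertices of $L$ of degree $0$ become "empty edges" that cover no vertex of $R$. Then $H$ has exactly $N$ edges (counting loops and empty edges) and $N$ vertices. For $L_0\subseteq L$, $N_G(L_0)$ is exactly the set of vertices of $H$ covered by the corresponding edges. So it suffices to find $p$ edges of $H$ covering at most $p+1$ vertices.

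\emph{Empty edges.} We take the empty edges first; they cost nothing. If there are at least $p$ of them we are done. Otherwise we discard them, lower $p$ accordingly, and note that the remaining proper edges are now strictly fewer than $|R|$.

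\emph{Component balance.} Let $C_1,\dots,C_m$ be the connected components of $H$ (restricted to the proper edges and loops), with $v_i$ vertices and $e_i$ edges. Connectivity gives $e_i\geq v_i-1$ for every $i$. Call $C_i$ \emph{surplus} if $e_i\geq v_i$ and \emph{deficit} if $e_i=v_i-1$. Since $\sum_i e_i\leq\sum_i v_i$, the total surplus $\sum_{C_i \text{ surplus}}(e_i-v_i)$ is at most the number of deficit components. Deficit components with $e_i=0$, i.e.\ isolated vertices of $R$, are never used.

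\emph{Ordering within a component.} In a connected component, the edges can be ordered (e.g.\ by a BFS/DFS of the edges) so that every edge after the first shares a vertex with an earlier one. Hence the first $j$ edges cover at most $\min(j+1,v_i)$ vertices.

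\emph{Selection.} We list the edges in the following order: first all surplus components, then the deficit components with $e_i\geq1$, each component listed in the order above. We take the first $p$ edges of this list. Suppose these consist of some full components plus a prefix of $j\geq0$ edges of one further component. We bound the number of covered vertices in three parts:
\begin{enumerate}
\item A full surplus component covers $v_i\leq e_i$ vertices.
\item A full deficit component covers $e_i+1$ vertices.
\item The partial component covers at most $j+1$ vertices.
\end{enumerate}
While only surplus components are taken fully, the count is at most (edges taken)$+1\leq p+1$. Once deficit components are also taken fully, the excess $+1$ of each must be paid for by the accumulated surplus of the surplus components, and then the final partial component still adds at most $+1$.

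The main obstacle is justifying this accounting, i.e.\ showing the surplus suffices to absorb the deficits. I would compare the bound against $N$ directly rather than counting components. Suppose all surplus components and $d$ deficit components are taken fully. The covered set then has size at most
\[
\sum_{\text{surplus}} v_i+\sum_{\text{taken deficit}}v_i\leq \sum_{\text{surplus}} e_i+\sum_{\text{taken deficit}}e_i + d .
\]
I would then use the global inequality $\sum_i e_i\leq\sum_i v_i$, restricted to the untaken part, to bound the excess $d$ by the surplus. The partial component contributes its extra $+1$, which gives the required total of at most $p+1$.
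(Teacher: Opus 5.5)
\textbf{There is a genuine gap.} Your setup matches the paper's: a multigraph on $R$, components with $e_i\ge v_i$ listed before the tree components, and each component ordered so that every prefix is connected. The gap is in the one step that needs proof, namely that the surplus pays for the fully taken tree components. There your inequalities point the wrong way.

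``The total surplus is at most the number of deficit components'' and $\sum_i e_i\le\sum_i v_i$ are both \emph{upper} bounds on the surplus $s$. What you need is a \emph{lower} bound of the form $s\ge d$, or $s+k\ge d$. No argument resting on $\sum_i e_i\le \sum_i v_i$ can work, because that inequality also holds when $|L|<|R|$, and there the lemma fails. For example, take $L=\{a,b\}$, $R=\{1,2,3,4\}$, $N_G(a)=\{1,2\}$, $N_G(b)=\{3,4\}$ and $p=2$.

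Your empty-edge step makes this concrete if ``lower $p$ accordingly'' means you now look for $p-k$ proper edges covering at most $p-k+1$ vertices. Add two degree-$0$ vertices to the example above, so $|L|=|R|=4$, and take $p=4$, $k=2$. The reduced task asks for $2$ proper edges covering at most $3$ vertices, which is impossible. Here the surplus is $0$ while trees are taken fully, so ``paid for by the accumulated surplus'' fails. The observation that the remaining proper edges are fewer than $|R|$ is a warning sign, not a help.

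\textbf{What is missing} is the exact count coming from $|L|=|R|$, together with keeping the target $p+1$ rather than $p-k+1$. Keeping $p+1$ means the $k$ empty edges give you $k$ units of slack.
\begin{itemize}
\item The proper edges number exactly $N-k$, so $\sum_i(e_i-v_i)=-k$.
\item Hence $s=t+q-k$, where $t$ is the number of trees with at least one edge and $q$ is the number of isolated vertices of $R$. The isolated vertices you say are ``never used'' are exactly what helps pay here.
\item Suppose the first $p$ edges consist of the $k$ empty edges, all surplus components, $d\le t$ full trees, and $j$ edges of one further component. Then the number of covered vertices is at most
\[
(p-k)-s+d+1 \;=\; p+1-q-(t-d)\;\le\; p+1 .
\]
\end{itemize}
The case where you stop inside the surplus part is as you said.

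The paper avoids the degree-$0$ bookkeeping altogether. It turns each such vertex into a loop at an arbitrary vertex of $R$, which keeps $|E(M)|=|V(M)|=N$ and makes the surplus exactly $t+q\ge t$.
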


\begin{proof}
We represent the vertices of $L$ by the edges of a multigraph
$M$ on vertex set $R$.
A vertex of $L$ with two neighbours gives an edge joining them,
and a vertex with one neighbour gives a loop at that neighbour.
For a vertex with no neighbours, use a loop at an arbitrary
vertex of $R$.
Edges are kept distinct, even when their endpoints coincide.
Thus $M$ has $N$ vertices and $N$ edges.
The endpoints of each edge contain all neighbours of the
corresponding vertex of $L$.
It is therefore enough to find $p$ edges of $M$ that are incident
with at most $p+1$ vertices. Indeed, let $U$ be the set of endpoints
of such edges and let $L_0$ be the set of corresponding vertices
of $L$. Then $|L_0|=p$ and $|N_G(L_0)|\leq |U|\leq p+1$.

We order the edges of $M$ component by component, ignoring
isolated vertices.
First list the components having at least as many edges as
vertices, and then list the nontrivial tree components.
Within each component, order the edges so that every nonempty
initial segment is connected on its incident vertices.
For a component with at least two vertices, this can be done
by growing a spanning tree and then listing the remaining edges.
For a component consisting of one vertex and loops, any order
has the required property.

Let $q$ be the number of isolated vertices and $t$ the number
of nontrivial tree components.
For each component $C$ of $M$, we consider the difference
$|E(C)|-|V(C)|$. Each nontrivial tree component has one fewer edge than vertices,
and each isolated vertex has one vertex and no edges.
Thus these components contribute $-t-q$ in total to the sum
of $|E(C)|-|V(C)|$ over all components of $M$.
Since
\[
\sum_{C}
\bigl(|E(C)|-|V(C)|\bigr)
=|E(M)|-|V(M)|=0,
\]
the sum over the components listed first equals $t+q$.
Consequently, after any whole component has been listed, the
number of incident vertices is at most the number of listed
edges.
Indeed, each component in the first group has at least as many
edges as vertices, and each subsequent tree component reduces
the accumulated difference by one; there are only $t$ such trees.

Consider the first $p$ edges in this ordering.
The completed components together are incident with at most as
many vertices as they have edges.
If the next component is only partly listed, its initial segment
of $r$ edges is connected and hence uses at most $r+1$ vertices.
Thus the first $p$ edges are incident with at most $p+1$ vertices.
The corresponding $p$ vertices of $L$ give the required set $L_0$.
\end{proof}

\section{2-linkages in split digraphs}\label{sec:split2linksec}

In this section we establish some results on 2-linkages in split digraphs that complement those obtained in \cite{chen2linksplit} and which will be used in the proof of Theorem \ref{thm:mainsplit} and Theorem \ref{thm:alpha-two-unilateral}.

\begin{lemma}\label{Lemma:{s_1,t_1} in V_1 : (3,3)}
Let \(D=(V_1,V_2;A)\) be a split digraph, and let \(s_1,s_2,t_1,t_2\) be distinct vertices of \(D\) such that \(\{s_1,t_1\} \subseteq V_1\) or \(\{s_2,t_2\} \subseteq V_1\). If \(D-\{s_{3-i},t_{3-i}\}\) contains three internally disjoint \((s_i,t_i)\)-paths for each \(i \in \{1,2\}\), then \(D\) contains a pair of disjoint \((s_1,t_1)\)-path and \((s_2,t_2)\)-path.
\end{lemma}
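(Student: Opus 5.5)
By symmetry of the roles of the two pairs in the hypothesis, we may assume that \(\{s_1,t_1\}\subseteq V_1\). Since \(V_1\) is independent, every \((s_1,t_1)\)-path has the form \(s_1x\ldots yt_1\) with \(x\in N^+(s_1)\cap V_2\) and \(y\in N^-(t_1)\cap V_2\). The internal part between \(x\) and \(y\) can be shortcut: \(V_2\) is semicomplete, so either \(x=y\) or the arc between \(x\) and \(y\) is present. If \(xy\in A\) we may take the path \(s_1xyt_1\). In general, three internally disjoint \((s_1,t_1)\)-paths in \(D-\{s_2,t_2\}\) give three distinct first vertices \(x_1,x_2,x_3\in N^+(s_1)\cap V_2\) and three distinct last vertices \(y_1,y_2,y_3\in N^-(t_1)\cap V_2\), all different from \(s_2,t_2\). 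The plan is to find a short \((s_1,t_1)\)-path \(P_1\), of length two (\(s_1zt_1\) with \(z\in N^+(s_1)\cap N^-(t_1)\)) or length three (\(s_1x_iy_jt_1\)), whose removal leaves an \((s_2,t_2)\)-path. Menger's theorem will be applied in \(D-V(P_1)\).

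\textbf{Case 1: there is a vertex \(z\in N^+(s_1)\cap N^-(t_1)\) other than \(s_2,t_2\).} Take \(P_1=s_1zt_1\). Deleting \(\{s_1,t_1,z\}\) destroys at most one of the three internally disjoint \((s_2,t_2)\)-paths in \(D-\{s_1,t_1\}\), namely the one through \(z\). So an \((s_2,t_2)\)-path survives and we are done.

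\textbf{Case 2: no such \(z\) exists.} Then the sets \(X=\{x_1,x_2,x_3\}\) and \(Y=\{y_1,y_2,y_3\}\) are disjoint. Whenever \(x_iy_j\in A\), the path \(s_1x_iy_jt_1\) uses exactly two vertices of \(V_2\). Let \(Q_1,Q_2,Q_3\) be the three internally disjoint \((s_2,t_2)\)-paths. We need a pair \((x_i,y_j)\) with \(x_iy_j\in A\) such that \(\{x_i,y_j\}\) meets at most two of the \(Q_k\).

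The counting goes as follows. Each vertex lies on at most one \(Q_k\). A pair meets all three paths only if both \(x_i\) and \(y_j\) lie on different \(Q_k\)'s. Even then, the surviving path plus rerouting may still work, but more simply: a pair fails only if \(x_i\in Q_a\), \(y_j\in Q_b\) with \(a\neq b\), and the third path is hit by nothing. That is impossible, since two vertices can destroy at most two paths. So any single arc pair \(x_iy_j\) suffices.

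The main obstacle is therefore when no arc goes from \(X\) to \(Y\), that is, \(Y\Rightarrow X\) and hence \(Y\to X\) by semicompleteness of \(V_2\). In this subcase every \((s_1,t_1)\)-path must leave \(X\) and reach \(Y\) through other vertices. I would then use the three given \(s_1\)–\(t_1\) paths \(R_1,R_2,R_3\) directly. Take a shortest (minimal) one, \(R\). Since \(R\) is minimal and \(V_2\) is semicomplete, all its \(V_2\)-vertices after the first induce a path whose consecutive structure forces \(|V(R)\cap V_2|\le 3\): minimality forbids forward chords, so the \(V_2\)-vertices of \(R\) are dominated backwards, and \(V_1\)-vertices on \(R\) can be shortcut except when needed. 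I would then argue that some minimal \((s_1,t_1)\)-path in \(D-\{s_2,t_2\}\) meets at most two of \(Q_1,Q_2,Q_3\). If every such path meets all three, I would exploit the backward arcs \(Y\to X\) together with the \(Q_k\) to reroute, building \(Q\) from pieces of two \(Q_k\)'s joined by a backward arc. This rerouting step is where I expect the real case analysis, choosing \(R\) to minimise \(|V(R)\cap\bigcup V(Q_k)|\). The symmetric assumption \(\{s_2,t_2\}\subseteq V_1\) is handled the same way with the indices swapped.
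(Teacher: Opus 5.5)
Your Case~1 and the subcase with an arc from $X$ to $Y$ are fine, since any $(s_1,t_1)$-path with at most two internal vertices misses one of the three internally disjoint $Q_k$. The gap is the remaining subcase, $Y\to X$, which is the heart of the lemma and which you leave open.

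The tools you sketch for that subcase also do not work:
\begin{itemize}
\item Minimality does not force $|V(R)\cap V_2|\le 3$. Take $V_2=\{a_1,\dots,a_k\}$ with $a_i\to a_{i+1}$ and $a_j\to a_i$ for $j>i+1$, and $s_1\to a_1$, $a_k\to t_1$. Then $s_1a_1\cdots a_kt_1$ is a minimal path with $k$ vertices in $V_2$.
\item Splicing two $Q_k$'s together with a backward arc gives no control over which $P$-paths the new path meets, because each $Q_k$ may meet all three $P$-paths.
\end{itemize}

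The missing idea is to reroute a single $Q$-path along the $P$-paths rather than along other $Q$-paths. If $Q_1$ misses some $P_i$ you are done. Otherwise, let $z$ and $w$ be the first and last vertices of $Q_1$ on $V(P_1)\cup V(P_2)\cup V(P_3)$, with $z\in V(P_{i'})$ and $w\in V(P_{j'})$. Let $s_1^{j'}$ be the successor of $s_1$ on $P_{j'}$ and $t_1^{i'}$ the predecessor of $t_1$ on $P_{i'}$. Both lie in $V_2$, so they are equal or adjacent.
\begin{itemize}
\item If they are equal, or if $s_1^{j'}\to t_1^{i'}$, then $s_1s_1^{j'}t_1^{i'}t_1$ (or $P_{i'}$ itself) has at most two internal vertices and misses some $Q_k$. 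This is all your preliminary cases really use.
\item If $t_1^{i'}\to s_1^{j'}$, which is exactly your $Y\to X$ situation, consider
\[
Q_1[s_2,z]\circ P_{i'}[z,t_1^{i'}]\circ t_1^{i'}s_1^{j'}\circ P_{j'}[s_1^{j'},w]\circ Q_1[w,t_2].
\]
Because $z$ and $w$ are the first and last hits, this $(s_2,t_2)$-walk lies in $(V(Q_1)\cup V(P_{i'})\cup V(P_{j'}))\setminus\{s_1,t_1\}$. It therefore contains an $(s_2,t_2)$-path disjoint from the third $P$-path.
\end{itemize}
This single dichotomy proves the lemma and makes your preliminary case split unnecessary.
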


\begin{proof}
W.l.o.g., suppose that \(\{s_1,t_1\} \subseteq V_1\).
Let \(Q_1,Q_2,Q_3\) be three internally disjoint \((s_2,t_2)\)-paths in \(D-\{s_1,t_1\}\), and let \(P_1,P_2,P_3\) be three internally disjoint \((s_1,t_1)\)-paths in \(D-\{s_2,t_2\}\).
For each \(i \in \intthree\), let \(s_1^i\) be the successor of \(s_1\) on \(P_i\), and let \(t_1^i\) be the predecessor of \(t_1\) on \(P_i\). Since \(\{s_1,t_1\} \subseteq V_1\), we have \(s_1^i, t_1^i \in {V_2}\) for all \(i \in \intthree\).
Let \(z\) be the first vertex and \(w\) be the last vertex at which \(Q_1\) intersects \(V(P_1)\cup V(P_2)\cup V(P_3)\).
Then \(z \in V(P_{i'})\) and \(w \in V(P_{j'})\) for some indices \(i',j' \in\intthree\), where \(i'\) and \(j'\) may be identical.

If \(t_1^{i'} \to s_1^{j'}\) is an arc in \(D\), then the \((s_2,t_2)\)-path
\(
Q=Q_1[s_2,z] \circ P_{i'}[z,t_1^{i'}] \circ t_1^{i'}s_1^{j'} \circ P_{j'}[s_1^{j'},w] \circ Q_1[w,t_2]
\)
is disjoint from \(P_i\) for any \(i \in \{1,2,3\}\setminus\{i',j'\}\).
Otherwise, \(s_1^{j'} \to t_1^{i'}\) is an arc in \(D\), and then the path \(P = s_1 \to s_1^{j'} \to t_1^{i'} \to t_1\) is an \((s_1,t_1)\)-path that is disjoint from \(Q_j\) for some \(j \in \intthree\).
This completes the proof.
\end{proof}

\begin{lemma}\label{Lemma:{s_1,s_2} in V_1 : (3,3)}
    Let \(D=(V_1,V_2;A)\) be a split digraph, and let \(s_1,s_2,t_1,t_2\) be distinct vertices of \(D\) such that \(\{s_1,s_2\} \subseteq V_1\) and \(\{t_1,t_2\} \subseteq V_2\). If \(D-\{s_{3-i},t_{3-i}\}\) contains three internally disjoint \((s_i,t_i)\)-paths for each \(i \in \{1,2\}\), then {$D$ has a pair of disjoint paths $P_1,P_2$ such that $P_i$ is an $(s_i,t_i)$-path. In particular \((D,\{s_1,t_1\},\{s_2,t_2\})\) is good}. 
\end{lemma}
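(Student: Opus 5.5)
Our plan is to imitate the proof of Lemma~\ref{Lemma:{s_1,t_1} in V_1 : (3,3)}: splice the paths together at the semicomplete part $V_2$. Let $P_1,P_2,P_3$ be internally disjoint $(s_1,t_1)$-paths in $D-\{s_2,t_2\}$ and $Q_1,Q_2,Q_3$ internally disjoint $(s_2,t_2)$-paths in $D-\{s_1,t_1\}$. Since $s_1,s_2\in V_1$, the successors $s_1^i$ of $s_1$ on $P_i$ and $s_2^i$ of $s_2$ on $Q_i$ lie in $V_2$. The vertices $t_1,t_2\in V_2$ are adjacent to each other and to every vertex of $V_2$. We may assume each path is minimal, since shortcuts preserve internal disjointness. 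If some $P_i$ avoids some $Q_j$ we are done, so assume every $P_i$ meets every $Q_j$.

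The key step uses the arc between $t_1$ and $t_2$. Suppose first $t_1\to t_2$. Take a path $Q_j$, and let $w$ be the last vertex of $Q_j$ lying on $P_1\cup P_2\cup P_3$, say on $P_{j'}$. Then $Q=Q_j[s_2,\cdot]$ can be rerouted as follows. Go from $s_2$ to $s_2^{j}$. Then use the semicomplete structure: either $s_2^j\to t_1^{\,}$-side vertices or we obtain short paths $s_1 s_1^i t_1$ of length two via $V_2$. Concretely, for each $i$ compare $s_1^i$ with $t_2$ and $s_2^i$ with $t_1$. If $s_2^j\to t_2$ for some $j$, then $s_2s_2^jt_2$ is a path of length two. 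It meets at most one $P_i$, so one of the three $P_i$ avoids it and we are done. Symmetrically, if $s_1^i\to t_1$ we are done. Hence we may assume $t_2\to s_2^j$ and $t_1\to s_1^i$ for all $i,j$ (when $s_2^j=t_2$ the path is a single arc and we are done at once).

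Now the remaining case has $t_1$ dominating all the $s_1^i$ and $t_2$ dominating all the $s_2^j$. Here we construct the paths by crossing. Consider the successors $s_1^1,s_1^2,s_1^3,s_2^1,s_2^2,s_2^3$, all in $V_2$, together with the predecessors $t_1^i$ of $t_1$ on $P_i$, which may lie in $V_1$ or $V_2$. The idea is to take $P=s_1s_1^iXt_1$ and $Q=s_2s_2^jYt_2$, where $X$ and $Y$ are arcs or short segments inside $V_2$. Since $s_1^i$ and $s_2^j$ are adjacent, an arc $s_1^i\to s_2^{j}$ gives, together with a tail of $Q_j$, a route to $t_2$. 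Compare this with $s_2^j\to s_1^i$ and a tail of $P_i$.

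We use a counting/tournament argument on the $3\times 3$ bipartite pattern of arcs between $\{s_1^i\}$ and $\{s_2^j\}$, combined with the last-intersection trick from Lemma~\ref{Lemma:{s_1,t_1} in V_1 : (3,3)}. The goal is to find $i\neq i'$ and $j\neq j'$ so that one path uses only $P_i\cup P_{i'}$ spliced at a $V_2$ arc, while the other path is some untouched $Q_{j''}$, or the symmetric configuration. Distinctness of the three paths per pair always leaves one path untouched, exactly as in the previous lemma. The rerouted path leaves $P_{i'}$ at the first meeting vertex with $Q$ and re-enters via an arc in $V_2$.

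The main obstacle is that $t_1,t_2\in V_2$ are terminals that cannot be used as internal connectors, and the predecessors of $t_1,t_2$ may lie in $V_1$. So the clean two-arc splice of the previous lemma is unavailable at the $t$-end. We would handle this by doing all splicing at the $s$-end, where the successors lie in $V_2$: any two of $s_1^1,s_1^2,s_1^3$ are adjacent, giving a shortcut $s_1 s_1^{i}\to s_1^{i'}$ followed by $P_{i'}$ that avoids $P_{i''}$. We then argue, as in Lemma~\ref{Lemma:{s_1,t_1} in V_1 : (3,3)}, with $z,w$ being the first and last intersections of some $Q_1$ with $\bigcup P_i$, choosing the orientation of the arc between $s_1^{i'}$-type vertices appropriately. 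We expect the careful case analysis at this step to be the bulk of the work.
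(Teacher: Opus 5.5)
Your proposal has a genuine gap. The only step you actually carry out is the observation that minimality forces $t_1\to s_1^i$ and $t_2\to s_2^j$. That observation is correct: otherwise $s_1s_1^it_1$ or $s_2s_2^jt_2$ has a single internal vertex and misses one of the three paths of the other family. Everything after it, which you yourself call ``the bulk of the work'', is left as an unspecified case analysis, and the devices you point to cannot supply it.

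\begin{itemize}
\item The case split on the arc between $t_1$ and $t_2$ is vacuous, since neither sought path may use that arc.
\item An arc between $s_1^i$ and $s_2^j$, with a tail of $Q_j$ or $P_i$, only gives routes from $s_1$ to $t_2$ or from $s_2$ to $t_1$. It mixes the two pairs unless you combine it with further structure, which you do not specify.
\item The shortcut $s_1s_1^is_1^{i'}P_{i'}[s_1^{i'},t_1]$ is an $(s_1,t_1)$-path avoiding $P_{i''}$, which is irrelevant. You need an $(s_1,t_1)$-path avoiding some $Q_j$, or an $(s_2,t_2)$-path avoiding some $P_i$. This shortcut still contains all of $P_{i'}$, which meets every $Q_j$.
\end{itemize}

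The real difficulty is the one you identify but do not resolve, and splicing only at the $s$-end cannot bypass it. To reroute a $Q$-path around one $P$-path, you must leave some $P_a$ at a $V_2$-vertex lying after the first vertex $z$ where that $Q$-path meets $\bigcup_i V(P_i)$, and jump to some $s_1^b$. The natural take-off point is $t_1^a$. The paper's first claim shows that, under the contradiction hypothesis, $z\in V(P_a)$ forces $t_1^a\in V_1$, so this splice always fails.

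The paper therefore goes one step further back, to the predecessor $v\in V_2$ of $t_1^a$. Either $v\to s_1^k$, where $P_k$ contains the last intersection, and the $Q$-path can be rerouted; or $s_1^k\to v$, which gives the $(s_1,t_1)$-path $s_1s_1^kvt_1^at_1$. This path has three internal vertices, so your principle that ``one path is always left untouched'' no longer applies, and this is where the substantive work lies.
\begin{itemize}
\item Each $Q_j$ must meet exactly one of $s_1^k,v,t_1^a$. This leads to the ordering argument for $x_1,x_2,x_3$ on $P_{i'}$, the conclusion $x_3\in V_1$ with $x_3t_1\in A(D)$, and a final splice at the predecessor of $x_3$ on $Q_3$.
\item Separately, when $z_1,z_2,z_3$ lie on three different $P$-paths, one shows $z_1=t_1^1$, so no $V_2$-vertex follows it on $P_1$ except $t_1$. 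One then splices at the predecessor $v'\in V_2$ of $z_1$ on $Q_1$. Either this reroutes $Q_1$, or $s_1s_1^{i''}v'z_1t_1$ misses one of $Q_2,Q_3$, because $v'$ and $z_1$ both lie on $Q_1$.
\end{itemize}
None of this, nor any substitute for it, appears in your proposal.
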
 

\begin{proof}
On the contrary, suppose that
\begin{equation}\label{eq:contradiction-1}
\text{$D$ contains no vertex-disjoint
$(s_1,t_1)$- and $(s_2,t_2)$-paths.}
\end{equation}
Let $P_1,P_2,P_3$ be three internally disjoint $(s_1,t_1)$-paths in $D-\{s_2,t_2\}$ chosen to be minimal. Similarly, let $Q_1,Q_2,Q_3$ be three internally disjoint minimal $(s_2,t_2)$-paths in $D-\{s_1,t_1\}$. Clearly, each of \(P_i\), \(i \in \intthree\) intersects all of \(Q_j\) for \(j \in \intthree\). Let $S = V(P_1)\cup V(P_2)\cup V(P_3)$. For each $i\in \intthree$, let $z_i$ be the first vertex of $Q_i$ intersecting $S$ and $w_i$ the last such vertex. Furthermore, let \(s_1^i(s_2^i)\) be the successor of \(s_1(s_2)\) on \(P_i(Q_i)\) for \(i \in \intthree\), and let \(t_1^i(t_2^i)\) be the predecessor of \(t_1(t_2)\) on \(P_i(Q_i)\) for \(i \in \intthree\).%

\begin{claim}\label{claim:t_1^j in V_1}
If \(z_i\) lies on \(P_j\) for some \(i,j\in \intthree\), then \(t_1^j \in V_1\).
\end{claim}

\begin{proof}
Suppose, for contradiction, that $t_1^j \in V_2$. 
Let $w_i$ lie on $P_k$ for some $k\in \intthree$. Since $s_1\in V_1$, it follows that $s_1^k\in V_2$. If $t_1^j\to s_1^k$, then the $(s_2,t_2)$-path
\[
Q_i[s_2,z_i] \circ P_j[z_i,t_1^j] \circ t_1^js_1^k \circ P_k[s_1^k,w_i] \circ Q_i[w_i,t_2]
\]
is disjoint from $P_l$ for some $l\in \intthree\setminus\{j,k\}$, contradicting \eqref{eq:contradiction-1}. Consequently, we must have $s_1^k\to t_1^j$. However, the $(s_1,t_1)$-path $s_1s_1^kt_1^jt_1$ is disjoint from $Q_l$ for some $l\in \intthree$, which again contradicts \eqref{eq:contradiction-1}. 
\end{proof}

\vspace{2mm}
\noindent \mycase\label{Lemma:case1} There exist distinct indices \(i_1,j_1 \in \intthree\) such that \(z_{i_1}, z_{j_1} \in V(P_i)\) for some \(i \in \intthree\).
\vspace{2mm}

W.l.o.g., let \(i=1\). Let \(w_1\) lie on \(P_{k_1}\) for some \(k_1 \in \intthree\), and let \(v\) be the predecessor of \(t_1^1\) on \(P_1\). By Claim \ref{claim:t_1^j in V_1}, we have \(t_1^1 \in V_1\), so \(v \in V_2\). Since \(s_1 \in V_1\), it follows that \(s_1^{k_1} \in V_2\).
If \(v \to s_1^{k_1}\), then there exists an index \(i \in \{i_1,j_1\}\) such that the \((s_2,t_2)\)-path
\(
Q_i[s_2,z_i] \circ P_1[z_i,v] \circ vs_1^{k_1} \circ P_{k_1}[s_1^{k_1},w_1] \circ Q_1[w_1,t_2]
\)
is disjoint from \(P_j\) for some \(j \in \intthree \setminus \{1,k_1\}\). Consequently, we must have  \(s_1^{k_1} \to v\) {and then \(P = s_1 s_1^{k_1} v t_1^1 t_1\) is an \((s_1, t_1)\)-path.} (See Figure \ref{Figure-1}).

\begin{figure}[H]
    \centering
\tikzset{pics/manuscriptdirection/.style={code={
  \fill[black] (0pt,0pt) -- (-6pt,2pt) -- (-6pt,-2pt) -- cycle;
}}}

\begin{tikzpicture}[
    scale=0.8,
    transform shape,
    font=\large,
    line cap=round,
    line join=round,
    ppath/.style={draw=pathgray,line width=1.2pt},
    qpath/.style={draw=qgray,line width=1.2pt},
    selected/.style={draw=highlightdark,line width=1.2pt},
    arrow/.style={-{Triangle[length=6pt,width=3pt]},shorten >=0pt},
    starvertex/.style={shape=star,star points=5,star point ratio=2.35,
        draw=none,fill=starfill,minimum size=12pt,inner sep=0pt},
    diamondvertex/.style={diamond,draw=none,fill=diamondfill,
        minimum size=7.5pt,inner sep=0pt},
    blackvertex/.style={circle,draw=black,fill=black,
        minimum size=5.5pt,inner sep=0pt},
    lab/.style={inner sep=1pt}
]

\coordinate (s1)  at ( 0.00,6.50);
\coordinate (t1)  at ( 0.00,0.00);
\coordinate (s2)  at (-4.10,3.25);
\coordinate (t2)  at ( 3.75,3.25);
\coordinate (s11) at (-0.79,5.72);
\coordinate (s12) at ( 0.00,5.68);
\coordinate (s13) at ( 0.78,5.68);
\coordinate (zi)  at (-1.61,3.02);
\coordinate (zj)  at (-1.38,1.88);
\coordinate (v)   at (-1.05,1.17);
\coordinate (t11) at (-0.56,0.52);
\coordinate (w1)  at ( 1.59,3.32);

\draw[highlightgray,line width=4.5pt]
    (s1) .. controls (0.30,6.24) and (0.56,5.99) .. (s13)
    -- (v)
    .. controls (-0.92,0.93) and (-0.73,0.69) .. (t11)
    .. controls (-0.40,0.35) and (-0.19,0.16) .. (t1);

\draw[ppath,arrow]
    (s1) .. controls (-0.30,6.24) and (-0.55,6.00) .. (s11);
\draw[ppath]
    (s11) .. controls (-1.25,5.11) and (-1.72,3.98) .. (zi)
    pic[pos=0.50,sloped,allow upside down] {manuscriptdirection}
    .. controls (-1.61,2.61) and (-1.52,2.18) .. (zj)
    .. controls (-1.33,1.64) and (-1.17,1.33) .. (v);

\draw[ppath]
    (s13) .. controls (1.22,5.08) and (1.62,4.24) .. (w1)
    pic[pos=0.48,sloped,allow upside down] {manuscriptdirection}
    .. controls (1.69,1.88) and (0.96,0.76) .. (t1)
    pic[pos=0.80,sloped,allow upside down] {manuscriptdirection};

\draw[selected,arrow]
    (s1) .. controls (0.30,6.24) and (0.56,5.99) .. (s13);
\draw[selected,arrow] (s13) -- (v);
\draw[selected,arrow]
    (v) .. controls (-0.92,0.93) and (-0.73,0.69) .. (t11);
\draw[selected,arrow]
    (t11) .. controls (-0.40,0.35) and (-0.19,0.16) .. (t1);

\draw[ppath,arrow] (s1) -- (s12);
\draw[ppath] (s12) -- (t1)
    pic[pos=0.88,sloped,allow upside down] {manuscriptdirection};

\draw[qpath]
    (s2) .. controls (-3.10,3.68) and (-2.76,3.60) .. (zi)
    pic[pos=0.35,sloped,allow upside down] {manuscriptdirection};
    \draw[qpath]
    (s2) .. controls (-3.93,2.49) and (-3.68,2.36) ..
    (-2.95,2.43)
    pic[pos=0.90,sloped,allow upside down]
    {manuscriptdirection}
    .. controls (-2.28,2.51) and (-1.87,2.28) .. (zj);
\draw[qpath]
    (w1) .. controls (2.18,3.68) and (2.43,3.81) .. (2.75,3.11)
    .. controls (2.93,2.63) and (3.23,2.82) .. (t2)
    pic[pos=0.76,sloped,allow upside down] {manuscriptdirection};

\foreach \p in {s1,s2,t11}
    \node[starvertex] at (\p) {};
\foreach \p in {s11,s12,s13,v,t1,t2}
    \node[diamondvertex] at (\p) {};
\foreach \p in {zi,zj,w1}
    \node[blackvertex] at (\p) {};

\node[lab,above=5pt] at (s1) {$s_1$};
\node[lab,below=5pt] at (t1) {$t_1$};
\node[lab,left=8pt]  at (s2) {$s_2$};
\node[lab,right=5pt] at (t2) {$t_2$};
\node[lab,left=6pt]  at (s11) {$s_1^1$};
\node[lab,left=5pt]  at (s12) {$s_1^2$};
\node[lab,right=5pt] at (s13) {$s_1^3$};
\node[lab,left=5pt]  at (zi) {$z_{i_1}$};
\node[lab,left=5pt]  at (zj) {$z_{j_1}$};
\node[lab,left=5pt]  at (w1) {$w_1$};
\node[lab,left=5pt]  at (v) {$v$};
\node[lab,left=6pt,yshift=-2pt] at (t11) {$t_1^1$};

\node[lab] at (-0.65,1.00) {$P_1$};
\node[lab] at ( 0.24,1.00) {$P_2$};
\node[lab] at ( 1.35,0.98) {$P_3$};
\node[lab] at (-2.96,3.83) {$Q_{i_1}$};
\node[lab] at (-3.13,2.06) {$Q_{j_1}$};

\node[starvertex]  at (2.88,6.11) {};
\node[diamondvertex] at (2.88,5.59) {};
\node[lab,anchor=west] at (3.15,6.11) {$V_1$};
\node[lab,anchor=west] at (3.15,5.59) {$V_2$};

\end{tikzpicture}

\caption{Example with \({k_1=3}\). {The path $P$ is highlighted.} Stars represent vertices in \(V_1\), diamonds represent vertices in \(V_2\) and filled circles represent a vertex whose location in $V_1,V_2$ is not determined. This rule is used for all following figures (without further mentioning).}
\label{Figure-1}
\end{figure}

By \eqref{eq:contradiction-1}, each $Q_i$, $i\in [3]$ must intersect $P$. Thus, we may assume \(|V(Q_i) \cap \{t_1^1,v,s_1^{k_1}\}| = 1\) for all \(i \in \intthree\). %
W.l.o.g., {assume that} \(v \in V(Q_1)\), \(t_1^1 \in V(Q_2)\) and \(s_1^{k_1} \in V(Q_3)\). {We} now analyze the properties of several subpaths.

\begin{claim}\label{Claim:v(Q_1)[v,t_2] cap V(P_i) notempty}
\begin{align*}
& V(Q_1[v,t_2]) \cap V(P_i) \neq \emptyset \quad \text{for every } i\in \intthree \setminus \{1\},\\
& V(Q_2[t_1^1,t_2]) \cap V(P_i) \neq \emptyset \quad \text{for every } i\in \intthree \setminus \{1\},\\
& V(Q_3[s_2,s_1^{k_1}]) \cap V(P_i) \neq \emptyset \quad \text{for every } i\in \intthree \setminus \{k_1\}.
\end{align*}
\end{claim}

\begin{proof}
Suppose that there exists some \(i \in \intthree \setminus \{1\}\) satisfying
\(
V(Q_1[v,t_2]) \cap V(P_i) = \emptyset.
\)
Then there exists \(j \in \{i_1,j_1\}\) such that the \((s_2,t_2)\)-path
\(
Q_j[s_2,z_j] \circ P_1[z_j,v] \circ Q_1[v,t_2]
\)
is disjoint from \(P_i\). This contradicts \eqref{eq:contradiction-1}.
Hence, \(V(Q_1[v,t_2]) \cap V(P_i) \neq \emptyset\) for all \(i\in \intthree \setminus \{1\}\).

{An identical argument with $v$ replaced by $t_1^1$ shows that } %
\(V(Q_2[t_1^1,t_2]) \cap V(P_i) \neq \emptyset\) for all \(i\in \intthree \setminus \{1\}\).
Finally, assume that there exists some \(i \in \intthree \setminus \{k_1\}\) with
\(
V(Q_3[s_2,s_1^{k_1}]) \cap V(P_i) = \emptyset.
\)
Then the \((s_2,t_2)\)-path
\(
Q_3[s_2,s_1^{k_1}] \circ P_{k_1}[s_1^{k_1},w_1] \circ Q_1[w_1,t_2]
\)
is disjoint from \(P_i\), contradicting \eqref{eq:contradiction-1}.
We conclude that \(V(Q_3[s_2,s_1^{k_1}]) \cap V(P_i) \neq \emptyset\) for all \(i\in \intthree \setminus \{k_1\}\).
\end{proof}

{Let $i'$ be an index in $\{2,3\}-\{k_1\}$}. By Claim \ref{Claim:v(Q_1)[v,t_2] cap V(P_i) notempty}, %
{we can choose vertices \(x_1\), \(x_2\) and \(x_3\)} such that
\[
x_1 \in V(Q_1[v,t_2]) \cap V(P_{i'}),\;
x_2 \in V(Q_2[t_1^1,t_2]) \cap V(P_{i'}),\;
x_3 \in V(Q_3[s_2,s_1^{k_1}]) \cap V(P_{i'}).
\]
We now {determine}  the order of vertices \(x_1,x_2,x_3\) along the path \(P_{i'}\).
If there exists an index \({a} \in \{1,2\}\) %
such that \(|P_{i'}[s_1,x_{{a}}]| > |P_{i'}[s_1,x_3]|\), then the \((s_2,t_2)\)-path \(Q_3[s_2,x_3] \circ P_{i'}[x_3,x_{{a}}] \circ Q_{{a}}[x_{{a}},t_2]\) is disjoint from \(P\), contradicting \eqref{eq:contradiction-1}, so we conclude 

\[
\max\{|P_{i'}[s_1,x_1]|, |P_{i'}[s_1,x_2]|\} < |P_{i'}[s_1,x_3]|.
\]
For convenience, we assume that \(|P_{i'}[s_1,x_1]| < |P_{i'}[s_1,x_2]|\). Thus, we obtain \(
|P_{i'}[s_1,x_1]| < |P_{i'}[s_1,x_2]| < |P_{i'}[s_1,x_3]|.\)

\begin{claim}\label{claim:x_3 in V_1}
\(x_3 \in V_1\) and \(P_{i'}[x_3,t_1]=x_3t_1\).
\end{claim}

\begin{proof}
Suppose \(x_3 \notin V_1\), so \(x_3 \in V_2\). Since \(s_1^{i'} \in V_2\) and by the minimality of \(P_{i'}\), we have \(x_3 \to s_1^{i'}\).
Consider the \((s_2,t_2)\)-path
\(
Q_3[s_2,x_3] \circ x_3s_1^{i'} \circ P_{i'}[s_1^{i'},x_1] \circ Q_1[x_1,t_2].
\)
This path is disjoint from \(P\), which contradicts \eqref{eq:contradiction-1} (See Figure \ref{fig2a:lemma}).
Hence, \(x_3 \in V_1\). Now suppose \(|P_{i'}[x_3,t_1]| \ge 3\). Let \(x_3^+\) denote the successor of \(x_3\) on \(P_{i'}[x_3,t_1]\); clearly \(x_3^+ \in V_2\). By the minimality of \(P_{i'}\), we obtain \(x_3^+ \to s_1^{i'}\).
Then the \((s_2,t_2)\)-path
\[
Q_3[s_2,x_3] \circ x_3x_3^+ \circ x_3^+s_1^{i'} \circ P_{i'}[s_1^{i'},x_1] \circ Q_1[x_1,t_2]
\]
is disjoint from \(P\), contradicting \eqref{eq:contradiction-1} (See Figure \ref{fig2b:lemma}).
Consequently, \(P_{i'}[x_3,t_1]\) is exactly the arc \(x_3t_1\).
\end{proof}

\begin{figure}[H]
\centering
\tikzset{pics/manuscriptdirection/.style={code={
  \fill[black] (0pt,0pt) -- (-6pt,2pt) -- (-6pt,-2pt) -- cycle;
}}}

\begin{subfigure}{0.45\textwidth}
\centering
\resizebox{\linewidth}{!}{%
\begin{tikzpicture}[
    scale=0.8,                 
    transform shape,
    font=\large,
    line cap=round,
    line join=round,
    pathline/.style={draw=black,line width=0.8pt},
    solidroute/.style={draw=black,line width=1.4pt,solid},
    dashedroute/.style={draw=black,line width=1.2pt,
        dash pattern=on 4pt off 2.5pt,line cap=butt},
    arrow/.style={-{Triangle[length=6pt,width=3pt,line width=0pt]},shorten >=0pt},
    starvertex/.style={shape=star,star points=5,star point ratio=2.35,
        draw=none,fill=black,minimum size=12pt,inner sep=0pt,outer sep=0pt},
    diamondvertex/.style={diamond,draw=none,fill=black,
        minimum size=7.5pt,inner sep=0pt,outer sep=0pt},
    blackvertex/.style={circle,draw=none,fill=black,
        minimum size=5.5pt,inner sep=0pt,outer sep=0pt},
    lab/.style={inner sep=1pt}
]

\node[starvertex]    (s1)  at ( 0.00,6.50) {};
\node[diamondvertex] (t1)  at ( 0.00,0.00) {};
\node[starvertex]    (s2)  at (-4.10,3.25) {};
\node[diamondvertex] (t2)  at ( 3.75,3.25) {};
\node[diamondvertex] (s11) at (-0.79,5.72) {};
\node[diamondvertex] (s12) at ( 0.00,5.68) {};
\node[diamondvertex] (s13) at ( 0.78,5.68) {};
\node[diamondvertex] (v)   at (-1.05,1.17) {};
\node[starvertex]    (t11) at (-0.56,0.52) {};
\node[blackvertex]   (x1)  at ( 0.00,5.14) {};
\node[blackvertex]   (x2)  at ( 0.00,4.38) {};
\node[diamondvertex] (x3) at (0.00,1.74) {};

\begin{scope}[on background layer]

\draw[pathline,arrow]
    (s1) .. controls (-0.30,6.24) and (-0.55,6.00) .. (s11);
\draw[pathline]
    (s11) .. controls (-1.85,4.24) and (-1.92,2.33) .. (v)
    pic[pos=0.42,sloped,allow upside down] {manuscriptdirection};
\draw[pathline,arrow] (s1) -- (s12);
\draw[dashedroute,arrow] (s12) -- (x1);
\draw[pathline] (x1) -- (t1)
    pic[pos=0.88,sloped,allow upside down] {manuscriptdirection};
\draw[pathline]
    (s13) .. controls (2.12,3.75) and (1.76,1.46) .. (t1)
    pic[pos=0.82,sloped,allow upside down] {manuscriptdirection};

\draw[pathline]
    (s2) .. controls (-3.38,3.38) and (-2.44,3.59) .. (-2.21,2.95)
    pic[pos=0.36,sloped,allow upside down] {manuscriptdirection}
    .. controls (-2.03,2.25) and (-1.75,1.66) .. (v);
\draw[pathline]
    (v) .. controls (-0.94,2.09) and (-1.63,3.39) .. (-1.01,4.35)
    pic[pos=0.55,sloped,allow upside down] {manuscriptdirection}
    .. controls (-0.67,4.88) and (-0.25,5.16) .. (x1);
\draw[dashedroute,arrow] (x1)
    .. controls (0.64,5.10) and (1.27,3.58) .. (2.04,3.04)
    .. controls (2.48,2.70) and (2.91,2.79) .. (t2);

\draw[pathline]
    (s2) .. controls (-3.03,2.32) and (-2.95,0.78) .. (-1.99,0.37)
    pic[pos=0.45,sloped,allow upside down] {manuscriptdirection}
    .. controls (-1.51,0.15) and (-1.02,0.19) .. (t11);
\draw[pathline]
    (t11) .. controls (0.12,1.14) and (-0.92,2.65) .. (-0.64,3.39)
    pic[pos=0.55,sloped,allow upside down] {manuscriptdirection}
    .. controls (-0.50,3.87) and (-0.22,4.32) .. (x2);
\draw[pathline]
    (x2) .. controls (0.63,4.37) and (0.47,2.37) .. (1.34,1.88)
    pic[pos=0.52,sloped,allow upside down] {manuscriptdirection}
    .. controls (2.46,1.10) and (3.30,1.84) .. (t2)
    pic[pos=0.80,sloped,allow upside down] {manuscriptdirection};

\draw[dashedroute,arrow] (s2)
    .. controls (-3.47,4.91) and (-2.66,5.14) .. (-1.90,4.55)
    pic[pos=0.35,sloped,allow upside down] {manuscriptdirection}
    .. controls (-1.12,3.91) and (-0.93,2.03) .. (x3);
\draw[pathline]
    (x3) .. controls (1.76,1.91) and (0.46,5.27) .. (s13)
    pic[pos=0.55,sloped,allow upside down] {manuscriptdirection};
\draw[pathline]
    (s13) .. controls (1.51,5.71) and (2.11,4.90) .. (2.42,4.34)
    .. controls (2.73,3.82) and (3.34,3.49) .. (t2)
    pic[pos=0.75,sloped,allow upside down] {manuscriptdirection};


\draw[dashedroute,arrow] (x3) .. controls (0.73,3.11) and (0.50,4.68) .. (s12);

\draw[solidroute,arrow] (s1) .. controls (0.30,6.24) and (0.56,5.99) .. (s13);
\draw[solidroute,arrow] (s13) -- (v);
\draw[solidroute,arrow]
    (v) .. controls (-0.92,0.93) and (-0.73,0.69) .. (t11);
\draw[solidroute,arrow]
    (t11) .. controls (-0.40,0.35) and (-0.19,0.16) .. (t1);
\end{scope}

\node[lab,above=5pt] at (s1.north) {$s_1$};
\node[lab,below=5pt] at (t1.south) {$t_1$};
\node[lab,left=7pt] at (s2.west) {$s_2$};
\node[lab,right=5pt] at (t2.east) {$t_2$};
\node[lab,left=5pt,yshift=3pt] at (s11.center) {$s_1^1$};
\node[lab,left=2pt,yshift=3pt] at (s12.center) {$s_1^2$};
\node[lab,right=5pt,yshift=8pt] at (s13.center) {$s_1^3$};
\node[lab,left=5pt,yshift=2pt] at (x1.west) {$x_1$};
\node[lab,left=5pt,yshift=2pt] at (x2.west) {$x_2$};
\node[lab,left=5pt,yshift=-2pt] at (v.west) {$v$};
\node[lab] at (-0.70,0.06) {$t_1^1$};

\node[lab] at (-2.96,3.05) {$Q_1$};
\node[lab] at (-3.45,1.65) {$Q_2$};
\node[lab] at (-2.64,4.48) {$Q_3$};

\node[starvertex] at (2.88,6.11) {};
\node[diamondvertex] at (2.88,5.59) {};
\node[lab,anchor=west] at (3.15,6.11) {$V_1$};
\node[lab,anchor=west] at (3.15,5.59) {$V_2$};

\node[lab,right=5pt,yshift=-3pt] at (x3.east) {$x_3$};
\node[lab] at (-0.60,1.00) {$P_1$};
\node[lab] at ( 0.24,1.00) {$P_2$};
\node[lab] at ( 1.28,0.98) {$P_3$};
\end{tikzpicture}%
}
\caption{}
\label{fig2a:lemma}
\end{subfigure}
\hspace{4mm}
\begin{subfigure}{0.45\textwidth}
\centering
\resizebox{\linewidth}{!}{%
\begin{tikzpicture}[
    scale=0.8,                 
    transform shape,
    font=\large,
    line cap=round,
    line join=round,
    pathline/.style={draw=black,line width=0.8pt},
    solidroute/.style={draw=black,line width=1.4pt,solid},
    dashedroute/.style={draw=black,line width=1.2pt,
        dash pattern=on 4pt off 2.5pt,line cap=butt},
    arrow/.style={-{Triangle[length=6pt,width=3pt,line width=0pt]},shorten >=0pt},
    starvertex/.style={shape=star,star points=5,star point ratio=2.35,
        draw=none,fill=black,minimum size=12pt,inner sep=0pt,outer sep=0pt},
    diamondvertex/.style={diamond,draw=none,fill=black,
        minimum size=7.5pt,inner sep=0pt,outer sep=0pt},
    blackvertex/.style={circle,draw=none,fill=black,
        minimum size=5.5pt,inner sep=0pt,outer sep=0pt},
    lab/.style={inner sep=1pt}
]

\node[starvertex]    (s1)  at ( 0.00,6.50) {};
\node[diamondvertex] (t1)  at ( 0.00,0.00) {};
\node[starvertex]    (s2)  at (-4.10,3.25) {};
\node[diamondvertex] (t2)  at ( 3.75,3.25) {};
\node[diamondvertex] (s11) at (-0.79,5.72) {};
\node[diamondvertex] (s12) at ( 0.00,5.68) {};
\node[diamondvertex] (s13) at ( 0.78,5.68) {};
\node[diamondvertex] (v)   at (-1.05,1.17) {};
\node[starvertex]    (t11) at (-0.56,0.52) {};
\node[blackvertex]   (x1)  at ( 0.00,5.14) {};
\node[blackvertex]   (x2)  at ( 0.00,4.38) {};
\node[starvertex] (x3) at (0.00,1.68) {};
\node[diamondvertex] (x3plus) at (0.00,1.03) {};

\begin{scope}[on background layer]

\draw[pathline,arrow]
    (s1) .. controls (-0.30,6.24) and (-0.55,6.00) .. (s11);
\draw[pathline]
    (s11) .. controls (-1.85,4.24) and (-1.92,2.33) .. (v)
    pic[pos=0.42,sloped,allow upside down] {manuscriptdirection};
\draw[pathline,arrow] (s1) -- (s12);
\draw[dashedroute,arrow] (s12) -- (x1);
\draw[pathline] (x1) -- (x3)
    pic[pos=0.53,sloped,allow upside down] {manuscriptdirection};
\draw[pathline] (x3plus) -- (t1)
    pic[pos=0.70,sloped,allow upside down] {manuscriptdirection};
\draw[pathline]
    (s13) .. controls (2.12,3.75) and (1.76,1.46) .. (t1)
    pic[pos=0.82,sloped,allow upside down] {manuscriptdirection};

\draw[pathline]
    (s2) .. controls (-3.38,3.38) and (-2.44,3.59) .. (-2.21,2.95)
    pic[pos=0.36,sloped,allow upside down] {manuscriptdirection}
    .. controls (-2.03,2.25) and (-1.75,1.66) .. (v);
\draw[pathline]
    (v) .. controls (-0.94,2.09) and (-1.63,3.39) .. (-1.01,4.35)
    pic[pos=0.55,sloped,allow upside down] {manuscriptdirection}
    .. controls (-0.67,4.88) and (-0.25,5.16) .. (x1);
\draw[dashedroute,arrow] (x1)
    .. controls (0.64,5.10) and (1.27,3.58) .. (2.04,3.04)
    .. controls (2.48,2.70) and (2.91,2.79) .. (t2);

\draw[pathline]
    (s2) .. controls (-3.03,2.32) and (-2.95,0.78) .. (-1.99,0.37)
    pic[pos=0.45,sloped,allow upside down] {manuscriptdirection}
    .. controls (-1.51,0.15) and (-1.02,0.19) .. (t11);
\draw[pathline]
    (t11) .. controls (0.12,1.14) and (-0.92,2.65) .. (-0.64,3.39)
    pic[pos=0.55,sloped,allow upside down] {manuscriptdirection}
    .. controls (-0.50,3.87) and (-0.22,4.32) .. (x2);
\draw[pathline]
    (x2) .. controls (0.63,4.37) and (0.47,2.37) .. (1.34,1.88)
    pic[pos=0.52,sloped,allow upside down] {manuscriptdirection}
    .. controls (2.46,1.10) and (3.30,1.84) .. (t2)
    pic[pos=0.80,sloped,allow upside down] {manuscriptdirection};

\draw[dashedroute,arrow] (s2)
    .. controls (-3.47,4.91) and (-2.66,5.14) .. (-1.90,4.55)
    pic[pos=0.35,sloped,allow upside down] {manuscriptdirection}
    .. controls (-1.12,3.91) and (-0.93,2.03) .. (x3);
\draw[pathline]
    (x3) .. controls (1.76,1.91) and (0.46,5.27) .. (s13)
    pic[pos=0.55,sloped,allow upside down] {manuscriptdirection};
\draw[pathline]
    (s13) .. controls (1.51,5.71) and (2.11,4.90) .. (2.42,4.34)
    .. controls (2.73,3.82) and (3.34,3.49) .. (t2)
    pic[pos=0.75,sloped,allow upside down] {manuscriptdirection};

\draw[dashedroute,arrow] (x3) -- (x3plus);
\draw[dashedroute,arrow] (x3plus) .. controls (1.06,2.53) and (0.77,4.39) .. (s12);

\draw[solidroute,arrow] (s1) .. controls (0.30,6.24) and (0.56,5.99) .. (s13);
\draw[solidroute,arrow] (s13) -- (v);
\draw[solidroute,arrow]
    (v) .. controls (-0.92,0.93) and (-0.73,0.69) .. (t11);
\draw[solidroute,arrow]
    (t11) .. controls (-0.40,0.35) and (-0.19,0.16) .. (t1);
\end{scope}

\node[lab,above=5pt] at (s1.north) {$s_1$};
\node[lab,below=5pt] at (t1.south) {$t_1$};
\node[lab,left=7pt] at (s2.west) {$s_2$};
\node[lab,right=5pt] at (t2.east) {$t_2$};
\node[lab,left=5pt,yshift=3pt] at (s11.center) {$s_1^1$};
\node[lab,left=2pt,yshift=3pt] at (s12.center) {$s_1^2$};
\node[lab,right=5pt,yshift=8pt] at (s13.center) {$s_1^3$};
\node[lab,left=5pt,yshift=2pt] at (x1.west) {$x_1$};
\node[lab,left=5pt,yshift=2pt] at (x2.west) {$x_2$};
\node[lab,left=5pt,yshift=-2pt] at (v.west) {$v$};
\node[lab] at (-0.70,0.06) {$t_1^1$};

\node[lab] at (-2.96,3.05) {$Q_1$};
\node[lab] at (-3.45,1.65) {$Q_2$};
\node[lab] at (-2.64,4.48) {$Q_3$};

\node[starvertex] at (2.88,6.11) {};
\node[diamondvertex] at (2.88,5.59) {};
\node[lab,anchor=west] at (3.15,6.11) {$V_1$};
\node[lab,anchor=west] at (3.15,5.59) {$V_2$};

\node[lab] at (-0.28,2.1) {$x_3$};
\node[lab,right=4pt,yshift=-3pt] at (x3plus.east) {$x_3^+$};
\node[lab] at (-1.12,-0.40) {$P_1$};
\node[lab] at ( 0.24,2.54) {$P_2$};
\node[lab] at ( 1.04,-0.40) {$P_3$};
\end{tikzpicture}%
}
\caption{}
\label{fig2b:lemma}
\end{subfigure}
\caption{Example with \(k_1 = 3\) and \(i' = 2\): (a) \(x_3 \in V_2\); (b) \(x_3 \in V_1\) and \(|P_{i'}[x_3,t_1]| \geq 3\).}
\label{fig2:lemma}
\end{figure}

 Suppose for contradiction that there exists another vertex \(x_3'\in V(Q_3[s_2,x_3])\cap V(P_{i'})\). {By Claim \ref{claim:x_3 in V_1} we must have} \(|P_{i'}[s_1,x_3']| < |P_{i'}[s_1,x_3]|\). {However, since we could have chosen $x_3'$ instead of $x_3$, we obtain a contradiction to Claim \ref{claim:x_3 in V_1}}. Hence $x_3$ is the only vertex of $P_{i'}$ that is also on $V(Q_3[s_2,x_3])$.

Let \(x_3^-\) denote the predecessor of \(x_3\) along \(Q_3[s_2,x_3]\). Consequently, \(x_3^- \in V_2\). Recall that \(s_1^{k_1} \in V_2\).
If \(s_1^{k_1} \to x_3^-\), then the \((s_1,t_1)\)-path \(s_1 s_1^{k_1}  x_3^-  x_3  t_1\) is disjoint from \(Q_i\) for all \(i \in \inttwo\). Hence \(x_3^- \to s_1^{k_1}\), and now the \((s_2,t_2)\)-path
\[
Q_3[s_2,x_3^-] \circ x_3^-s_1^{k_1} \circ P_{k_1}[s_1^{k_1},w_1] \circ Q_1[w_1,t_2]
\]
is disjoint from $P_{i'}$ (recall that we showed above that $x_3$ is the only vertex of $P_{i'}$ that is also on $V(Q_3[s_2,x_3])$).

\vspace{2mm}
\noindent \mycase\; Precisely one of the vertices \(z_1,z_2,z_3\) belong to $P_r$ for each $r\in [3]$.
\vspace{2mm}

W.l.o.g., we assume \(z_1 \in V(P_1)\), \(z_2 \in V(P_2)\) and \(z_3 \in V(P_3)\). By Claim \ref{claim:t_1^j in V_1}, \(t_1^1,t_1^2,t_1^3 \in V_1\). Let \(w_1 \in V(P_{i''})\) for some \(i'' \in \intthree\), and let \(v\) be the predecessor of \(t_1^1\) on \(P_1\).

\begin{claim}\label{claim:z_1=t_1^1}
    \(z_1 = t_1^1\).
\end{claim}

\begin{proof}
    Suppose for contradiction that \(z_1 \neq t_1^1\). Since \(t_1^1, s_1 \in V_1\), we clearly have \(v, s_1^{i''} \in V_2\). {We must have \(s_1^{i''} \to v\) since otherwise} the \((s_2,t_2)\)-path
    \(
    Q_1[s_2,z_1] \circ P_1[z_1,v] \circ vs_1^{i''} \circ P_{i''}[s_1^{i''},w_1] \circ Q_1[w_1,t_2]
    \)
    is disjoint from \(P_i\) for some \(i \in \intthree \setminus \{1,i''\}\).  Define the \((s_1,t_1)\)-path
    \[
    {P^*} = s_1 s_1^{i''} v t_1^1 t_1.
    \]
    The argument proceeds similarly to the proof of Case \ref{Lemma:case1}. Replacing \(i''\) with \(k_1\) transforms the {path $P^*$ above } into the corresponding path {$P$} in Case \ref{Lemma:case1}. This again yields a contradiction to \eqref{eq:contradiction-1}, so we conclude \(z_1 = t_1^1\).
       Note that in the proof in Case \ref{Lemma:case1} we  did exploit that there were two of the vertices $z_1,z_2,z_3$ on the same path but that was only used to make sure one of these vertices could reach  the vertex $v$ along $P_1$. By the assumption that $z_1\neq t_1^1$, the vertex $z_1$ can, in fact, reach $v$ along $P_1$ and now the remaining argument is the same as in the proof of Case \ref{Lemma:case1}.

\end{proof}

By Claim \ref{claim:z_1=t_1^1}, we have \(z_1 = t_1^1 \in V_1\). {Let } \(v' \in V_2\) be the predecessor of \(z_1\) along \(Q_1[s_2,z_1]\). {As  \(s_1^{i''} \in V_2\) it is adjacent to $v'$.}  If \(v' \to s_1^{i''}\), then the \((s_2,t_2)\)-path
\(
Q_1[s_2,v'] \circ v's_1^{i''} \circ P_{i''}[s_1^{i''},w_1] \circ Q_1[w_1,t_2]
\)
is disjoint from \(P_i\) for all \(i \in \intthree \setminus \{i''\}\) (See Figure \ref{fig3a:lemma}). {Hence we must have} \(s_1^{i''} \to v'\). {Now we obtain the final contradiction by observing that} the \((s_1,t_1)\)-path
\(
s_1 s_1^{i''} v' z_1 t_1
\)
is disjoint from \(Q_j\) for some \(j \in \{2,3\}\) (See Figure \ref{fig3b:lemma}).

\end{proof}

\begin{figure}[H]
\centering
\tikzset{pics/manuscriptdirection/.style={code={
  \fill[black] (0pt,0pt) -- (-6pt,2pt) -- (-6pt,-2pt) -- cycle;
}}}

\begin{subfigure}{0.45\textwidth}
\centering
\resizebox{\linewidth}{!}{%
\begin{tikzpicture}[
    font=\large,
    line cap=round,
    line join=round,
    pathline/.style={draw=black,line width=0.8pt},
    solidroute/.style={draw=black,line width=1.4pt,solid},
    dashedroute/.style={draw=black,line width=1.2pt,
        dash pattern=on 4pt off 2.5pt,line cap=butt},
    arrow/.style={-{Triangle[length=6pt,width=3pt,line width=0pt]},shorten >=0pt},
    starvertex/.style={shape=star,star points=5,star point ratio=2.35,
        draw=none,fill=black,minimum size=12pt,inner sep=0pt,outer sep=0pt},
    diamondvertex/.style={diamond,draw=none,fill=black,
        minimum size=7.5pt,inner sep=0pt,outer sep=0pt},
    blackvertex/.style={circle,draw=none,fill=black,
        minimum size=5.5pt,inner sep=0pt,outer sep=0pt},
    lab/.style={inner sep=1pt}
]
\node[starvertex]    (s1)   at ( 0.00,6.50) {};
\node[diamondvertex] (t1)   at ( 0.00,0.00) {};
\node[starvertex]    (s2)   at (-4.10,3.25) {};
\node[diamondvertex] (t2)   at ( 3.75,3.25) {};
\node[diamondvertex] (s11)  at (-0.79,5.72) {};
\node[diamondvertex] (s12)  at ( 0.00,5.68) {};
\node[diamondvertex] (s13)  at ( 0.78,5.68) {};
\node[diamondvertex] (v)    at (-1.00,1.08) {};
\node[diamondvertex] (vp)   at (-2.28,1.08) {};
\node[starvertex]    (z1)   at (-0.56,0.53) {};
\node[diamondvertex] (p2d)  at ( 0.00,1.14) {};
\node[starvertex]    (p2s)  at ( 0.00,0.58) {};
\node[diamondvertex] (p3d)  at ( 0.90,0.98) {};
\node[starvertex]    (p3s)  at ( 0.58,0.58) {};
\node[blackvertex]   (w1)   at ( 1.59,3.36) {};

\draw[pathline,arrow] (s1) -- (s12);
\draw[pathline] (s12) -- (p2d)
    pic[pos=0.70,sloped,allow upside down] {manuscriptdirection};
\draw[pathline] (p2d) -- (p2s);
\draw[pathline] (p2s) -- (t1)
    pic[pos=0.90,sloped,allow upside down] {manuscriptdirection};
\draw[pathline,arrow]
    (s1) .. controls (0.30,6.24) and (0.56,5.99) .. (s13);
\draw[pathline]
    (w1) .. controls (1.59,2.35) and (1.30,1.52) .. (p3d)
    pic[pos=0.70,sloped,allow upside down] {manuscriptdirection};
\draw[pathline]
    (p3d) .. controls (0.78,0.79) and (0.69,0.67) .. (p3s);
\draw[pathline]
    (p3s) .. controls (0.39,0.33) and (0.19,0.16) .. (t1)
    pic[pos=0.80,sloped,allow upside down] {manuscriptdirection};

\draw[pathline,arrow]
    (vp) .. controls (-1.85,0.66) and (-1.32,0.59) .. (z1);

\draw[dashedroute,arrow]
    (s2) .. controls (-2.66,2.69) and (-2.77,1.57) .. (vp)
    pic[pos=0.34,sloped,allow upside down] {manuscriptdirection};
\draw[dashedroute,arrow] (vp) -- (s13);
\draw[dashedroute,arrow]
    (s13) .. controls (1.36,4.91) and (1.57,4.14) .. (w1);
\draw[dashedroute,arrow]
    (w1) .. controls (2.25,3.91) and (2.38,4.08) .. (2.80,2.93)
    .. controls (3.03,2.36) and (3.43,2.91) .. (t2);

\draw[solidroute,arrow]
    (s1) .. controls (-0.30,6.24) and (-0.55,6.00) .. (s11);
\draw[solidroute,arrow]
    (s11) .. controls (-1.95,4.18) and (-1.91,2.61) .. (v);
\draw[solidroute,arrow]
    (v) .. controls (-0.86,0.89) and (-0.72,0.70) .. (z1);
\draw[solidroute,arrow]
    (z1) .. controls (-0.40,0.35) and (-0.19,0.16) .. (t1);

\node[lab,above=5pt] at (s1.north) {$s_1$};
\node[lab,below=5pt] at (t1.south) {$t_1$};
\node[lab,left=7pt] at (s2.west) {$s_2$};
\node[lab,right=5pt] at (t2.east) {$t_2$};
\node[lab,left=5pt,yshift=1pt] at (s11.center) {$s_1^1$};
\node[lab,left=5pt,yshift=-1pt] at (s12.center) {$s_1^2$};
\node[lab,right=5pt,yshift=1pt] at (s13.center) {$s_1^3$};
\node[lab,left=5pt] at (v.west) {$v$};
\node[lab,below left=3pt] at (vp.south west) {$v'$};
\node[lab,left=5pt] at (w1.west) {$w_1$};
\node[lab] at (-1.08,0.10) {$z_1=t_1^1$};
\node[lab] at (-1.78,4.83) {$P_1$};
\node[lab] at (-0.34,4.90) {$P_2$};
\node[lab] at ( 1.74,4.86) {$P_3$};
\node[lab] at (-3.12,1.90) {$Q_1$};

\node[starvertex] at (2.88,6.11) {};
\node[diamondvertex] at (2.88,5.59) {};
\node[lab,anchor=west] at (3.15,6.11) {$V_1$};
\node[lab,anchor=west] at (3.15,5.59) {$V_2$};
\end{tikzpicture}%
}
\caption{}
\label{fig3a:lemma}
\end{subfigure}
\hspace{4mm}
\begin{subfigure}{0.45\textwidth}
\centering
\resizebox{\linewidth}{!}{%
\begin{tikzpicture}[
    font=\large,
    line cap=round,
    line join=round,
    pathline/.style={draw=black,line width=0.8pt},
    solidroute/.style={draw=black,line width=1.4pt,solid},
    dashedroute/.style={draw=black,line width=1.2pt,
        dash pattern=on 4pt off 2.5pt,line cap=butt},
    arrow/.style={-{Triangle[length=6pt,width=3pt,line width=0pt]},shorten >=0pt},
    starvertex/.style={shape=star,star points=5,star point ratio=2.35,
        draw=none,fill=black,minimum size=12pt,inner sep=0pt,outer sep=0pt},
    diamondvertex/.style={diamond,draw=none,fill=black,
        minimum size=7.5pt,inner sep=0pt,outer sep=0pt},
    blackvertex/.style={circle,draw=none,fill=black,
        minimum size=5.5pt,inner sep=0pt,outer sep=0pt},
    lab/.style={inner sep=1pt}
]
\node[starvertex]    (s1)   at ( 0.00,6.50) {};
\node[diamondvertex] (t1)   at ( 0.00,0.00) {};
\node[starvertex]    (s2)   at (-4.10,3.25) {};
\node[diamondvertex] (t2)   at ( 3.75,3.25) {};
\node[diamondvertex] (s11)  at (-0.79,5.72) {};
\node[diamondvertex] (s12)  at ( 0.00,5.68) {};
\node[diamondvertex] (s13)  at ( 0.78,5.68) {};
\node[diamondvertex] (v)    at (-1.00,1.08) {};
\node[diamondvertex] (vp)   at (-2.28,1.08) {};
\node[starvertex]    (z1)   at (-0.56,0.53) {};
\node[diamondvertex] (p2d)  at ( 0.00,1.14) {};
\node[starvertex]    (p2s)  at ( 0.00,0.58) {};
\node[diamondvertex] (p3d)  at ( 0.90,0.98) {};
\node[starvertex]    (p3s)  at ( 0.58,0.58) {};
\node[blackvertex]   (w1)   at ( 1.59,3.36) {};

\draw[pathline,arrow]
    (s1) .. controls (-0.30,6.24) and (-0.55,6.00) .. (s11);
\draw[pathline]
    (s11) .. controls (-1.95,4.18) and (-1.91,2.61) .. (v)
    pic[pos=0.45,sloped,allow upside down] {manuscriptdirection};
\draw[pathline,arrow]
    (v) .. controls (-0.86,0.89) and (-0.72,0.70) .. (z1);
\draw[pathline,arrow] (s1) -- (s12);
\draw[pathline] (s12) -- (p2d)
    pic[pos=0.70,sloped,allow upside down] {manuscriptdirection};
\draw[pathline] (p2d) -- (p2s);
\draw[pathline] (p2s) -- (t1)
    pic[pos=0.90,sloped,allow upside down] {manuscriptdirection};
\draw[pathline]
    (s13) .. controls (1.36,4.91) and (1.57,4.14) .. (w1)
    pic[pos=0.50,sloped,allow upside down] {manuscriptdirection};
\draw[pathline]
    (w1) .. controls (1.59,2.35) and (1.30,1.52) .. (p3d)
    pic[pos=0.70,sloped,allow upside down] {manuscriptdirection};
\draw[pathline]
    (p3d) .. controls (0.78,0.79) and (0.69,0.67) .. (p3s);
\draw[pathline]
    (p3s) .. controls (0.39,0.33) and (0.19,0.16) .. (t1)
    pic[pos=0.80,sloped,allow upside down] {manuscriptdirection};

\draw[pathline]
    (s2) .. controls (-2.66,2.69) and (-2.77,1.57) .. (vp)
    pic[pos=0.34,sloped,allow upside down] {manuscriptdirection};
\draw[pathline]
    (w1) .. controls (2.25,3.91) and (2.38,4.08) .. (2.80,2.93)
    .. controls (3.03,2.36) and (3.43,2.91) .. (t2)
    pic[pos=0.80,sloped,allow upside down] {manuscriptdirection};

\draw[dashedroute,arrow]
    (s2) .. controls (-2.44,3.86) and (-2.13,3.96) .. (-1.10,2.95)
    pic[pos=0.30,sloped,allow upside down] {manuscriptdirection}
    .. controls (-0.37,2.14) and (0.12,3.15) .. (0.91,3.88)
    .. controls (1.65,4.65) and (2.47,4.19) .. (t2);

\draw[solidroute,arrow]
    (s1) .. controls (0.30,6.24) and (0.56,5.99) .. (s13);
\draw[solidroute,arrow] (s13) -- (vp);
\draw[solidroute,arrow]
    (vp) .. controls (-1.85,0.66) and (-1.32,0.59) .. (z1);
\draw[solidroute,arrow]
    (z1) .. controls (-0.40,0.35) and (-0.19,0.16) .. (t1);

\node[lab,above=5pt] at (s1.north) {$s_1$};
\node[lab,below=5pt] at (t1.south) {$t_1$};
\node[lab,left=7pt] at (s2.west) {$s_2$};
\node[lab,right=5pt] at (t2.east) {$t_2$};
\node[lab,left=5pt,yshift=1pt] at (s11.center) {$s_1^1$};
\node[lab,left=5pt,yshift=-1pt] at (s12.center) {$s_1^2$};
\node[lab,right=5pt,yshift=1pt] at (s13.center) {$s_1^3$};
\node[lab,left=5pt] at (v.west) {$v$};
\node[lab,below left=3pt] at (vp.south west) {$v'$};
\node[lab,left=5pt] at (w1.west) {$w_1$};
\node[lab] at (-1.08,0.10) {$z_1=t_1^1$};
\node[lab] at (-1.78,4.83) {$P_1$};
\node[lab] at (-0.34,4.90) {$P_2$};
\node[lab] at ( 1.74,4.86) {$P_3$};
\node[lab] at (-3.12,1.90) {$Q_1$};
\node[lab] at (-2.70,3.97) {$Q_j$};

\node[starvertex] at (2.88,6.11) {};
\node[diamondvertex] at (2.88,5.59) {};
\node[lab,anchor=west] at (3.15,6.11) {$V_1$};
\node[lab,anchor=west] at (3.15,5.59) {$V_2$};
\end{tikzpicture}%
}
\caption{}
\label{fig3b:lemma}
\end{subfigure}
\caption{Example with \(i'' = 3\): (a) \(v' \to s_1^3\); (b) \(s_1^3 \to v'\).}
\label{fig3:lemma}
\end{figure}

\section{Unilateral 2-linkages in split digraphs}\label{sec:splitunilateralsec}

Now we are ready to prove Theorem \ref{thm:mainsplit} which we recall below.

\ThmUnilateralsplit*
\begin{proof}

Let \(D=(V_1,V_2;A)\) be a \(4\)-strong split digraph, and let \(s_1,s_2,t_1,t_2\) be four distinct vertices of \(D\). Suppose that \(D\) is not unilaterally \(2\)-linked; that is,

\begin{equation}
(D,\{s_1,t_1\},\{s_2,t_2\}) \text{ is not good}.
\tag{$\ast$}
\label{eq:contradiction}
\end{equation}

By Menger's theorem, there exist four internally disjoint \((s_1,t_1)\)-paths \(P_1,P_2,P_3,P_4\) in \(D\), which can be chosen to be minimal. Let \(S = V(P_1) \cup V(P_2) \cup V(P_3) \cup V(P_4)\). Similarly, let \(Q_1,Q_2,Q_3,Q_4\) be internally disjoint minimal \((s_2,t_2)\)-paths in \(D\). We may assume that each of the path \(P_i\) for \(i \in \intfour\) intersects all the paths \(Q_i\), \( i \in \intfour\). Furthermore, let \(s_1^i(s_2^i)\) be the successor of \(s_1(s_2)\) on \(P_i(Q_i)\) for \(i \in \intfour\), and let \(t_1^i(t_2^i)\) be the predecessor of \(t_1(t_2)\) on \(P_i(Q_i)\) for \(i \in \intfour\).

Suppose that there exists some \(i \in \{1,2\}\) such that \(|\{s_i,t_i\} \cap V_1| = 0\), i.e., \(\{s_i,t_i\} \subseteq V_2\); then there is an arc between \(s_i\) and \(t_i\). Since \(D - \{s_i,t_i\}\) is 2-strong, there exists an \((s_{3-i},t_{3-i})\)-path in \(D - \{s_i,t_i\}\), which contradicts \eqref{eq:contradiction}.
Therefore, we may assume that \(|\{s_i,t_i\} \cap V_1| \geq 1\) for each \(i \in \{1,2\}\).
Next, we divide the proof into three cases based on the distribution of \(\{s_1,s_2,t_1,t_2\}\) in \(V_1\) and \(V_2\).

\setcounter{case}{0}
\vspace{2mm}
\mycase \label{case1} \(|\{s_i,t_i\} \cap V_1| = 1\) for each \(i \in \inttwo\).
\vspace{2mm}

In this case, the terminal pairs \(\{s_1,t_1\}\) and \(\{s_2,t_2\}\) have four distinct distributions with respect to the partition \((V_1,V_2)\). Since we do not care whether \(P_i\) is an \((s_i,t_i)\)-path or a \((t_i,s_i)\)-path, we may always relabel two of the terminals to achieve \(\{s_1,s_2\} \subseteq V_1\) and \(\{t_1,t_2\} \subseteq V_2\). Hence, w.l.o.g.,  we assume that \(\{s_1,s_2\} \subseteq V_1\) and \(\{t_1,t_2\} \subseteq V_2\).

\begin{claim}\label{claim:s_1 not in v(Q_1) V(Q_2) V(Q_3) V(Q_4)}
\(s_1 \notin V(Q_1) \cup V(Q_2) \cup V(Q_3) \cup V(Q_4)\)
\end{claim}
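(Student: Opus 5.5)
The plan is to argue by contradiction with \eqref{eq:contradiction}. Suppose $s_1$ lies on some $Q_j$, say $Q_1$. Since $s_1\neq s_2,t_2$, it is an internal vertex of $Q_1$, and it lies on no other $Q_k$ because the $Q$'s are internally disjoint. Let $a$ be its predecessor and $b$ its successor on $Q_1$. As $s_1\in V_1$, both $a,b\in V_2$, and $t_1\in V_2$ as well, so $t_1$ is adjacent to $a$, to $b$, and to every neighbour of $s_1$.

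The key counting fact is that a vertex other than $s_2,t_2$ lies on at most one $Q_k$. Hence any $(s_1,t_1)$- or $(t_1,s_1)$-path on at most three vertices, one of which is $s_1$, misses at least one of $Q_1,\dots,Q_4$: $s_1$ meets only $Q_1$, and each of the other two vertices meets at most one further $Q_k$. Such a path together with a missed $Q_k$ gives a good pair, contradicting \eqref{eq:contradiction}.

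First I would exploit the short paths. Let $c$ be any in-neighbour and $d$ any out-neighbour of $s_1$; all of these lie in $V_2$. If $d\to t_1$, then $s_1dt_1$ contradicts \eqref{eq:contradiction}. If $t_1\to c$, then the $(t_1,s_1)$-path $t_1cs_1$ contradicts it; this is where unilaterality is used. So we may assume $t_1\to d$ for every $d\in N^+(s_1)$ and $c\to t_1$ for every $c\in N^-(s_1)$. In particular $a\to t_1\to b$.

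Next I would use four-vertex paths, which miss one $Q_k$ as long as only a controlled number of $Q$'s is hit. By $4$-strong connectivity, $|N^+(s_1)|,|N^-(s_1)|\ge 4$, and at most two vertices of $Q_1$ ($a$ and $b$) are neighbours of $s_1$ there. Choose $d\in N^+(s_1)$ and $c\in N^-(s_1)$ off $Q_1$ with $c\neq d$, and consider the arc between $c$ and $d$, both in $V_2$.
\begin{itemize}
\item If $d\to c$, then $s_1dct_1$ is an $(s_1,t_1)$-path.
\item If $c\to d$ for all such choices, I would instead look at the arcs between $b$ (or $a$) and vertices on the other $Q$'s, aiming for an $(s_1,t_1)$-path $s_1 d c t_1$ or $s_1 b c t_1$ that meets at most three $Q$'s.
\end{itemize}
Choosing $c,d$ on the same $Q_k$, or on a $Q_k$ that also contains $t_1$, is what makes the count work. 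Alternatively, I would reroute $Q_1$ as $Q_1[s_2,a]\,a t_1 b\,Q_1[b,t_2]$, which avoids $s_1$, and then use a path $s_1 d \cdots$ into the remaining structure.

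The main obstacle will be this last sub-case, $a\to t_1\to b$ together with $c\to d$ between in- and out-neighbours of $s_1$. It needs a careful pigeonhole argument over the four $Q$'s, handling separately whether $t_1$ lies on $Q_1$, on another $Q_k$, or on none. I would first try the case $t_1\notin V(Q_1)$, using that $t_1$ then occupies at most one of $Q_2,Q_3,Q_4$, so two of them are free to absorb $c$ and $d$.
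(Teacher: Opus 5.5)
Your proposal does not reach a contradiction. After the largely correct observation that three-vertex paths through $s_1$ force $a\to t_1$, the decisive sub-case is left to an unspecified pigeonhole argument, and the tools you set up cannot close it. A four-vertex path $s_1dct_1$ may meet all four $Q_k$: $s_1\in Q_1$ and $d,c,t_1$ can lie on the other three. Nothing lets you choose $c,d$ on a common $Q_k$. And when $c\to d$ you get no new path at all, because $c$ and $d$ are attached only to $s_1$.

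Two preliminary steps are also wrong. First, minimality of $Q_1$ forbids only forward shortcuts. So $s_1$ may dominate vertices before $a$, or be dominated by vertices after $b$, on $Q_1$. You therefore cannot assume its neighbours on $Q_1$ are just $a,b$, nor that suitable $c,d$ exist off $Q_1$. Second, your counting fact needs the short path to avoid $t_2$, since $t_2$ lies on every $Q_k$. This fails, for instance, when $b=t_2$ or $t_2\in N^{\pm}(s_1)$, so $t_1\to b$ is not forced.

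The missing idea is to work on $Q_1$ with the vertex $s_2^1$ right after $s_2$, rather than with neighbours of $s_1$. It lies in $V_2$ and precedes $s_1$ on $Q_1$. Hence $t_1s_2^1\circ Q_1[s_2^1,s_1]$ is a $(t_1,s_1)$-path inside $V(Q_1)\cup\{t_1\}$, which misses the $Q$'s avoiding $t_1$; this forces $s_2^1\to t_1$. Now take a successor $s_1^j$ of $s_1$ on some $P_j$. An arc $s_1^js_2^1$ would give the path $s_1s_1^js_2^1t_1$, which meets at most three $Q$'s, so $s_2^1\to s_1^j$.

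\emph{This} orientation is usable because $s_2^1$ is dominated by $s_2$: the path $s_2s_2^1s_1^j\cdots$ can be extended to $t_2$ so as to miss some $P_i$. That last step needs the $P$-family and the position of $s_2$:
\begin{itemize}
\item If $s_2\notin S$, take three $P$'s avoiding $s_2,t_2$. Let $w\in V(P_{i_0})$ be the last vertex of $Q_2$ on them, and continue along $P_{i_0}[s_1^{i_0},w]\circ Q_2[w,t_2]$.
\item If $s_2\in V(P_1)$, the $(t_2,s_2)$-path $t_2s_1^1\circ P_1[s_1^1,s_2]$ forces $s_1^1\to t_2$. Then $s_2s_2^1s_1^1t_2$ misses some $P_i$.
\end{itemize}
Your plan counts only against the $Q$-family and never brings in the $P_i$'s or the location of $s_2$. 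So it has no mechanism for the orientation that your dichotomy leaves unresolved.
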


\begin{proof}
    Suppose \(s_1 \in V(Q_1) \cup V(Q_2) \cup V(Q_3) \cup V(Q_4)\). W.l.o.g., assume \(s_1 \in V(Q_1)\).
    Suppose that \(s_2 \notin S\). Then there exist at least three internally disjoint \((s_1,t_1)\)-paths in \(D-\{s_2,t_2\}\). W.l.o.g., let these three \((s_1,t_1)\)-paths be \(P_1, P_2, P_3\).
   Since \(D\) is 4-strong, there are at least two internally disjoint \((s_2,t_2)\)-paths in \(D-\{s_1,t_1\}\). W.l.o.g., let these two paths be \(Q_2\) and \(Q_3\). Let \(w\) be the last vertex on \(Q_2\) that lies in \(\bigcup_{i=1}^3 V(P_i)\). Then \(w\in V(P_{i_0})\) for some \(i_0\in\intthree\). Clearly, \(s_2^1\) and \(s_1^{i_0}\) are adjacent since both lie in \(V_2\). If \(s_2^1 \to s_1^{i_0}\), then the \((s_2,t_2)\)-path \(s_2s_2^1 \circ s_2^1s_1^{i_0} \circ P_{i_0}[s_1^{i_0},w] \circ Q_2[w,t_2]\) is disjoint from \(P_i\) for some \(i \in \intthree \setminus \{i_0\}\), which contradicts \eqref{eq:contradiction}. Hence, we may assume \(s_1^{i_0} \to s_2^1\). Similarly, \(t_1\) and \(s_2^1\) are adjacent as they are both in \(V_2\). If \(t_1 \to s_2^1\), then the \((t_1,s_1)\)-path \(t_1s_2^1 \circ Q_1[s_2^1,s_1]\) is disjoint from \(Q_j\) for any \(j\in\{2,3\}\), yielding a contradiction to \eqref{eq:contradiction}. Hence, we have \(s_2^1 \to t_1\). However, {then} the \((s_1,t_1)\)-path \(s_1s_1^{i_0}s_2^1t_1\) is disjoint from \(Q_j\) for some \(j \in \{2,3\}\), which again contradicts \eqref{eq:contradiction} (See Figure \ref{fig4a:thm}). Therefore, we may assume \(s_2 \in S\).

   W.l.o.g., assume \(s_2 \in V(P_1)\), since \(s_2 \in S\). Note that \(s_1^1, t_2 \in V_2\). If \(t_2 \to s_1^1\), then the \((t_2,s_2)\)-path \(t_2s_1^1 \circ P_1[s_1^1,s_2]\) is disjoint from \(P_i\) for some  \(i \in \intfour \setminus \{1\}\), which contradicts \eqref{eq:contradiction}. Thus we may assume \(s_1^1 \to t_2\).
   Recall that \(s_1^1, s_2^1 \in V_2\). If \(s_2^1 \to s_1^1\), then the \((s_2,t_2)\)-path \(s_2 \to s_2^1\to s_1^1 \to t_2\) is disjoint from \(P_i\) for some \(i \in \intfour \setminus \{1\}\), again contradicting \eqref{eq:contradiction}. Hence we have \(s_1^1 \to s_2^1\).
   We know that \(s_2^1, t_1 \in V_2\). If \(t_1 \to s_2^1\), then the \((t_1,s_1)\)-path \(t_1s_2^1 \circ Q_1[s_2^1,s_1]\) is disjoint from each \(Q_j\) with \(j\in\{2,3\}\), yielding a contradiction to \eqref{eq:contradiction}. Therefore, we may assume \(s_2^1 \to t_1\). 
   {Now} the \((s_1,t_1)\)-path \(s_1 \to s_1^1 \to s_2^1 \to t_1\) is disjoint from some \(Q_j\) where \(j\in\{2,3\}\) (Figure \ref{fig4b:thm}) {This final contradiction proves the claim}.
   \end{proof}

\begin{figure}[htbp]
\centering
\tikzset{pics/manuscriptdirection/.style={code={
  \fill[black] (0pt,0pt) -- (-6pt,2pt) -- (-6pt,-2pt) -- cycle;
}}}

\begin{subfigure}{0.4\textwidth}
\centering
\resizebox{\linewidth}{!}{%
\begin{tikzpicture}[
    font=\large,
    line cap=round,
    line join=round,
    pathline/.style={draw=black,line width=0.8pt},
    solidroute/.style={draw=black,line width=1.4pt,solid},
    dashedroute/.style={draw=black,line width=1.2pt,
        dash pattern=on 4pt off 2.5pt,line cap=butt},
    arrow/.style={-{Triangle[length=6pt,width=3pt,line width=0pt]},shorten >=0pt},
    starvertex/.style={shape=star,star points=5,star point ratio=2.35,
        draw=none,fill=black,minimum size=12pt,inner sep=0pt,outer sep=0pt},
    diamondvertex/.style={diamond,draw=none,fill=black,
        minimum size=7.5pt,inner sep=0pt,outer sep=0pt},
    blackvertex/.style={circle,draw=none,fill=black,
        minimum size=5.5pt,inner sep=0pt,outer sep=0pt},
    pics/figfourarrow/.style={code={
        \fill[black] (0pt,0pt) -- (-6pt,1.5pt) -- (-6pt,-1.5pt) -- cycle;
    }},
    lab/.style={inner sep=1pt}
]

\path[use as bounding box] (-4.95,-0.62) rectangle (3.48,7.02);
\node[starvertex]    (s1)  at ( 0.00,6.50) {};
\node[diamondvertex] (t1)  at ( 0.00,0.00) {};
\node[starvertex]    (s2)  at (-4.20,3.30) {};
\node[diamondvertex] (t2)  at ( 2.10,3.26) {};
\node[diamondvertex] (s11) at (-1.25,5.68) {};
\node[diamondvertex] (s12) at (-0.52,5.60) {};
\node[diamondvertex] (s13) at ( 0.47,5.58) {};
\node[diamondvertex] (s14) at ( 1.21,5.60) {};
\node[diamondvertex] (s21) at (-3.86,4.45) {};
\node[diamondvertex] (q2d) at (-3.53,3.38) {};
\node[diamondvertex] (q3d) at (-3.25,2.22) {};
\node[diamondvertex] (q4d) at (-3.82,1.87) {};
\node[blackvertex]   (w)   at ( 0.86,4.16) {};

\draw[pathline]
    (s1) .. controls (-0.46,6.35) and (-0.96,5.99) .. (s11)
    pic[pos=0.70,sloped,allow upside down] {manuscriptdirection};
\draw[pathline]
    (s11) .. controls (-2.90,3.65) and (-2.66,1.39) .. (t1)
    pic[pos=0.82,sloped,allow upside down] {manuscriptdirection};
\draw[pathline]
    (s1) .. controls (-0.18,6.18) and (-0.38,5.86) .. (s12)
    pic[pos=0.70,sloped,allow upside down] {manuscriptdirection};
\draw[pathline]
    (s12) .. controls (-1.30,3.49) and (-1.19,1.88) .. (t1)
    pic[pos=0.80,sloped,allow upside down] {manuscriptdirection};

\draw[pathline]
    (s13) .. controls (0.66,5.09) and (0.78,4.56) .. (w)
    pic[pos=0.60,sloped,allow upside down] {manuscriptdirection};
\draw[pathline]
    (w) .. controls (1.16,2.62) and (0.52,0.87) .. (t1)
    pic[pos=0.80,sloped,allow upside down] {manuscriptdirection};
\draw[pathline]
    (s1) .. controls (0.45,6.33) and (0.93,5.96) .. (s14)
    pic[pos=0.70,sloped,allow upside down] {manuscriptdirection};
\draw[pathline]
    (s14) .. controls (1.83,4.88) and (2.08,4.12) .. (t2)
    pic[pos=0.65,sloped,allow upside down] {manuscriptdirection};
\draw[pathline]
    (t2) .. controls (2.09,1.68) and (1.31,0.56) .. (t1)
    pic[pos=0.80,sloped,allow upside down] {manuscriptdirection};

\draw[pathline,arrow]
    (s2) .. controls (-4.12,3.70) and (-4.00,4.15) .. (s21);
\draw[pathline]
    (s21) .. controls (-3.11,5.96) and (-1.53,6.62) .. (s1)
    pic[pos=0.70,sloped,allow upside down] {manuscriptdirection};
\draw[pathline]
    (s1) .. controls (2.22,6.42) and (3.52,5.21) .. (t2)
    pic[pos=0.75,sloped,allow upside down] {manuscriptdirection};

\draw[pathline]
    (s2) .. controls (-3.98,3.32) and (-3.77,3.34) .. (q2d)
    pic[pos=0.88,sloped,allow upside down] {manuscriptdirection};
\draw[pathline]
    (q2d) .. controls (-2.65,3.61) and (-2.22,4.51) .. (-1.50,4.18)
    pic[pos=0.45,sloped,allow upside down] {manuscriptdirection}
    .. controls (-0.51,3.78) and (-0.32,3.86) .. (w);
\draw[pathline]
    (w) .. controls (1.24,4.16) and (1.68,3.80) .. (t2)
    pic[pos=0.55,sloped] {figfourarrow};

\draw[pathline]
    (s2) .. controls (-4.19,2.73) and (-4.03,2.22) .. (q4d)
    pic[pos=0.70,sloped,allow upside down] {manuscriptdirection};
\draw[pathline]
    (q4d) .. controls (-3.11,0.52) and (-1.36,-0.06) .. (t1)
    pic[pos=0.60,sloped,allow upside down] {manuscriptdirection};
\draw[pathline]
    (t1) .. controls (2.36,0.13) and (3.55,1.33) .. (t2)
    pic[pos=0.65,sloped,allow upside down] {manuscriptdirection};

\draw[dashedroute]
    (s2) .. controls (-3.94,2.94) and (-3.62,2.54) .. (q3d)
    pic[pos=0.65,sloped,allow upside down] {manuscriptdirection};
\draw[dashedroute]
    (q3d) .. controls (-2.77,1.77) and (-2.25,1.54) .. (-1.65,1.82)
    .. controls (-1.05,2.13) and (-0.82,2.96) .. (-0.29,2.86)
    .. controls (0.53,2.69) and (1.09,1.39) .. (t2)
    pic[pos=0.9,sloped,allow upside down] {manuscriptdirection};

\draw[solidroute,arrow]
    (s1) .. controls (0.20,6.22) and (0.38,5.90) .. (s13);
\draw[solidroute,arrow] (s13) -- (s21);
\draw[solidroute]
    (s21) .. controls (-2.67,2.81) and (-2.96,1.72) .. (-2.22,1.02)
    pic[pos=0.47,sloped] {figfourarrow}
    .. controls (-1.59,0.45) and (-0.64,0.17) .. (t1)
    pic[pos=0.70,sloped,allow upside down] {manuscriptdirection};

\node[lab,above=5pt] at (s1.north) {$s_1$};
\node[lab,below=5pt] at (t1.south) {$t_1$};
\node[lab,left=7pt] at (s2.west) {$s_2$};
\node[lab,right=5pt] at (t2.east) {$t_2$};
\node[lab,left=5pt,yshift=4pt] at (s11.center) {$s_1^1$};
\node[lab,left=5pt] at (s12.center) {$s_1^2$};
\node[lab,right=5pt,yshift=-1pt] at (s13.center) {$s_1^3$};
\node[lab,right=5pt,yshift=1pt] at (s14.center) {$s_1^4$};
\node[lab,left=5pt,yshift=5pt] at (s21.center) {$s_2^1$};
\node[lab] at (-1.85,1.06) {$P_1$};
\node[lab] at (-0.95,1.06) {$P_2$};
\node[lab] at ( 0.93,1.06) {$P_3$};
\node[lab] at ( 1.85,1.06) {$P_4$};

\node[lab,above right=1pt] at (w.north east) {$w$};
\node[lab] at (-3.42,5.61) {$Q_1$};
\node[lab] at (-2.70,4.22) {$Q_2$};
\node[lab] at (-3.12,1.64) {$Q_3$};
\node[lab] at (-3.03,0.53) {$Q_4$};

\node[starvertex] at (2.64,6.20) {};
\node[diamondvertex] at (2.64,5.70) {};
\node[lab,anchor=west] at (2.90,6.20) {$V_1$};
\node[lab,anchor=west] at (2.90,5.70) {$V_2$};
\end{tikzpicture}%
}
\caption{}
\label{fig4a:thm}
\end{subfigure}
\hspace{4mm}
\begin{subfigure}{0.4\textwidth}
\centering
\resizebox{\linewidth}{!}{%
\begin{tikzpicture}[
    font=\large,
    line cap=round,
    line join=round,
    pathline/.style={draw=black,line width=0.8pt},
    solidroute/.style={draw=black,line width=1.4pt,solid},
    dashedroute/.style={draw=black,line width=1.2pt,
        dash pattern=on 4pt off 2.5pt,line cap=butt},
    arrow/.style={-{Triangle[length=6pt,width=3pt,line width=0pt]},shorten >=0pt},
    starvertex/.style={shape=star,star points=5,star point ratio=2.35,
        draw=none,fill=black,minimum size=12pt,inner sep=0pt,outer sep=0pt},
    diamondvertex/.style={diamond,draw=none,fill=black,
        minimum size=7.5pt,inner sep=0pt,outer sep=0pt},
    blackvertex/.style={circle,draw=none,fill=black,
        minimum size=5.5pt,inner sep=0pt,outer sep=0pt},
    pics/figfourarrow/.style={code={
        \fill[black] (0pt,0pt) -- (-6pt,1.5pt) -- (-6pt,-1.5pt) -- cycle;
    }},
    lab/.style={inner sep=1pt}
]

\path[use as bounding box] (-4.95,-0.62) rectangle (3.48,7.02);
\node[starvertex]    (s1)  at ( 0.00,6.50) {};
\node[diamondvertex] (t1)  at ( 0.00,0.00) {};
\node[starvertex]    (s2)  at (-2.20,3.25) {};
\node[diamondvertex] (t2)  at ( 2.10,3.26) {};
\node[diamondvertex] (s11) at (-1.25,5.68) {};
\node[diamondvertex] (s12) at (-0.52,5.60) {};
\node[diamondvertex] (s13) at ( 0.47,5.58) {};
\node[diamondvertex] (s14) at ( 1.21,5.60) {};
\node[diamondvertex] (s21) at (-2.88,4.33) {};
\node[diamondvertex] (q2d) at (-1.42,3.90) {};
\node[diamondvertex] (q3d) at (-1.52,2.33) {};
\node[diamondvertex] (q4d) at (-2.77,2.16) {};

\draw[pathline]
    (s11) .. controls (-1.83,4.83) and (-2.18,4.08) .. (s2)
    pic[pos=0.55,sloped,allow upside down] {manuscriptdirection};
\draw[pathline]
    (s2) .. controls (-2.24,1.78) and (-1.40,0.56) .. (t1)
    pic[pos=0.80,sloped,allow upside down] {manuscriptdirection};
\draw[pathline]
    (s1) .. controls (-0.18,6.18) and (-0.38,5.86) .. (s12)
    pic[pos=0.70,sloped,allow upside down] {manuscriptdirection};
\draw[pathline]
    (s12) .. controls (-1.30,3.49) and (-1.19,1.88) .. (t1)
    pic[pos=0.80,sloped,allow upside down] {manuscriptdirection};

\draw[pathline,arrow]
    (s1) .. controls (0.20,6.22) and (0.38,5.90) .. (s13);
\draw[pathline]
    (s13) .. controls (1.16,3.70) and (1.16,1.69) .. (t1)
    pic[pos=0.80,sloped,allow upside down] {manuscriptdirection};
\draw[pathline]
    (s1) .. controls (0.45,6.33) and (0.93,5.96) .. (s14)
    pic[pos=0.70,sloped,allow upside down] {manuscriptdirection};
\draw[pathline]
    (s14) .. controls (1.83,4.88) and (2.08,4.12) .. (t2)
    pic[pos=0.65,sloped,allow upside down] {manuscriptdirection};
\draw[pathline]
    (t2) .. controls (2.09,1.68) and (1.31,0.56) .. (t1)
    pic[pos=0.80,sloped,allow upside down] {manuscriptdirection};

\draw[pathline,arrow]
    (s2) .. controls (-2.58,3.70) and (-2.80,3.96) .. (s21);
\draw[pathline]
    (s21) .. controls (-3.15,5.39) and (-2.10,6.77) .. (s1)
    pic[pos=0.70,sloped,allow upside down] {manuscriptdirection};
\draw[pathline]
    (s1) .. controls (2.22,6.42) and (3.52,5.21) .. (t2)
    pic[pos=0.75,sloped,allow upside down] {manuscriptdirection};

\draw[pathline]
    (s2) .. controls (-1.97,3.60) and (-1.71,3.91) .. (q2d)
    pic[pos=0.70,sloped,allow upside down] {manuscriptdirection};
\draw[pathline]
    (q2d) .. controls (-0.91,3.98) and (-0.23,3.43) .. (0.28,3.60)
    .. controls (1.15,3.85) and (1.42,4.06) .. (t2)
    pic[pos=0.75,sloped,allow upside down] {manuscriptdirection};

\draw[pathline]
    (s2) .. controls (-2.55,2.76) and (-2.68,2.48) .. (q4d)
    pic[pos=0.70,sloped,allow upside down] {manuscriptdirection};
\draw[pathline]
    (q4d) .. controls (-2.98,0.73) and (-1.66,-0.22) .. (t1)
    pic[pos=0.60,sloped,allow upside down] {manuscriptdirection};
\draw[pathline]
    (t1) .. controls (2.36,0.13) and (3.55,1.33) .. (t2)
    pic[pos=0.65,sloped,allow upside down] {manuscriptdirection};

\draw[pathline]
    (s11) .. controls (-0.89,4.88) and (-0.63,4.54) .. (0.10,4.45)
    pic[pos=0.94,sloped] {figfourarrow}
    .. controls (1.01,4.41) and (1.41,4.47) .. (t2);

\draw[dashedroute]
    (s2) .. controls (-2.02,2.87) and (-1.84,2.54) .. (q3d)
    pic[pos=0.65,sloped,allow upside down] {manuscriptdirection};
\draw[dashedroute]
    (q3d) .. controls (-1.03,1.97) and (-0.29,2.22) .. (0.69,2.25)
    .. controls (1.22,2.28) and (1.52,2.39) .. (t2)
    pic[pos=0.75,sloped,allow upside down] {manuscriptdirection};

\draw[solidroute,arrow]
    (s1) .. controls (-0.46,6.35) and (-0.96,5.99) .. (s11);
\draw[solidroute,arrow] (s11) -- (s21);
\draw[solidroute]
    (s21) .. controls (-4.33,2.66) and (-4.34,0.96) .. (-2.46,0.15)
    pic[pos=0.74,sloped,yscale=1.5] {figfourarrow}
    .. controls (-1.68,-0.20) and (-0.81,-0.15) .. (t1)
    pic[pos=0.70,sloped,allow upside down] {manuscriptdirection};
\node[lab,above=5pt] at (s1.north) {$s_1$};
\node[lab,below=5pt] at (t1.south) {$t_1$};
\node[lab,left=7pt] at (s2.west) {$s_2$};
\node[lab,right=5pt] at (t2.east) {$t_2$};
\node[lab,left=5pt,yshift=4pt] at (s11.center) {$s_1^1$};
\node[lab,left=5pt] at (s12.center) {$s_1^2$};
\node[lab,right=5pt,yshift=-1pt] at (s13.center) {$s_1^3$};
\node[lab,right=5pt,yshift=1pt] at (s14.center) {$s_1^4$};
\node[lab,left=5pt,yshift=5pt] at (s21.center) {$s_2^1$};
\node[lab] at (-1.85,1.06) {$P_1$};
\node[lab] at (-0.95,1.06) {$P_2$};
\node[lab] at ( 0.93,1.06) {$P_3$};
\node[lab] at ( 1.85,1.06) {$P_4$};

\node[lab] at (-2.64,6.02) {$Q_1$};
\node[lab] at ( 0.02,3.89) {$Q_2$};
\node[lab] at (-0.20,2.52) {$Q_3$};
\node[lab] at (-2.86,0.78) {$Q_4$};

\node[starvertex] at (2.64,6.20) {};
\node[diamondvertex] at (2.64,5.70) {};
\node[lab,anchor=west] at (2.90,6.20) {$V_1$};
\node[lab,anchor=west] at (2.90,5.70) {$V_2$};
\end{tikzpicture}%
}
\caption{}
\label{fig4b:thm}
\end{subfigure}
\caption{(a) \(s_2 \notin S\), \(i_0 = 3\); (b) \(s_2 \in V(P_1)\).}
\label{fig4:thm}
\end{figure}

By Claim \ref{claim:s_1 not in v(Q_1) V(Q_2) V(Q_3) V(Q_4)}, we obtain symmetrically that \(s_2 \notin S\). Therefore, 

\begin{itemize}
    \item There exist three internally disjoint \((s_1,t_1)\)-paths in \(D-\{s_2,t_2\}\).
    \item There exist three internally disjoint \((s_2,t_2)\)-paths in \(D-\{s_1,t_1\}\).
\end{itemize}

By Lemma \ref{Lemma:{s_1,s_2} in V_1 : (3,3)}, the triple \((D,\{s_1,t_1\},\{s_2,t_2\})\) is good, which contradicts \eqref{eq:contradiction}.

\vspace{2mm}
\mycase \label{case2} \(|\{s_i,t_i\} \cap V_1| =2\) for each \(i \in \inttwo\).
\vspace{2mm}

That is, \(\{s_1,t_1,s_2,t_2\} \subseteq V_1\). We now verify that for each \(i \in \{1,2\}\), there exist three internally disjoint \((s_i,t_i)\)-paths in \(D-\{s_{3-i},t_{3-i}\}\).

\begin{claim}\label{claim:case2--there is 3 (s_i,t_i)-paths in D-{s_{3-i},t_{3-i}}}
    There exist three internally disjoint \((s_i,t_i)\)-paths in \(D-\{s_{3-i},t_{3-i}\}\) for each \(i \in \inttwo\).
\end{claim}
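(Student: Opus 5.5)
By symmetry it suffices to treat $i=1$: we must find three internally disjoint $(s_1,t_1)$-paths in $D-\{s_2,t_2\}$. The plan is a Menger argument. Suppose the contrary. Then there is a set $X$ with $|X|\le 2$ such that $D-\{s_2,t_2\}-X$ has no $(s_1,t_1)$-path. Since $D$ is $4$-strong, $D-X$ is $2$-strong, so $s_2$ or $t_2$ is needed to reach $t_1$ from $s_1$ in $D-X$. Note also that $s_1t_1$ is not an arc, because $s_1,t_1\in V_1$.

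I would first dispose of an easy situation. Suppose some $(s_1,t_1)$-path in $D-X$ avoids $s_2$ but uses $t_2$, or symmetrically avoids $t_2$ but uses $s_2$. This alone does not suffice. Instead, reduce directly to the fact that $(\ast)$ fails. Whenever $D-\{s_2,t_2\}$ contains an $(s_1,t_1)$-path $P$ or a $(t_1,s_1)$-path $P$ with $|V(P)|\le 4$, the digraph $D-V(P)$ is still strong as long as $|V(P)\setminus\{s_1,t_1\}|\le 2$, because $D$ is $4$-strong. Hence $s_2$ reaches $t_2$ in $D-V(P)$, contradicting $(\ast)$. So the key observation is that every $(s_1,t_1)$-path and every $(t_1,s_1)$-path in $D-\{s_2,t_2\}$ has at least three internal vertices. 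In particular, for all $u\in N^+(s_1)\cap V_2$ and $w\in N^-(t_1)\cap V_2$ with $u\ne w$, the arc between $u$ and $w$ must go $w\to u$. The same holds with the roles of $s_1,t_1$ exchanged.

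Now take the two in/out-neighbourhoods. Let $A=N^+(s_1)\setminus\{s_2,t_2\}$ and $B=N^-(t_1)\setminus\{s_2,t_2\}$. Both lie in $V_2$ and have size at least $2$, because of the degree bound of $4$ (the neighbourhoods avoid $V_1$). If $A\cap B\ne\emptyset$, we get a path with one internal vertex, which is excluded. So $A$ and $B$ are disjoint, and $B\Rightarrow A$ holds completely. Next consider the separator $X$. In $D-\{s_2,t_2\}-X$, some $a\in A\setminus X$ and some $b\in B\setminus X$ survive, provided $|A\setminus X|,|B\setminus X|\ge1$. The set of vertices reachable from $s_1$ contains $a$ but not $t_1$. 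I would then use that $D-\{s_2,t_2\}-X$ has a vertex cut of size at most $0$ between $a$ and $b$, while $D$ is $4$-strong. So the only vertices missing are the four in $\{s_2,t_2\}\cup X$, and this forces every $(a,b)$-path in $D$ to hit $\{s_2,t_2\}\cup X$. Combining this with $b\to a$ and the split structure, any out-neighbour of the reachable set inside $V_2$ is adjacent to $b$. I would argue it must be dominated by $b$, else it gives a short path. This yields a contradiction with $4$-strong connectivity of the reachable side: the reachable side has out-neighbourhood contained in $\{s_2,t_2\}\cup X$, and $X$ can be assumed to contain the needed vertices of $A$ or $B$.

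The main obstacle is handling the case where $X$ swallows all of $A$ or all of $B$, since then $|A|=2$ or $|B|=2$ and $s_1$ has out-degree exactly $4$ into $X\cup\{s_2,t_2\}$. There I would use $(\ast)$ in the reversed orientation. I would look for a short $(t_1,s_1)$-path through $N^+(t_1)$ and $N^-(s_1)$, or I would route the $(s_2,t_2)$-path through $s_1$'s side, using that $s_2$ and $t_2$ are out- and in-neighbours of $s_1$ together with tournament arcs inside $V_2$. If necessary, I would then split into subcases according to which of $s_2,t_2$ lie in $N^+(s_1)$ and use the arc orientation between $s_1$'s neighbours and $t_2$'s neighbours in $V_2$.
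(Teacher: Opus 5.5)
There is a genuine gap: the proposal never reaches a contradiction, and three of its steps fail.

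\textbf{The ``key observation'' is unjustified.} If $P$ is an $(s_1,t_1)$- or $(t_1,s_1)$-path with two internal vertices, then $D-V(P)$ is $D$ minus \emph{four} vertices. Being $4$-strong says nothing about that. You only get that paths with at most one internal vertex are excluded, since $D-\{s_1,x,t_1\}$ is strong. So $A\cap B=\emptyset$ is fine, but the orientation $w\to u$ (your $B\Rightarrow A$) is not available, and everything built on it falls.

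\textbf{You misread Case~2.} Here all four terminals lie in the independent set $V_1$, so $s_2$ and $t_2$ are not adjacent to $s_1$ at all. Your remarks that $s_2,t_2$ are out- and in-neighbours of $s_1$, and that $s_1$ has out-degree exactly $4$ into $X\cup\{s_2,t_2\}$, are false. In fact $A=N^+(s_1)\subseteq V_2$ automatically has at least four vertices.

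\textbf{The separator step cannot contradict connectivity.} A set whose out-neighbourhood lies in the $4$-set $\{s_2,t_2\}\cup X$ is perfectly consistent with $D$ being $4$-strong. The contradiction must come from $(\ast)$: you need to exhibit an $(s_2,t_2)$- or $(t_2,s_2)$-path missing two internally disjoint $(s_1,t_1)$-paths. The proposal never constructs one, and your final paragraph is a list of intentions rather than an argument.

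\textbf{The missing idea} is to exploit the $4$-separator you found, which is how the paper argues.
\begin{itemize}
\item Take four internally disjoint minimal $(s_1,t_1)$-paths $P_1,\dots,P_4$ in $D$. If fewer than three of them avoid $\{s_2,t_2\}$, then $s_2$ and $t_2$ are internal vertices of two different paths, say $s_2\in V(P_1)$ and $t_2\in V(P_4)$, while $P_2,P_3$ avoid both.
\item Since $V_1$ is independent, the neighbours $s_2^-,s_2^+$ of $s_2$ on $P_1$ and $t_2^-,t_2^+$ of $t_2$ on $P_4$ all lie in $V_2$. In particular they are distinct from $s_1,t_1$ and pairwise adjacent.
\item Minimality of $P_1$ and $P_4$ forces $s_2^+\to s_2^-$ and $t_2^+\to t_2^-$.
\item If $s_2^+\to t_2^-$, then $s_2s_2^+t_2^-t_2$ avoids $P_2\cup P_3$. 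Otherwise $t_2t_2^+t_2^-s_2^+s_2^-s_2$ is a $(t_2,s_2)$-path avoiding $P_2\cup P_3$. Either way $(\ast)$ is violated.
\end{itemize}
This uses the freedom to route the second pair backwards, which your approach never exploits. The case $i=2$ is symmetric.
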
 

\begin{proof}
Since \(D\) is 4-strong, \(D-\{s_{3-i},t_{3-i}\}\) contains at least two internally disjoint \((s_i,t_i)\)-paths for each \(i \in \inttwo\).
Next, we verify that there exist three internally disjoint$(s_1,t_1)$-paths in $D-\{s_2,t_2\}$.

    If \(D-\{s_2,t_2\}\) has exactly two internally disjoint \((s_1,t_1)\)-paths, say \(P_2\) and \(P_3\). This implies that \(s_2\) and \(t_2\) lie on \(P_1\) and \(P_4\), respectively. W.l.o.g., assume \(s_2 \in V(P_1)\) and \(t_2 \in V(P_4)\). From the definition of split digraphs and \(\{s_1,t_1,s_2,t_2\} \subseteq V_1\), we obtain
\[
|P_1[s_1,s_2]|,\ |P_1[s_2,t_1]|,\ |P_4[s_1,t_2]|,\ |P_4[t_2,t_1]| \ge 3.
\]
Accordingly, we define four vertices as follows:
\begin{itemize}
    \item \(s_2^-\): the predecessor of \(s_2\) along \(P_1[s_1,s_2]\);
    \item \(s_2^+\): the successor of \(s_2\) along \(P_1[s_2,t_1]\);
    \item \(t_2^-\): the predecessor of \(t_2\) along \(P_4[s_1,t_2]\);
    \item \(t_2^+\): the successor of \(t_2\) along \(P_4[t_2,t_1]\).
\end{itemize}

\begin{figure}[H]
\centering
\tikzset{pics/manuscriptdirection/.style={code={
  \fill[black] (0pt,0pt) -- (-6pt,2pt) -- (-6pt,-2pt) -- cycle;
}}}

\begin{subfigure}{0.33\textwidth}
\centering
\resizebox{\linewidth}{!}{%
\begin{tikzpicture}[
    font=\large,line cap=round,line join=round,
    pathline/.style={draw=black,line width=0.8pt},
    solidroute/.style={draw=black,line width=1.4pt,solid},
    dashedroute/.style={draw=black,line width=1.2pt,
        dash pattern=on 4pt off 2.5pt,line cap=butt},
    arrow/.style={-{Triangle[length=6pt,width=4pt,line width=0pt]},
        shorten >=0pt},
    starvertex/.style={shape=star,star points=5,star point ratio=2.35,
        draw=none,fill=black,minimum size=12pt,inner sep=0pt,outer sep=0pt},
    diamondvertex/.style={diamond,draw=none,fill=black,
        minimum size=7.5pt,inner sep=0pt,outer sep=0pt},
    pics/figfivearrow/.style={code={
        \fill[black] (0pt,0pt) -- (-6pt,2pt) -- (-6pt,-2pt) -- cycle;
    }},
    lab/.style={inner sep=1pt}
]
\path[use as bounding box] (-3.06,-0.64) rectangle (3.25,7.04);

\node[starvertex] (s1) at (0,6.50) {};
\node[starvertex] (t1) at (0,0) {};
\node[starvertex] (s2) at (-2.24,3.23) {};
\node[starvertex] (t2) at (2.10,3.23) {};
\node[diamondvertex] (sm) at (-1.94,4.63) {};
\node[diamondvertex] (sp) at (-1.91,1.80) {};
\node[diamondvertex] (tm) at (1.76,4.80) {};
\node[diamondvertex] (tp) at (1.73,1.58) {};

\draw[pathline]
    (s1) .. controls (-0.75,6.15) and (-1.56,5.43) .. (sm)
    pic[pos=0.65,sloped,allow upside down] {manuscriptdirection};
\draw[pathline]
    (sm) .. controls (-2.12,4.20) and (-2.22,3.75) .. (s2)
    pic[pos=0.86,sloped,allow upside down] {figfivearrow};
\draw[pathline]
    (sp) .. controls (-1.53,1.08) and (-0.77,0.34) .. (t1)
    pic[pos=0.75,sloped,allow upside down] {manuscriptdirection};

\draw[pathline]
    (s1) .. controls (-1.37,4.55) and (-1.37,1.86) .. (t1)
    pic[pos=0.25,sloped,allow upside down] {manuscriptdirection}
    pic[pos=0.84,sloped,allow upside down] {manuscriptdirection};

\draw[pathline]
    (s1) .. controls (0.73,6.23) and (1.44,5.53) .. (tm)
    pic[pos=0.60,sloped,allow upside down] {manuscriptdirection};
\draw[pathline,arrow]
    (t2) .. controls (2.10,2.61) and (1.91,1.91) .. (tp);
\draw[pathline]
    (tp) .. controls (1.34,0.91) and (0.69,0.31) .. (t1)
    pic[pos=0.75,sloped,allow upside down] {manuscriptdirection};

\draw[dashedroute,arrow]
    (s2) .. controls (-2.25,2.65) and (-2.14,2.19) .. (sp);
\draw[dashedroute,arrow] (sp) -- (tm);
\draw[dashedroute]
    (tm) .. controls (1.95,4.34) and (2.09,3.70) .. (t2)
    pic[pos=0.86,sloped,allow upside down] {figfivearrow};

\draw[solidroute]
    (s1) .. controls (1.25,4.52) and (1.25,1.87) .. (t1)
    pic[pos=0.25,sloped,allow upside down] {manuscriptdirection}
    pic[pos=0.84,sloped,allow upside down] {manuscriptdirection};

\node[lab,above=5pt] at (s1.north) {$s_1$};
\node[lab,below=5pt] at (t1.south) {$t_1$};
\node[lab,left=5pt] at (s2.west) {$s_2$};
\node[lab,right=5pt] at (t2.east) {$t_2$};
\node[lab,left=5pt] at (sm.west) {$s_2^-$};
\node[lab,left=5pt] at (sp.west) {$s_2^+$};
\node[lab,right=5pt] at (tm.east) {$t_2^-$};
\node[lab,right=5pt] at (tp.east) {$t_2^+$};
\node[lab] at (-1.42,0.67) {$P_1$};
\node[lab] at (-0.84,0.85) {$P_2$};
\node[lab] at (0.84,0.85) {$P_3$};
\node[lab] at (1.35,0.67) {$P_4$};

\node[starvertex] at (2.25,6.46) {};
\node[diamondvertex] at (2.25,6.00) {};
\node[lab,anchor=west] at (2.52,6.46) {$V_1$};
\node[lab,anchor=west] at (2.52,6.00) {$V_2$};
\end{tikzpicture}%
}
\caption{}
\label{fig5a:thm}
\end{subfigure}
\hspace{4mm}
\begin{subfigure}{0.33\textwidth}
\centering
\resizebox{\linewidth}{!}{%
\begin{tikzpicture}[
    font=\large,line cap=round,line join=round,
    pathline/.style={draw=black,line width=0.8pt},
    solidroute/.style={draw=black,line width=1.4pt,solid},
    dashedroute/.style={draw=black,line width=1.2pt,
        dash pattern=on 4pt off 2.5pt,line cap=butt},
    arrow/.style={-{Triangle[length=6pt,width=4pt,line width=0pt]},
        shorten >=0pt},
    starvertex/.style={shape=star,star points=5,star point ratio=2.35,
        draw=none,fill=black,minimum size=12pt,inner sep=0pt,outer sep=0pt},
    diamondvertex/.style={diamond,draw=none,fill=black,
        minimum size=7.5pt,inner sep=0pt,outer sep=0pt},
    pics/figfivearrow/.style={code={
        \fill[black] (0pt,0pt) -- (-6pt,2pt) -- (-6pt,-2pt) -- cycle;
    }},
    lab/.style={inner sep=1pt}
]
\path[use as bounding box] (-3.06,-0.64) rectangle (3.25,7.04);

\node[starvertex] (s1) at (0,6.50) {};
\node[starvertex] (t1) at (0,0) {};
\node[starvertex] (s2) at (-2.24,3.23) {};
\node[starvertex] (t2) at (2.10,3.23) {};
\node[diamondvertex] (sm) at (-1.94,4.63) {};
\node[diamondvertex] (sp) at (-1.91,1.80) {};
\node[diamondvertex] (tm) at (1.76,4.80) {};
\node[diamondvertex] (tp) at (1.73,1.58) {};

\draw[pathline]
    (s1) .. controls (-0.75,6.15) and (-1.56,5.43) .. (sm)
    pic[pos=0.65,sloped,allow upside down] {manuscriptdirection};
\draw[pathline,arrow]
    (s2) .. controls (-2.25,2.65) and (-2.14,2.19) .. (sp);
\draw[pathline]
    (sp) .. controls (-1.53,1.08) and (-0.77,0.34) .. (t1)
    pic[pos=0.75,sloped,allow upside down] {manuscriptdirection};

\draw[pathline]
    (s1) .. controls (-1.37,4.55) and (-1.37,1.86) .. (t1)
    pic[pos=0.25,sloped,allow upside down] {manuscriptdirection}
    pic[pos=0.84,sloped,allow upside down] {manuscriptdirection};

\draw[pathline]
    (s1) .. controls (0.73,6.23) and (1.44,5.53) .. (tm)
    pic[pos=0.60,sloped,allow upside down] {manuscriptdirection};
\draw[pathline]
    (tm) .. controls (1.95,4.34) and (2.09,3.70) .. (t2)
    pic[pos=0.86,sloped,allow upside down] {figfivearrow};
\draw[pathline]
    (tp) .. controls (1.34,0.91) and (0.69,0.31) .. (t1)
    pic[pos=0.75,sloped,allow upside down] {manuscriptdirection};

\draw[dashedroute,arrow]
    (t2) .. controls (2.10,2.61) and (1.91,1.91) .. (tp);
\draw[dashedroute,arrow] (tp) -- (tm);
\draw[dashedroute,arrow] (tm) -- (sp);
\draw[dashedroute,arrow] (sp) -- (sm);
\draw[dashedroute]
    (sm) .. controls (-2.12,4.20) and (-2.22,3.75) .. (s2)
    pic[pos=0.86,sloped,allow upside down] {figfivearrow};

\draw[solidroute]
    (s1) .. controls (1.25,4.52) and (1.25,1.87) .. (t1)
    pic[pos=0.25,sloped,allow upside down] {manuscriptdirection}
    pic[pos=0.84,sloped,allow upside down] {manuscriptdirection};

\node[lab,above=5pt] at (s1.north) {$s_1$};
\node[lab,below=5pt] at (t1.south) {$t_1$};
\node[lab,left=5pt] at (s2.west) {$s_2$};
\node[lab,right=5pt] at (t2.east) {$t_2$};
\node[lab,left=5pt] at (sm.west) {$s_2^-$};
\node[lab,left=5pt] at (sp.west) {$s_2^+$};
\node[lab,right=5pt] at (tm.east) {$t_2^-$};
\node[lab,right=5pt] at (tp.east) {$t_2^+$};
\node[lab] at (-1.42,0.67) {$P_1$};
\node[lab] at (-0.84,0.85) {$P_2$};
\node[lab] at (0.84,0.85) {$P_3$};
\node[lab] at (1.35,0.67) {$P_4$};

\node[starvertex] at (2.25,6.46) {};
\node[diamondvertex] at (2.25,6.00) {};
\node[lab,anchor=west] at (2.52,6.46) {$V_1$};
\node[lab,anchor=west] at (2.52,6.00) {$V_2$};
\end{tikzpicture}%
}
\caption{}
\label{fig5b:thm}
\end{subfigure}
\caption{\(D-\{s_2,t_2\}\) has exactly two internally disjoint \((s_1,t_1)\)-paths. (a) \(s_2^+ \to t_2^-\); (b) \(t_2^- \to s_2^+\).}
\label{fig5:thm}
\end{figure}

By the minimality of \(P_1\) and \(P_4\), we have \(s_2^+ \to s_2^-\) and \(t_2^+ \to t_2^-\). If \(s_2^+ \to t_2^-\), then the \((s_2,t_2)\)-path \(s_2 s_2^+ t_2^- t_2\) is disjoint from \(P_i\) for all \(i \in \{2,3\}\), which contradicts \eqref{eq:contradiction} (See Figure \ref{fig5a:thm} ). Consequently, \(t_2^- \to s_2^+\). However, the \((t_2,s_2)\)-path \(t_2 t_2^+ t_2^- s_2^+ s_2^- s_2\) is also disjoint from \(P_i\) for all \(i \in \{2,3\}\), yielding another contradiction to \eqref{eq:contradiction} (See Figure \ref{fig5b:thm}). Hence, we may assume that
\(D - \{s_2, t_2\}\) contains three internally disjoint \((s_1, t_1)\)-paths.
Since $\{s_1,t_1,s_2,t_2\}\subseteq V_1$ and \(D\) is \(4\)-strong, by symmetry, $D-\{s_1,t_1\}$ contains three internally disjoint $(s_2,t_2)$‑paths.\end{proof}

Combining Claim \ref{claim:case2--there is 3 (s_i,t_i)-paths in D-{s_{3-i},t_{3-i}}} and Lemma \ref{Lemma:{s_1,t_1} in V_1 : (3,3)}, \(D\) contains a pair of disjoint \((s_1,t_1)\)-path and \((s_2,t_2)\)-path, which contradicts \eqref{eq:contradiction}.

\vspace{2mm}
\mycase \label{case3} 
$|\{s_i, t_i\} \cap V_1| = 1$ and $|\{s_{3-i}, t_{3-i}\} \cap V_1| = 2$, {for some} $i \in \{1,2\}$.
\vspace{2mm}

W.l.o.g., {assume that} \(i=2\), \(s_2\in V_1\) and \(t_2\in V_2\) (The case \(t_2\in V_1\) and \(s_2\in V_2\) follows by reversing all arcs of \(D\) to get the digraph \(D'\), and the same argument applies).

\begin{claim}\label{claim: s_2 not in V(P_1),V(P_2),V(P_3), V(P_4)}
\(s_2 \notin S\)
\end{claim}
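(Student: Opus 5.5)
Suppose, for a contradiction, that \(s_2\in S\). We work directly with the four minimal internally disjoint \((s_1,t_1)\)-paths \(P_1,\dots,P_4\). The aim is to build a short path between \(s_2\) and \(t_2\), in one of the two directions, that uses only vertices of a single path \(P_1\) together with \(t_2\). Such a path is disjoint from some other \(P_i\) that avoids \(t_2\), and this \(P_i\) together with the short path contradicts \eqref{eq:contradiction}.

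W.l.o.g.\ \(s_2\in V(P_1)\). Since \(s_2\neq s_1,t_1\), the vertex \(s_2\) is an internal vertex of \(P_1\). Let \(s_2^-\) and \(s_2^+\) be its predecessor and successor on \(P_1\). Because \(s_1,t_1,s_2\in V_1\) and \(V_1\) is independent, we have \(s_2^-\neq s_1\), \(s_2^+\neq t_1\) and \(s_2^-,s_2^+\in V_2\).

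First suppose \(t_2\in V(P_1)\). Then the subpath of \(P_1\) between \(s_2\) and \(t_2\), in whichever direction it runs, is an \((s_2,t_2)\)- or \((t_2,s_2)\)-path. It avoids \(P_2\), which is an \((s_1,t_1)\)-path avoiding \(s_2,t_2\). This contradicts \eqref{eq:contradiction}.

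So we may assume \(t_2\notin V(P_1)\). The vertex \(t_2\) lies on at most one of \(P_2,P_3,P_4\), so at least two of these paths avoid both \(s_2\) and \(t_2\). Now \(t_2,s_2^+\in V_2\) are adjacent. We consider the two possible directions of this arc.

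\begin{itemize}
\item If \(s_2^+\to t_2\), then \(s_2s_2^+t_2\) is an \((s_2,t_2)\)-path disjoint from such a \(P_i\), a contradiction.
\item Hence \(t_2\to s_2^+\). By the minimality of \(P_1\), the arc between \(s_2^-\) and \(s_2^+\) (both in \(V_2\), hence adjacent) must be \(s_2^+\to s_2^-\). Then \(t_2s_2^+s_2^-s_2\) is a \((t_2,s_2)\)-path. Its vertices other than \(t_2\) lie on \(P_1\), so it is disjoint from some \(P_i\) with \(i\in\{2,3,4\}\) and \(t_2\notin V(P_i)\). This again contradicts \eqref{eq:contradiction}.
\end{itemize}

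Hence \(s_2\notin S\).

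The argument is short and has no real obstacle. The points to verify carefully are the ones that make the short path valid. First, \(s_2^\pm\) exist and lie in \(V_2\), because \(s_1,t_1,s_2\) all lie in the independent set \(V_1\). Second, minimality of \(P_1\) forces the arc \(s_2^+\to s_2^-\). Third, only one of \(P_2,P_3,P_4\) can be used up by \(t_2\), so a disjoint \((s_1,t_1)\)-path always remains.
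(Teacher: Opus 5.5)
Your proof is correct and follows essentially the same idea as the paper. Minimality of the path through $s_2$ forces a backward arc between two of its $V_2$-vertices, and then either orientation of the arc between $t_2$ and one of them gives a short path between $s_2$ and $t_2$ avoiding some other $P_i$. The differences are cosmetic: the paper pivots on $s_1^{j'}$ (the successor of $s_1$) instead of on $s_2^-$ and $s_2^+$. Also, your separate treatment of $t_2\in V(P_1)$ makes explicit the degenerate cases, such as $t_2=s_2^+$, that the paper's write-up leaves implicit.
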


\begin{proof}
   Suppose that \(s_2 \in V(P_{j'})\) for some \(j' \in [4]\). Let \(s_2^+{\in V_2}\) denote the successor of \(s_2\) along \(P_{j'}[s_2,t_1]\).
    By the minimality of \(P_{j'}\), we have \(s_2^+ \to s_1^{j'}\), where \(|P_{j'}[s_1,s_2]| \geq 3\) and \(s_1^{j'} \in V_2\).
   If \(s_1^{j'} \to t_2\), then the \((s_2,t_2)\)-path \(s_2 s_2^+ s_1^{j'} t_2\) is disjoint from \(P_i\) for some \(i \in \intfour \setminus \{j'\}\). This forces \(t_2 \to s_1^{j'}\) {and then} the \((t_2,s_2)\)-path \(t_2s_1^{j'} \circ P_{j'}[s_1^{j'},s_2]\) is also disjoint from \(P_i\) for some \(i \in \intfour \setminus \{j'\}\){, a contradiction}. Consequently, \(s_2 \notin S\).
\end{proof}

By Claim \ref{claim: s_2 not in V(P_1),V(P_2),V(P_3), V(P_4)}, there exist at least three internally disjoint \((s_1,t_1)\)-paths in \(D-\{s_2,t_2\}\). If \(t_1\) or \(s_1\) does not belong to \(V(Q_1) \cup V(Q_2) \cup V(Q_3) \cup V(Q_4)\), then \(D-\{s_1,t_1\}\) contains at least three internally disjoint \((s_2,t_2)\)-paths. Since \(\{s_1,t_1\} \subseteq V_1\), {Lemma \ref{Lemma:{s_1,t_1} in V_1 : (3,3)} implies that $D$ }admits a pair of disjoint \((s_1,t_1)\)-path and \((s_2,t_2)\)-path, {contradicting}  \eqref{eq:contradiction}. Combining this with the fact that \(D\) is \(4\)-strong, we conclude that \(D-\{s_1,t_1\}\) has exactly two internally disjoint \((s_2,t_2)\)-paths.

This implies that \(s_1 \in V(Q_i)\) and \(t_1 \in V(Q_j)\) for distinct indices \(i,j \in \intfour\). W.l.o.g., assume \(s_1 \in V(Q_1)\) and \(t_1 \in V(Q_4)\). Consequently, \(Q_2\) and \(Q_3\) are also internally disjoint \((s_2,t_2)\)-paths in \(D-\{s_1,t_1\}\). Since \(s_1, s_2 \in V_1\), we {know that } \(s_1^i \in V_2\) for all \(i \in \intfour\) and \(s_2^4 \in V_2\).%

\begin{figure}[H]
\centering
\tikzset{pics/manuscriptdirection/.style={code={
  \fill[black] (0pt,0pt) -- (-6pt,2pt) -- (-6pt,-2pt) -- cycle;
}}}

\tikzset{
    figseven/.style={font=\large,line cap=round,line join=round},
    fsevenline/.style={draw=black,line width=0.8pt},
    fsevensolid/.style={draw=black,line width=1.4pt,solid},
    fsevendash/.style={draw=black,line width=1.2pt,
        dash pattern=on 4pt off 2.5pt,line cap=butt},
    fsevenarrow/.style={-{Triangle[length=6pt,width=4pt,line width=0pt]},
        shorten >=0pt},
    pics/figsevenarrow/.style={code={
        \fill[black] (0pt,0pt) -- (-6pt,2pt) -- (-6pt,-2pt) -- cycle;
    }},
    fsevenstar/.style={shape=star,star points=5,star point ratio=2.35,
        draw=none,fill=black,minimum size=12pt,inner sep=0pt,outer sep=0pt},
    fsevendiamond/.style={diamond,draw=none,fill=black,
        minimum size=7.5pt,inner sep=0pt,outer sep=0pt},
    fsevendot/.style={circle,draw=black,fill=black,
        minimum size=5.5pt,inner sep=0pt,outer sep=0pt},
    fsevenlabel/.style={inner sep=1pt}
}
\begin{subfigure}{0.43\textwidth}
\centering
\resizebox{\linewidth}{!}{%
\begin{tikzpicture}[figseven]
\path[use as bounding box] (-4.18,-0.64) rectangle (3.43,7.00);

\node[fsevenstar] (s1) at (0,6.40) {};
\node[fsevenstar] (t1) at (0,0) {};
\node[fsevenstar] (s2) at (-3.43,3.23) {};
\node[fsevendiamond] (t2) at (2.05,3.40) {};
\node[fsevendiamond] (s11) at (-1.23,5.56) {};
\node[fsevendiamond] (s12) at (-0.55,5.43) {};
\node[fsevendiamond] (s13) at (0.48,5.45) {};
\node[fsevendiamond] (s14) at (1.31,5.37) {};
\node[fsevendiamond] (s24) at (-3.14,1.72) {};
\node[fsevendiamond] (q1d) at (-3.06,4.73) {};
\node[fsevendiamond] (p1d) at (-1.53,1.15) {};
\node[fsevendiamond] (p2d) at (-0.64,1.16) {};
\node[fsevendiamond] (p3d) at (0.57,1.16) {};
\node[fsevendiamond] (p4d) at (1.49,1.22) {};
\node[fsevendot] (x) at (0.92,3.47) {};

\draw[fsevenline,fsevenarrow]
    (s1) .. controls (-0.45,6.20) and (-0.92,5.91) .. (s11);
\draw[fsevenline]
    (s11.center) .. controls (-2.61,3.95) and (-2.36,2.30) .. (p1d.center)
    pic[pos=0.55,sloped,allow upside down] {manuscriptdirection}
    .. controls (-1.10,0.62) and (-0.49,0.22) .. (t1.center)
    pic[pos=0.65,sloped,allow upside down] {manuscriptdirection};

\draw[fsevensolid,fsevenarrow]
    (s1) .. controls (-0.20,6.08) and (-0.41,5.72) .. (s12);
\draw[fsevensolid]
    (s12.center) .. controls (-1.17,3.90) and (-1.16,2.40) .. (p2d.center)
    pic[pos=0.55,sloped,allow upside down] {manuscriptdirection}
    .. controls (-0.46,0.69) and (-0.18,0.25) .. (t1.center)
    pic[pos=0.65,sloped,allow upside down] {manuscriptdirection};

\draw[fsevenline,fsevenarrow]
    (s1) .. controls (0.18,6.13) and (0.35,5.83) .. (s13);
\draw[fsevenline]
    (x.center) .. controls (0.97,2.62) and (0.85,1.86) .. (p3d.center)
    pic[pos=0.45,sloped,allow upside down] {manuscriptdirection}
    .. controls (0.38,0.66) and (0.14,0.23) .. (t1.center)
    pic[pos=0.65,sloped,allow upside down] {manuscriptdirection};

\draw[fsevenline]
    (s1.center) .. controls (0.45,6.20) and (0.96,5.82) .. (s14.center)
    pic[pos=0.72,sloped,allow upside down] {manuscriptdirection}
    .. controls (1.74,4.81) and (2.08,4.09) .. (t2.center)
    pic[pos=0.60,sloped,allow upside down] {manuscriptdirection}
    .. controls (2.11,2.46) and (1.89,1.74) .. (p4d.center)
    pic[pos=0.55,sloped,allow upside down] {manuscriptdirection}
    .. controls (1.15,0.75) and (0.52,0.22) .. (t1.center)
    pic[pos=0.70,sloped,allow upside down] {manuscriptdirection};

\draw[fsevenline]
    (s2.center) .. controls (-3.35,3.83) and (-3.23,4.32) .. (q1d.center)
    pic[pos=0.60,sloped,allow upside down] {manuscriptdirection}
    .. controls (-2.50,6.00) and (-1.70,6.45) .. (s1.center)
    pic[pos=0.65,sloped,allow upside down] {manuscriptdirection}
    .. controls (2.59,6.43) and (2.93,5.02) .. (t2.center)
    pic[pos=0.75,sloped,allow upside down] {manuscriptdirection};

\draw[fsevenline]
    (s2.center) .. controls (-2.66,4.12) and (-2.35,4.30) .. (-1.90,4.20)
    pic[pos=0.35,sloped,allow upside down] {manuscriptdirection}
    .. controls (-1.30,4.15) and (-0.77,3.86) .. (-0.28,4.16)
    .. controls (0.42,4.50) and (0.87,5.25) .. (s14.center)
    pic[pos=0.55,sloped,allow upside down] {manuscriptdirection}
    .. controls (1.66,4.51) and (1.23,3.89) .. (x.center)
    pic[pos=0.55,sloped,allow upside down] {manuscriptdirection};

\draw[fsevenline]
    (s2.center) .. controls (-2.73,2.43) and (-2.02,1.25) .. (p1d.center)
    pic[pos=0.30,sloped,allow upside down] {manuscriptdirection}
    .. controls (-0.99,1.03) and (-0.36,1.63) .. (0.18,1.86)
    pic[pos=0.50,sloped,allow upside down] {manuscriptdirection}
    .. controls (0.62,2.08) and (1.12,2.06) .. (1.39,2.24)
    .. controls (1.81,2.43) and (2.00,2.99) .. (t2.center)
    pic[pos=0.75,sloped,allow upside down] {manuscriptdirection};

\draw[fsevenline]
    (s24.center) .. controls (-2.66,0.47) and (-1.52,-0.08) .. (t1.center)
    pic[pos=0.60,sloped,allow upside down] {manuscriptdirection}
    .. controls (2.14,-0.13) and (3.42,1.20) .. (t2.center)
    pic[pos=0.65,sloped,allow upside down] {manuscriptdirection};

\draw[fsevenline,fsevenarrow]
    (s24) .. controls (-2.60,4.10) and (-2.05,4.72) .. (s11);
\draw[fsevenline,fsevenarrow] (s24) -- (s12);

\draw[fsevendash,fsevenarrow]
    (s2) .. controls (-3.40,2.61) and (-3.26,1.95) .. (s24);
\draw[fsevendash,fsevenarrow] (s24) -- (s13);
\draw[fsevendash]
    (s13.center) .. controls (0.78,4.63) and (0.89,3.99) .. (x.center)
    pic[pos=0.60,sloped,allow upside down] {figsevenarrow}
    .. controls (1.09,3.27) and (1.60,3.30) .. (t2.center)
    pic[pos=0.60,sloped,allow upside down] {figsevenarrow};

\node[fsevenlabel,above=5pt] at (s1.north) {$s_1$};
\node[fsevenlabel,below=5pt] at (t1.south) {$t_1$};
\node[fsevenlabel,left=5pt] at (s2.west) {$s_2$};
\node[fsevenlabel,right=5pt] at (t2.east) {$t_2$};
\node[fsevenlabel,left=4pt] at (s11.west) {$s_1^1$};
\node[fsevenlabel,left=1pt,yshift=-3pt] at (s12.west) {$s_1^2$};
\node[fsevenlabel,left=4pt] at (s13.west) {$s_1^3$};
\node[fsevenlabel,right=4pt] at (s14.east) {$s_1^4$};
\node[fsevenlabel,left=4pt] at (s24.west) {$s_2^4$};
\node[fsevenlabel,left=5pt] at (x.west) {$x$};
\node[fsevenlabel] at (-1.43,0.65) {$P_1$};
\node[fsevenlabel] at (-0.84,0.78) {$P_2$};
\node[fsevenlabel] at (0.68,0.80) {$P_3$};
\node[fsevenlabel] at (1.33,0.65) {$P_4$};
\node[fsevenlabel] at (-3.03,5.55) {$Q_1$};
\node[fsevenlabel] at (-0.22,3.85) {$Q_2$};
\node[fsevenlabel] at (0.02,2.13) {$Q_3$};
\node[fsevenlabel] at (-3.10,1.10) {$Q_4$};

\node[fsevenstar] at (2.71,6.43) {};
\node[fsevendiamond] at (2.71,5.97) {};
\node[fsevenlabel,anchor=west] at (2.98,6.43) {$V_1$};
\node[fsevenlabel,anchor=west] at (2.98,5.97) {$V_2$};
\end{tikzpicture}%
}
\caption{}
\label{fig7a:thm}
\end{subfigure}
\hspace{4mm}
\begin{subfigure}{0.45\textwidth}
\centering
\resizebox{\linewidth}{!}{%
\begin{tikzpicture}[figseven]
\path[use as bounding box] (-4.18,-0.64) rectangle (4.24,7.00);

\node[fsevenstar] (s1) at (0,6.40) {};
\node[fsevenstar] (t1) at (0,0) {};
\node[fsevenstar] (s2) at (-3.44,3.23) {};
\node[fsevendiamond] (t2) at (3.46,3.24) {};
\node[fsevendiamond] (s11) at (-1.24,5.57) {};
\node[fsevendiamond] (s12) at (-0.55,5.44) {};
\node[fsevendiamond] (s13) at (0.48,5.47) {};
\node[fsevendiamond] (s14) at (1.32,5.39) {};
\node[fsevendiamond] (s24) at (-3.15,1.72) {};
\node[fsevendiamond] (q1d) at (-3.29,4.10) {};
\node[fsevendiamond] (p1d) at (-1.52,1.15) {};
\node[fsevendiamond] (p2d) at (-0.64,1.18) {};
\node[fsevendiamond] (p3d) at (0.59,1.21) {};
\node[fsevendiamond] (p4d) at (1.48,1.22) {};
\node[fsevendot] (y) at (2.08,3.38) {};

\draw[fsevenline,fsevenarrow]
    (s1) .. controls (-0.45,6.20) and (-0.92,5.92) .. (s11);
\draw[fsevenline]
    (s11.center) .. controls (-2.61,3.95) and (-2.36,2.30) .. (p1d.center)
    pic[pos=0.55,sloped,allow upside down] {manuscriptdirection}
    .. controls (-1.10,0.62) and (-0.49,0.22) .. (t1.center)
    pic[pos=0.65,sloped,allow upside down] {manuscriptdirection};

\draw[fsevensolid,fsevenarrow]
    (s1) .. controls (-0.20,6.08) and (-0.41,5.73) .. (s12);
\draw[fsevensolid]
    (s12.center) .. controls (-1.17,3.91) and (-1.16,2.42) .. (p2d.center)
    pic[pos=0.55,sloped,allow upside down] {manuscriptdirection}
    .. controls (-0.46,0.70) and (-0.18,0.25) .. (t1.center)
    pic[pos=0.65,sloped,allow upside down] {manuscriptdirection};

\draw[fsevenline,fsevenarrow]
    (s1) .. controls (0.18,6.13) and (0.35,5.85) .. (s13);
\draw[fsevenline]
    (s13.center) .. controls (1.08,3.96) and (1.09,2.57) .. (p3d.center)
    pic[pos=0.55,sloped,allow upside down] {manuscriptdirection}
    .. controls (0.38,0.66) and (0.14,0.23) .. (t1.center)
    pic[pos=0.65,sloped,allow upside down] {manuscriptdirection};

\draw[fsevenline,fsevenarrow]
    (s1) .. controls (0.45,6.20) and (0.97,5.84) .. (s14);
\draw[fsevenline]
    (y.center) .. controls (2.12,2.55) and (1.85,1.74) .. (p4d.center)
    pic[pos=0.55,sloped,allow upside down] {manuscriptdirection}
    .. controls (1.12,0.70) and (0.45,0.20) .. (t1.center)
    pic[pos=0.65,sloped,allow upside down] {manuscriptdirection};

\draw[fsevenline]
    (s2.center) .. controls (-3.43,3.54) and (-3.37,3.82) .. (q1d.center)
    pic[pos=0.60,sloped,allow upside down] {manuscriptdirection}
    .. controls (-2.80,5.91) and (-1.75,6.57) .. (s1.center)
    pic[pos=0.65,sloped,allow upside down] {manuscriptdirection}
    .. controls (2.29,6.24) and (2.83,5.61) .. (t2.center)
    pic[pos=0.75,sloped,allow upside down] {manuscriptdirection};

\draw[fsevenline]
    (s2.center) .. controls (-2.66,4.12) and (-2.35,4.30) .. (-1.90,4.20)
    pic[pos=0.35,sloped,allow upside down] {manuscriptdirection}
    .. controls (-1.25,4.06) and (-0.90,3.97) .. (-0.34,4.22)
    .. controls (0.27,4.44) and (0.84,5.19) .. (s14.center)
    pic[pos=0.55,sloped,allow upside down] {manuscriptdirection}
    .. controls (1.39,4.22) and (1.19,3.93) .. (y.center)
    pic[pos=0.55,sloped,allow upside down] {manuscriptdirection};

\draw[fsevenline]
    (s2.center) .. controls (-2.73,2.43) and (-2.02,1.25) .. (p1d.center)
    pic[pos=0.30,sloped,allow upside down] {manuscriptdirection}
    .. controls (-0.88,1.01) and (0.20,2.12) .. (1.05,2.26)
    pic[pos=0.50,sloped,allow upside down] {manuscriptdirection}
    .. controls (1.53,2.30) and (2.00,2.09) .. (2.34,2.09)
    .. controls (2.92,2.09) and (3.23,2.56) .. (t2.center)
    pic[pos=0.75,sloped,allow upside down] {manuscriptdirection};

\draw[fsevenline]
    (s24.center) .. controls (-2.66,0.47) and (-1.52,-0.08) .. (t1.center)
    pic[pos=0.60,sloped,allow upside down] {manuscriptdirection}
    .. controls (2.35,-0.03) and (3.13,0.88) .. (t2.center)
    pic[pos=0.65,sloped,allow upside down] {manuscriptdirection};

\draw[fsevenline,fsevenarrow]
    (s24) .. controls (-2.33,2.97) and (-1.59,4.40) .. (s11);
\draw[fsevenline,fsevenarrow]
    (s24) .. controls (-2.10,2.97) and (-1.15,4.03) .. (s12);
\draw[fsevenline,fsevenarrow]
    (s24) .. controls (-1.33,3.14) and (-0.48,3.66) .. (s13);

\draw[fsevendash,fsevenarrow]
    (s2) .. controls (-3.40,2.61) and (-3.26,1.95) .. (s24);
\draw[fsevendash,fsevenarrow]
    (s24) .. controls (-0.89,2.61) and (0.32,3.19) .. (s14);
\draw[fsevendash]
    (s14.center) .. controls (1.75,4.88) and (2.01,4.18) .. (y.center)
    pic[pos=0.60,sloped,allow upside down] {figsevenarrow}
    .. controls (2.49,3.17) and (2.97,3.20) .. (t2.center)
    pic[pos=0.60,sloped,allow upside down] {figsevenarrow};

\node[fsevenlabel,above=5pt] at (s1.north) {$s_1$};
\node[fsevenlabel,below=5pt] at (t1.south) {$t_1$};
\node[fsevenlabel,left=5pt] at (s2.west) {$s_2$};
\node[fsevenlabel,right=5pt] at (t2.east) {$t_2$};
\node[fsevenlabel,left=4pt] at (s11.west) {$s_1^1$};
\node[fsevenlabel,left=1pt,yshift=-3pt] at (s12.west) {$s_1^2$};
\node[fsevenlabel,left=4pt] at (s13.west) {$s_1^3$};
\node[fsevenlabel,right=4pt] at (s14.east) {$s_1^4$};
\node[fsevenlabel,left=4pt] at (s24.west) {$s_2^4$};
\node[fsevenlabel,above right=3pt] at (y.north east) {$y$};
\node[fsevenlabel] at (-1.43,0.65) {$P_1$};
\node[fsevenlabel] at (-0.84,0.78) {$P_2$};
\node[fsevenlabel] at (0.70,0.82) {$P_3$};
\node[fsevenlabel] at (1.33,0.65) {$P_4$};
\node[fsevenlabel] at (-3.35,4.75) {$Q_1$};
\node[fsevenlabel] at (-2.72,4.42) {$Q_2$};
\node[fsevenlabel] at (-2.30,1.38) {$Q_3$};
\node[fsevenlabel] at (-2.85,0.75) {$Q_4$};

\node[fsevenstar] at (2.71,6.43) {};
\node[fsevendiamond] at (2.71,5.97) {};
\node[fsevenlabel,anchor=west] at (2.98,6.43) {$V_1$};
\node[fsevenlabel,anchor=west] at (2.98,5.97) {$V_2$};
\end{tikzpicture}%
}
\caption{}
\label{fig7b:thm}
\end{subfigure}
\caption{\(s_2 \notin S\). \(D-\{s_1,t_1\}\) has exactly two internally disjoint \((s_2,t_2)\)-paths. (a) \(t_2 \in V(P_{k'})\) for some \(k' \in \intfour\); (b) \(t_2 \notin V(P_i)\) for all \(i \in \intfour\).}
\label{fig7:thm}
\end{figure}

Suppose that \(t_2 \in V(P_{k'})\) for some \(k' \in \intfour\). 
If \(s_1^i \to s_2^4\) for some \(i \in \intfour \setminus \{k'\}\), then the \((s_1,t_1)\)-path \(s_1s_1^i \circ s_1^is_2^4 \circ Q_4[s_2^4,t_1]\) is disjoint from \(Q_j\) for some \(j \in \{2,3\}\), which contradicts \eqref{eq:contradiction}. Therefore, 
\begin{equation}\label{equ:s_2^4 -> s_1^i}
s_2^4 \to s_1^i \text{ for every } i \in \intfour \setminus \{k'\}.
\end{equation}
Furthermore, if \(s_1^i \to t_2\) for some \(i \in \intfour \setminus \{k'\}\), then the \((s_2,t_2)\)-path \(s_2 s_2^4 s_1^i t_2\) is disjoint from \(P_j\) for some \(j \in \intfour \setminus \{k'\}\), again yielding a contradiction to \eqref{eq:contradiction}. Thus, we have \(t_2 \to s_1^i\) for every \(i \in \intfour \setminus \{k'\}\).

Let \(x\) be the last vertex of \(Q_2\) contained in \(S \setminus V(P_{k'})\). Then \(x \in V(P_{l'})\) for some \(l' \in \intfour \setminus \{k'\}\). The \((s_2,t_2)\)-path
\(
s_2s_2^4 \circ s_2^4s_1^{l'} \circ P_{l'}[s_1^{l'},x] \circ Q_2[x,t_2]
\)
is disjoint from \(P_i\) for some \(i \in \intfour \setminus \{k',l'\}\), which contradicts \eqref{eq:contradiction} (See Figure \ref{fig7a:thm}).
Consequently, we may assume \(t_2 \notin S\). %

Combining \(t_2 \notin V(P_i)\) for all \(i \in \intfour\) with Claim \ref{claim: s_2 not in V(P_1),V(P_2),V(P_3), V(P_4)}, $P_1,\ldots{},P_4$ form a set of four internally disjoint \((s_1,t_1)\)-paths in \(D-\{s_2,t_2\}\). Let \(y\) be the last vertex of \(Q_2\) that lies in \(S\), and assume \(y \in V(P_{i'})\) for some \(i' \in \intfour\). Similarly to \eqref{equ:s_2^4 -> s_1^i}, %
we deduce that \(s_2^4 \to s_1^i\) and \(t_2 \to s_1^i\) hold for all \(i \in \intfour\). Then the \((s_2,t_2)\)-path
\(
s_2s_2^4 \circ s_2^4s_1^{i'} \circ P_{i'}[s_1^{i'},y] \circ Q_2[y,t_2]
\)
is disjoint from \(P_i\) for some \(i \in \intfour \setminus \{i'\}\), {contradicting} \eqref{eq:contradiction} (See Figure \ref{fig7b:thm}).  This completes the proof of the theorem.

\end{proof}

The next result shows that the bound in Theorem \ref{thm:mainsplit} is best possible.

\begin{theorem}\label{thm:main}
For every integer $n\geq 8$, there is an $n$-vertex $3$-strong split digraph that is not unilaterally $2$-linked. 
\end{theorem}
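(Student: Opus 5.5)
The plan is to exhibit an explicit family. For each $n\ge 8$ I will build a split digraph $D_n=(V_1,V_2;A)$ with a small core and place the four terminals so that the structural arguments in the proof of Theorem~\ref{thm:mainsplit} fail for exactly one missing unit of connectivity. The padding will consist of extra vertices of $V_1$, each forming a $2$-cycle with every vertex of $V_2$; this gives arbitrary order $n$. I have not yet fixed the concrete neighbourhoods, so the construction below is a plan whose key condition still has to be checked.

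The proof of Theorem~\ref{thm:mainsplit} suggests where to look. Case~\ref{case1} ($s_1,s_2\in V_1$, $t_1,t_2\in V_2$) and Case~\ref{case2} (all terminals in $V_1$) are the delicate ones, and in both the contradiction only comes once a fourth path is available. I will therefore put all four terminals in $V_1$, so that every $(s_i,t_i)$- or $(t_i,s_i)$-path leaves and enters its endpoints through $V_2$. The neighbourhoods will mimic the proof of Theorem~\ref{thm:u2lnknoK}:
\[
N^+(s_1)=N^+(t_1)=X,\qquad N^-(s_1)=N^-(t_1)=Y,
\]
\[
N^+(s_2)=N^+(t_2)=Y',\qquad N^-(s_2)=N^-(t_2)=X',
\]
where $X,Y,X',Y'$ are $3$-subsets of a small semicomplete $V_2$ (say $|V_2|=4$ when $n=8$) and $X\cap X'\neq\emptyset$, $Y\cap Y'\ne\emptyset$. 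With this choice, the same argument as in Theorem~\ref{thm:u2lnknoK} shows that any unilateral $P_1$ yields an $(X,Y)$-walk through $V_2$ and the middle of $D$, whatever its direction. Likewise any $P_2$ yields a $(Y',X')$-walk.

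Next I will orient $D[V_2]$ and choose the sets so that these two requirements cannot be met disjointly. The intended obstruction is a counting or blocking argument. Since $V_1$ is independent, each path alternates between $V_2$ and $V_1$ and must use at least one vertex of $V_2$. The arcs of $D[V_2]$ will be directed against the needed direction (for example $Y\Rightarrow X$ inside $V_2$, as far as possible), so that a path from $X$ to $Y$ needs either two $V_2$-vertices or a common vertex of $X\cap Y$. Then I will check by a short case analysis, over which vertices of $V_2$ each path uses, that $P_1$ and $P_2$ always collide. Because the padding vertices of $V_1$ only shortcut between $V_2$-vertices, they do not affect this count: any path through a padding vertex still spends a $V_2$-vertex before and after it.

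Finally I will verify $3$-strong connectivity. For every set $Z$ of at most $2$ vertices I will check that $D-Z$ is strong. For the padding and terminal vertices this follows because each has at least $3$ in- and out-neighbours in $V_2$, as in the use of Proposition~5.8.5 of \cite{bang2009}, provided $D[V_2]$ together with the padding is $3$-strong. The latter holds because of the $2$-cycles with the padding vertices; but for $n=8$ there are no padding vertices, so there it must be checked by hand.

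The main obstacle is the tension between these two demands. Every terminal needs at least $3$ in- and out-neighbours inside a tiny $V_2$, which forces large overlaps such as $X\cap Y\neq\emptyset$. Such overlaps create single-vertex connections $s_1\to x\to t_1$, so the blocking must come from $P_2$ being forced through that same $x$. If $|V_2|=4$ turns out to be too small, I will first try $|V_2|=5$ with only the terminals and one padding vertex in $V_1$, giving $n\ge 10$, and then handle $n=8,9$ by separate small constructions.
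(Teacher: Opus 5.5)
\paragraph{There is a genuine gap.} Your proposal is not yet a proof. No digraph is specified, and the non-linkage is left as an ``intended obstruction''. More seriously, the design you sketch cannot be completed.

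\emph{The case $n=8$.} With all four terminals in the independent part, $3$-strong connectivity forces $N^+(s_i)$ and $N^-(t_i)$ to be subsets of $V_2$ of size at least $3$. For $n=8$ we have $|V_2|\le 4$, so $|N^+(s_i)\cap N^-(t_i)|\ge 2$ for $i=1,2$; in your notation, $|X\cap Y|,|X'\cap Y'|\ge 2$. Choosing distinct $x\in N^+(s_1)\cap N^-(t_1)$ and $y\in N^+(s_2)\cap N^-(t_2)$ gives disjoint paths $s_1xt_1$ and $s_2yt_2$. So no $8$-vertex example can have all four terminals in $V_1$.

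\emph{The padding.} It is fatal as well. A vertex $p\in V_1$ with $p\leftrightarrow V_2$ gives paths $apb$ and $bpa$ for all $a,b\in V_2$, which erases any orientation you impose on $D[V_2]$. Counting alone cannot block either, since the forced overlaps make one-vertex connections $s_1x_0t_1$ available. Concretely, suppose there are two padding vertices $p,p'$. Apply Hall's theorem to the four sets $N^+(s_1),N^-(t_1),N^+(s_2),N^-(t_2)$, each of size at least $3$. Either they have distinct representatives $x,y,x',y'$, giving disjoint paths $s_1xpyt_1$ and $s_2x'p'y't_2$, or all four are one common $3$-set and two-arc paths suffice.

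\emph{Your fallback.} With $|V_2|=5$ and one padding vertex, avoiding two-arc linkages forces $X\cap Y=X'\cap Y'=\{x_0\}$. Then $s_1x_0t_1$ and $s_2y'px't_2$, with $y'\in Y'\setminus\{x_0\}$ and $x'\in X'\setminus\{x_0\}$, are disjoint.

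\paragraph{The missing idea.} Supply part of each terminal's degree by neighbours that its own path can never use. The paper places $s_1,s_2$ in the independent set and $t_1,t_2$ in a six-vertex tournament, with $s_1\leftrightarrow t_2$ and $s_2\leftrightarrow t_1$.
\begin{itemize}
\item In $D-\{s_2,t_2\}$, both $s_1$ and $t_1$ have out-neighbours only in $U=\{v_1,v_2\}$ (plus padding) and in-neighbours only in $W=\{v_3,v_4\}$.
\item In $D-\{s_1,t_1\}$, the roles of $U$ and $W$ are exchanged for $s_2,t_2$.
\item Since $U$ and $W$ are now disjoint $2$-sets, the arcs between them can form the $4$-cycle $v_1\to v_4\to v_2\to v_3\to v_1$. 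The only arcs into $W$ usable by the first path are $v_1v_4$ and $v_2v_3$. The only arcs back into $U$ usable by the second are $v_3v_1$ and $v_4v_2$. Each of the former meets each of the latter.
\item Padding vertices $x$ are attached to one side only: $\{t_1,v_1,v_2\}\to x\to\{t_2,v_1,v_2\}$. They get in- and out-degree $3$ but create no new connection between $U$ and $W$.
\end{itemize}
Strong $3$-connectivity is then checked by hand on the eight-vertex core and extends to the padding via this degree condition.
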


\begin{proof}
Let $m=n-8$ and $X=\{x_1,\ldots,x_m\}$ (possibly empty), and set
\[
I=\{s_1,s_2\}\cup X,\; 
T=\{t_1,t_2,v_1,v_2,v_3,v_4\},\;
U=\{v_1,v_2\} \;\text{and}\; W=\{v_3,v_4\}.
\]
We construct a digraph $D_n$ with vertex set $I\cup T$ as follows. 

\emph{Arcs inside $T$.} The six vertices of $T$ induce a tournament $H$ defined by
\begin{align}
&t_1\to\{t_2,v_1,v_2\},\;\{v_3,v_4\}\to t_1,\;
\{v_1,v_2\}\to t_2\to\{v_3,v_4\}, \label{eq:T1}\\[-2pt]
&v_1\to v_2,\; v_4\to v_3\;\text{and}\;
v_1\to v_4\to v_2\to v_3\to v_1.\label{eq:T2}
\end{align}

\emph{Arcs incident with $s_1,s_2$.} There is no arc between $s_1$ and $s_2$, and we add exactly
\begin{align}
&s_1\to\{v_1,v_2,t_2\},\; \{v_3,v_4,t_2\}\to s_1,\label{eq:s1}\\[-2pt]
&s_2\to\{t_1,v_3,v_4\},\; \{t_1,v_1,v_2\}\to s_2.\label{eq:s2}
\end{align}
Thus $s_1t_2,t_2s_1,s_2t_1,t_1s_2$ are all arcs.

\emph{Arcs incident with $X$.} For every $x\in X$, add exactly
\begin{equation}\label{eq:extra}
\{t_1,v_1,v_2\}\to x\;\text{and}\; x\to\{t_2,v_1,v_2\}.
\end{equation}
There are no arcs among vertices of $I$, and there are no other arcs. Hence $I$ is independent and $T$ induces a tournament; in particular, $D_n$ is a split digraph. The construction is illustrated schematically
in Figure~\ref{fig+construction}.

\begin{figure}[htbp]
\centering
\begin{tikzpicture}[
  scale=0.76,
  >={Triangle[length=1.8mm,width=1.3mm]},
  vert/.style={
    circle,
    draw=black,
    fill=white,
    line width=0.6pt,
    minimum size=9mm,
    inner sep=0pt,
    font=\normalsize
  },
  arc/.style={
    ->,
    draw=black!75,
    line width=0.65pt,
    line cap=rect
  },
  redarc/.style={
    ->,
    draw=red!75!black,
    line width=1pt,
    line cap=rect
  },
  bluearc/.style={
    ->,
    draw=blue!70!black,
    line width=1pt,
    line cap=rect
  },
  lab/.style={font=\normalsize}
]

\node[vert] (s1) at (-7.2, 2.2) {$s_1$};
\node[vert] (s2) at (-7.2,-2.2) {$s_2$};

\node[vert] (t1) at (0,3.0) {$t_1$};
\node[vert] (t2) at (0,-3.0) {$t_2$};

\node[vert] (v1) at (-2.6, 1.25) {$v_1$};
\node[vert] (v2) at (-2.6,-1.25) {$v_2$};
\node[vert] (v3) at (2.6, 1.25) {$v_3$};
\node[vert] (v4) at (2.6,-1.25) {$v_4$};

\node[vert] (x) at (7.2,0) {$x$};

\draw[dashed,black!55] (0,0)
  ellipse [x radius=4.1cm,y radius=3.95cm];

\node[lab] at (0,4.45) {$H=D_n[T]$};

\node[lab] at (8.5,-1)
  {For every $x\in X$.};

\draw[arc] (t1) -- (t2);
\draw[arc] (t1) -- (v1);
\draw[arc] (t1) -- (v2);

\draw[arc] (v3) -- (t1);
\draw[arc] (v4) -- (t1);

\draw[arc] (v1) -- (t2);
\draw[arc] (v2) -- (t2);

\draw[arc] (t2) -- (v3);
\draw[arc] (t2) -- (v4);

\draw[arc] (v1) -- (v2);
\draw[arc] (v4) -- (v3);

\draw[arc] (v1) -- (v4);
\draw[arc] (v2) -- (v3);

\draw[arc] (v3) -- (v1);
\draw[arc] (v4) -- (v2);

\draw[arc]
  (s1) to[out=-10,in=178,looseness=1.02] (v1);

\draw[arc]
  (s1) .. controls (-6.70,1.55)
               and (-4.95,-0.75) .. (v2);

\draw[arc]
  (s1) .. controls (-6.60,-0.55)
               and (-0.95,-5.20) .. (t2.243);

\draw[arc]
  (v3) to[out=168,in=28,looseness=1.18] (s1);

\draw[arc]
  (v4) .. controls (0.95,-1.95)
               and (-4.95,1.05) .. (s1);

\draw[arc]
  (t2) .. controls (-1.65,-4.55)
               and (-6.20,0.55) .. (s1);

\draw[arc]
  (t1) .. controls (-3.10,4.15)
               and (-6.85,0.35) .. (s2);

\draw[arc]
  (v1) to[out=232,in=16,looseness=1.22] (s2);

\draw[arc]
  (v2) to[out=192,in=4,looseness=1.04] (s2);

\draw[arc]
  (s2) .. controls (-6.15,-0.25)
               and (-3.55,3.55) .. (t1);

\draw[arc]
  (s2) .. controls (-5.4,-4.1)
               and (-0.8,-2.55) .. (v3);

\draw[arc]
  (s2) .. controls (-5.10,-3.15)
               and (0.95,-1.65) .. (v4.188);

\draw[arc]
  (t1) to[out=8,in=145,looseness=1.28] (x);

\draw[arc]
  (v1) .. controls (-0.95,0.7)
               and (5.95,-0.09) .. (x);

\draw[arc]
  (v2) .. controls (-0.90,-0.55)
               and (4.95,-0.40) .. (x.205);

\draw[arc]
  (x) .. controls (5.65,2.50)
             and (0.5,2.25) .. (v1);

\draw[arc]
  (x) .. controls (5.65,-2.65)
             and (0.5,-2.30) .. (v2);

\draw[arc]
  (x) to[out=-145,in=20,looseness=1.18] (t2);

\end{tikzpicture}
\vspace{-0.6cm}
\caption{A schematic illustration of an
$n$-vertex 3-strong split digraph $D_n$, on $n\geq 8$ vertices that is not unilaterally 2-linked. The vertex $x$ represents an arbitrary vertex of $X$.}

\label{fig+construction}
\end{figure}

\begin{claim}
$D_n$ is $3$-strong.
\end{claim}

First observe that every vertex of the tournament $H$ has both in-degree and out-degree at least $2$ in $H$. We use the elementary fact that a tournament on at most five vertices with minimum in-degree and minimum out-degree at least $1$ is strong. Indeed, in a non-strong tournament the initial and final strong components are distinct. Neither can be a singleton under these degree assumptions; as nontrivial strong tournaments have at least three vertices, there would be at least six vertices.

It follows that $H$ is $2$-strong: deletion of any one vertex leaves a tournament on five vertices with positive in-degree and out-degree at every remaining vertex. Similarly, after deleting two vertices of $H$, the resulting four-vertex tournament fails to be strong precisely when some remaining vertex has in-degree or out-degree zero. By \eqref{eq:T1}--\eqref{eq:T2}, this occurs for exactly the five pairs
\begin{equation}\label{eq:exceptions}
\mathcal{F}=\bigl\{\{v_1,t_2\},\{v_1,t_1\},\{v_3,t_2\},\{v_3,v_4\},\{v_3,t_1\}\bigr\}.
\end{equation}
For completeness, the exceptional pairs are precisely the in- or out-neighbourhoods of size two in $H$: $N_H^-(t_1)=N_H^+(t_2)=\{v_3,v_4\}$, $N_H^-(v_1)=\{t_1,v_3\}$, $N_H^+(v_2)=\{t_2,v_3\}$, $N_H^+(v_3)=\{t_1,v_1\}$, and $N_H^-(v_4)=\{t_2,v_1\}$.

Let $D_0=D_n[T\cup\{s_1,s_2\}]$, the same eight-vertex digraph for every $m$. For each $S\in\mathcal F$, the following table exhibits a directed cycle in $D_0-S$. The cycle is spanning unless a vertex appears in the last column; such a remaining vertex has an in-neighbour and an out-neighbour on the displayed cycle, and hence $D_0-S$ is strong.
\begin{table}[htbp]
\centering
\small
\renewcommand{\arraystretch}{1.2}
\begin{tabular}{@{}clc@{}}
\toprule
$S$ & Directed cycle in $D_0-S$ & Omitted vertex \\
\midrule
$\{v_1,t_2\}$ &
$v_4\to v_2\to v_3\to t_1\to s_2\to v_4$ &
$s_1$ \\

$\{v_1,t_1\}$ &
$t_2\to v_4\to v_2\to v_3\to s_1\to t_2$ &
$s_2$ \\

$\{v_3,t_2\}$ &
$t_1\to v_1\to v_4\to v_2\to s_2\to t_1$ &
$s_1$ \\

$\{v_3,v_4\}$ &
$t_1\to t_2\to s_1\to v_1\to v_2\to s_2\to t_1$ &
none \\

$\{v_3,t_1\}$ &
$v_2\to t_2\to v_4\to s_1\to v_1\to v_2$ &
$s_2$ \\
\bottomrule
\end{tabular}
\end{table}
Consequently, $D_0-S$ is strong whenever $S\subseteq T$ and $|S|\le 2$: either $H-S$ is strong or $S\in\mathcal F$. If $S$ contains at least one vertex outside $T$, then $|S\cap T|\le 1$, so $H-S$ is strong. In this case each remaining vertex of $\{s_1,s_2\}$ has at least one in-neighbour and one out-neighbour in $H-S$, since it originally has three of each. Thus $D_0-S$ is also strong.

Now let $S\subseteq V(D_n)$ with $|S|\le2$. The subdigraph induced by $V(D_0)\setminus S$ is strong by the preceding paragraph. By \eqref{eq:extra}, every $x\in X\setminus S$ has three
in-neighbours and three out-neighbours in $T$, so at least one of each survives in $D_0-S$. Consequently, all vertices of $D_n-S$ belong to one strong component. This proves that $D_n$ is $3$-strong. Moreover, $d^+_{D_n}(s_1)=3$, and therefore deletion of its three out-neighbours destroys strong connectivity. Hence $\kappa(D_n)=3$.

\begin{claim}
    $D_n$ is not unilaterally $2$-linked.
\end{claim}

Consider the terminal pairs $\{s_1,t_1\}$ and $\{s_2,t_2\}$. Suppose, to the contrary, that there are vertex-disjoint directed paths $P,Q$ joining these two pairs, respectively, with either orientation allowed.

The path $P$ is contained in $D_n-\{s_2,t_2\}$. In this digraph both $s_1$ and $t_1$ have all their out-neighbours in $U\cup X$, while all their in-neighbours lie in $W$. Thus $P$ must reach $W$ after leaving its first terminal. The only arcs entering $W$ from $\{s_1,t_1\}\cup U\cup X$ are
\begin{equation}\label{eq:forward}
v_1v_4\;\text{and}\; v_2v_3.
\end{equation}
Therefore $P$ contains at least one of these two arcs, regardless of its orientation.

Similarly, $Q$ is contained in $D_n-\{s_1,t_1\}$. In this digraph both $s_2$ and $t_2$ have all their out-neighbours in $W$, while all their in-neighbours lie in $U\cup X$. Since no vertex of $W$ sends an arc to $X$, $Q$ must contain at least one of the two arcs
\begin{equation}\label{eq:backward}
v_3v_1\;\text{and}\; v_4v_2.
\end{equation}
But every arc in \eqref{eq:forward} shares an endpoint with every arc in \eqref{eq:backward}. Hence $P$ and $Q$ share a vertex, contradicting their assumed disjointness. The digraph $D_n$ is therefore not unilaterally $2$-linked. This completes the proof of the theorem.
\end{proof}

\section{Unilateral 2-linkages in  semicomplete multipartite digraphs}\label{sec:smdsec}

In this section, we turn to semicomplete multipartite digraphs. For the unilateral $k$-linkage problem, we can make use of the corresponding result for the $k$-linkage problem to obtain the following.

\begin{corollary}
The unilateral $k$-linkage problem is polynomial for semicomplete multipartite digraphs.
\end{corollary}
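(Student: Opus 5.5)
The plan is to reduce to the ordinary $k$-linkage problem, exactly as in the first direction of the proof of Proposition \ref{prop:samecomplexity}. By (C), Chudnovsky, Scott and Seymour \cite{chudnovskyAM270} proved that for every fixed $k$ and every fixed $d$ the $k$-linkage problem is solvable in polynomial time for $d$-path dominant digraphs. As remarked in (C), semicomplete multipartite digraphs are $2$-path dominant. Indeed, let $P$ be a minimal path with two vertices, that is, an arc $uv$. Then $u$ and $v$ lie in distinct partite sets, since they are adjacent. Every other vertex $w$ lies outside at least one of these two partite sets, and so $w$ is adjacent to $u$ or to $v$. I will state this check explicitly in one line.

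Given an instance $(D,s_1,\ldots,s_k,t_1,\ldots,t_k)$ with $D$ semicomplete multipartite, enumerate all $2^k$ orientation vectors $\sigma\in\{0,1\}^k$. For each $\sigma$, form the ordered sets $S_\sigma$ and $T_\sigma$: put $(s_i,t_i)$ into position $i$ if $\sigma_i=0$, and $(t_i,s_i)$ if $\sigma_i=1$. Then run the polynomial $k$-linkage algorithm on $(D,S_\sigma,T_\sigma)$. Nothing about the digraph changes, so every call stays within the class of $2$-path dominant digraphs. Output ``yes'' if and only if some call succeeds. Correctness is immediate from the definition: a unilateral linkage determines a direction for each $P_i$, hence a vector $\sigma$ for which the paths form an ordinary $k$-linkage, and conversely.

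The running time is $2^k$ times a polynomial, which is polynomial for fixed $k$. I do not expect any real obstacle here. The only point needing care is to confirm that the Chudnovsky--Scott--Seymour result is stated for fixed $k$ on the whole class of $d$-path dominant digraphs, with arbitrary (not necessarily distinct-from-structure) terminals. It is; the class is closed under the trivial relabelling, which is all we use.
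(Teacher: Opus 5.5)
Your proposal is correct and matches the paper's proof: both invoke result (C) of Chudnovsky, Scott and Seymour to solve the ordinary $k$-linkage problem on semicomplete multipartite digraphs, and then try all $2^k$ orientation choices, which is a constant number of polynomial calls for fixed $k$. Your one-line check that semicomplete multipartite digraphs are $2$-path dominant (an arc $uv$ joins two distinct partite sets, so every other vertex is adjacent to $u$ or $v$) is a small addition; the paper only asserts this fact in the introduction.
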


\begin{proof}
By (C), the $k$-linkage problem can be solved in polynomial time for semicomplete multipartite digraphs.
Let $\{s_1,t_1\},\ldots,\{s_k,t_k\}$ be the given terminal pairs.
For each $i\in[k]$, a unilateral linkage may use either an $(s_i,t_i)$-path or a $(t_i,s_i)$-path. Thus, there are exactly $2^k$ possible choices of directions for the $k$ terminal pairs.

For each of these $2^k$ choices, we solve the corresponding ordinary $k$-linkage problem. The original unilateral $k$-linkage problem has
a solution if and only if at least one of these $2^k$ ordinary $k$-linkage problems has a solution. Since $k$ is fixed, $2^k$ is constant. Hence the unilateral $k$-linkage problem is polynomial time solvable.
\end{proof}

For $6$-strong semicomplete multipartite digraphs, Chen et al. \cite{chen2linksplit} obtained the following result.

\begin{proposition}\cite{chen2linksplit}\label{6-strong-2-linked}
Every 6-strong semicomplete multipartite digraph is 2-linked.
\end{proposition}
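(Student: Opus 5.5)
The plan is to use the same ``shortest path plus separation'' strategy that underlies the proofs for semicomplete digraphs. The first step uses the $2$-path domination property (every vertex outside a minimal path with two vertices is adjacent to one of them) to bound the length of a shortest connecting path. The second step rebuilds a short path whenever this bound fails.

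Let $D$ be a $6$-strong semicomplete multipartite digraph and let $s_1,t_1,s_2,t_2$ be distinct. Suppose, for a contradiction, that
\[
\text{$D$ has no pair of disjoint $(s_1,t_1)$- and $(s_2,t_2)$-paths.}
\]
Since $D-\{s_2,t_2\}$ is $4$-strong, it contains an $(s_1,t_1)$-path. Choose a shortest one, $P=v_0v_1\cdots v_k$, so $P$ is minimal. If $D-V(P)$ contains an $(s_2,t_2)$-path we are done. Hence $V(P)\setminus\{s_1,t_1\}$, together with $s_1,t_1$, separates $s_2$ from $t_2$, and $6$-strong connectivity forces $k-1\ge 5$, i.e.\ $k\ge 6$.

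Now use the structure of semicomplete multipartite digraphs. Since $P$ is minimal, any two vertices $v_i,v_j$ with $j\ge i+2$ are either in the same partite set or joined only by the arc $v_j\to v_i$. Thus $P$ is a long path whose non-consecutive pairs are ``backward or non-adjacent''. Symmetrically, choose a shortest $(s_2,t_2)$-path $Q$ in $D-\{s_1,t_1\}$; it also has at least $6$ arcs. By Menger, take six internally disjoint $(s_2,t_2)$-paths. At most two of them use $s_1$ or $t_1$, so at least four avoid $\{s_1,t_1\}$, and each of these must meet $\{v_1,\dots,v_{k-1}\}$.

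The core step is a rerouting argument. Let $x$ be the first vertex of $P$ met by one of these $(s_2,t_2)$-paths $R$, and let $y$ be the last vertex of $P$ met by another such path $R'$, with $x=v_i$ and $y=v_j$. Then the subpath $R[s_2,x]$, followed by a short detour, followed by $R'[y,t_2]$, should avoid a suitable replacement for $P$. There are two cases.
\begin{itemize}
\item If $i\ge j+2$, then $v_i$ and $v_j$ are adjacent (unless they lie in the same partite set) with $v_i\to v_j$. So $R[s_2,v_i]\,v_iv_j\,R'[v_j,t_2]$ is an $(s_2,t_2)$-path, and we must show it misses some $(s_1,t_1)$-path built from $P$ and the other disjoint $(s_1,t_1)$-paths.
\item If $i<j$, we look for a backward arc of $P$ that jumps over the segment $v_i\cdots v_j$. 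Such an arc gives a shortcut $(s_1,t_1)$-path avoiding $v_i,\dots,v_j$, which contradicts the minimality of $P$ unless the shortcut vertices lie in a common partite set.
\end{itemize}
Having four (indeed up to six) paths meet $P$ gives enough choices of $x,y$ to land in one of these favourable configurations. Moreover, the vertices $v_0,v_2,v_4,\dots$ cannot all lie in one partite set, because then their successors $v_1,v_3,\dots$ would give shortcuts.

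I expect the main obstacle to be precisely the partite-set degeneracy: two vertices of $P$ (or a vertex of $R$ and a vertex of $P$) may be non-adjacent because they share a partite set, which blocks the shortcut. To handle this, I would first apply the argument with the local linkage Theorem~\ref{thm:local} in mind. I would reduce to the case where $D[V(P)\cup V(Q)]$ behaves like a semicomplete digraph up to same-part pairs, and then treat same-part pairs by replacing a vertex $v_i$ with an adjacent neighbour $v_{i\pm1}$, which lies in a different part. If that bookkeeping becomes too messy, the fallback is to follow the split-digraph proof of Theorem~\ref{thm:mainsplit}: take four minimal paths for each pair, locate the first and last intersections $z_i,w_i$, and derive contradictions from the forced orientation of arcs between successors and predecessors of terminals lying in different partite sets.
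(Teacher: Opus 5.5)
\paragraph{Verdict.} There is a genuine gap. The paper does not prove this proposition; it is quoted from \cite{chen2linksplit}. Your argument therefore has to stand on its own, and as written it is an outline with the decisive step missing.

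\paragraph{The central step is not carried out.} After forming the $(s_2,t_2)$-path $R[s_2,v_i]\,v_iv_j\,R'[v_j,t_2]$, you say one ``must show it misses some $(s_1,t_1)$-path built from $P$ and the other disjoint $(s_1,t_1)$-paths''. But this path still contains $v_i,v_j\in V(P)$. Moreover, the segments $R[s_2,v_i]$ and $R'[v_j,t_2]$ may meet every other internally disjoint $(s_1,t_1)$-path, and nothing in the proposal controls these intersections.

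\paragraph{The case $i<j$ does not work.} A backward arc $v_bv_a$ with $a<i\le j<b$ cannot be used by an $(s_1,t_1)$-path to skip $v_i\cdots v_j$. A forward arc over that segment is already excluded because $P$ is shortest, and if the two vertices share a partite set there is no arc at all. So in the case $i<j$, which is the generic one, any $(s_1,t_1)$-path avoiding your new $(s_2,t_2)$-path must leave $P$, and you give no mechanism for finding one. In addition:
\begin{itemize}
\item The claim that four intersecting paths give ``enough choices of $x,y$'' to reach a favourable configuration is unsupported.
\item The case $i=j+1$ is not treated.
\item The case where $v_i,v_j$ lie in a common partite set is not treated.
\item The partite-set degeneracy, which you correctly identify as the main obstacle, is deferred to unspecified bookkeeping or to a fallback proof.
\end{itemize}

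\paragraph{Errors in auxiliary claims.}
\begin{itemize}
\item The separator you find is all of $V(P)$, including $s_1$ and $t_1$. So $6$-strong connectivity gives only $|V(P)|=k+1\ge 6$, i.e.\ $k\ge 5$, not $k\ge 6$.
\item Your justification that $v_0,v_2,v_4,\dots$ cannot lie in one partite set is wrong. In a semicomplete bipartite digraph every path alternates between the two parts, so the even-indexed vertices of any shortest path share a part, and $v_1,v_3,\dots$ yield no shortcut.
\item The announced use of $2$-path domination to bound the length of a shortest path never appears in the argument.
\end{itemize}

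\paragraph{What is needed.} You need a local-linkage lemma for semicomplete multipartite digraphs. Deleting the other terminal pair from a $6$-strong $D$ leaves a $4$-strong digraph, hence four internally disjoint $(s_i,t_i)$-paths for each $i$. One must show that two such families, chosen minimal, force a linkage. The natural tools are those of Section~\ref{sec:split2linksec}: first and last intersection vertices, and the forced orientations between successors and predecessors of the terminals. The new ingredient is to handle explicitly the pairs that are non-adjacent because they lie in the same partite set.
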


Since every $2$-linked digraph is unilaterally $2$-linked, Proposition~\ref{6-strong-2-linked} also gives a sufficient condition for unilateral $2$-linkage. We show in Theorem~\ref{the3.6} that every
$5$-strong semicomplete multipartite digraph is unilaterally $2$-linked, and this strong-connectivity condition is best possible. We first give an example showing that $4$-strong does not suffice.

\begin{proposition}\label{proposition+counterexample}
    There exists a 4-strong semicomplete multipartite digraph which is not unilaterally 2-linked.
\end{proposition}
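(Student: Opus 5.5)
I will build an explicit example by adapting the construction in Theorem~\ref{thm:main}. That digraph is a split digraph, but it is not semicomplete split, because $s_1,s_2$ and the vertices of $X$ are non-adjacent to some vertices of $T$. I plan to keep the same obstruction mechanism. There will be two \emph{forward} arcs $v_1v_4$ and $v_2v_3$ that every $\{s_1,t_1\}$-path must use, and two \emph{backward} arcs $v_3v_1$ and $v_4v_2$ that every $\{s_2,t_2\}$-path must use. Every forward arc shares an endpoint with every backward arc, so the two paths must meet. The difference is that all missing adjacencies are now forbidden. Every pair of vertices in different partite sets must be joined by an arc. So I will let the terminals sit in a few partite sets and all other vertices be singletons (or small partite sets), and orient every required arc so as to preserve the obstruction.

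Concretely, I take partite sets $\{s_1,s_2\}$ and $\{t_1,t_2\}$. I take $U=\{u_1,u_2\}$ and $W=\{w_1,w_2\}$, possibly as independent partite sets, to free some adjacencies. I also add extra vertices $y$ to raise the connectivity to $4$. The orientation principle is as follows.
\begin{itemize}
\item In $D-\{s_2,t_2\}$, both $s_1$ and $t_1$ send arcs only into a set $A$ and receive arcs only from a set $B$, and the only arcs from $A$ to $B$ are the designated forward arcs.
\item In $D-\{s_1,t_1\}$, both $s_2$ and $t_2$ send arcs only into a set $B'$ and receive arcs only from $A'$, and the only arcs from $B'$ to $A'$ are the designated backward arcs.
\end{itemize}
In the split example, $A=U\cup X$, $B=W$, $B'=W$ and $A'=U\cup X$. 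A pair $\{s_1,t_2\}$ or $\{s_2,t_1\}$ that is now forced to be adjacent must be oriented both ways or in a harmless way; since they lie in the same class here they need not be adjacent at all. Each extra vertex $y$ must dominate, or be dominated by, every vertex outside its class. Making $A\Rightarrow y$ and $y\Rightarrow A$ would create new $A$-to-$B$ routes, so each $y$ must be placed with $A\mapsto y\mapsto A$ in its arcs toward $A$ and $y\mapsto B$, or with a symmetric choice. I expect to need $2$-cycles (arcs in both directions are allowed in semicomplete multipartite digraphs) between $y$ and vertices of $A\cup A'$. After that, I check that every arc entering $B$ from $\{s_1,t_1\}\cup A$ is still one of the forward arcs.

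The disjointness proof is then exactly as in Theorem~\ref{thm:main}: the path through $s_1,t_1$, in either direction, must leave its start into $A$ and end through $B$, so it uses a forward arc. The same argument applies to the other pair with the backward arcs, and the conflict at the shared endpoints gives the contradiction. The remaining, and hardest, step is verifying $4$-strong connectivity, i.e.\ that $D-S$ is strong for all $|S|\le 3$. I would attempt this as in the preceding theorem. First I would show that the core on $T\cup U\cup W$ is $2$- or $3$-strong using degree arguments. Then I would handle the exceptional separating sets with an explicit table of cycles. Finally I would note that each outer vertex keeps an in- and out-neighbour in the strong remainder because it has at least four of each. If the degrees of $s_i,t_i$ into $A,B$ are too small for $4$-strong connectivity, I would enlarge $U$ and $W$ to three vertices each. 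This adds further forward and backward arcs, arranged as a bipartite pattern in which every forward arc meets every backward arc (for instance all of them incident to a common pair of vertices). Balancing this intersection property against $4$-connectivity is the main obstacle, and I would search small cases (around ten vertices) by hand first.
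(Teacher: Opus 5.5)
Your obstruction mechanism is the same one the paper uses: two forward arcs forced on any path joining $s_1,t_1$, two backward arcs forced on any path joining $s_2,t_2$, and every forward arc sharing an endpoint with every backward arc. The proposal fails, however, at its first concrete choice.

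You put the terminals in the partite sets $\{s_1,s_2\}$ and $\{t_1,t_2\}$. In a semicomplete multipartite digraph, vertices in different partite sets are adjacent, so $s_1$ and $t_1$ are joined by an arc. That arc is already a path between them in one direction or the other. If $D$ is $4$-strong, then $D-\{s_1,t_1\}$ is strong and contains an $(s_2,t_2)$-path. Together with that arc, this is a unilateral linkage, so no orientation of your digraph can be a counterexample for the pairs $\{s_1,t_1\},\{s_2,t_2\}$.

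Your first bullet, that $s_1$ and $t_1$ send arcs only into $A$ and receive arcs only from $B$, is already incompatible with this partition. Your remark that $s_1,t_2$ and $s_2,t_1$ ``lie in the same class'' contradicts it too. Whichever partition you meant, $s_1$ and $t_1$ end up in different classes.

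The same argument shows that in any $3$-strong counterexample each pair $s_i,t_i$ must be nonadjacent, so it must lie inside a single partite class. The fix is to make $\{s_1,t_1\}$ and $\{s_2,t_2\}$ partite classes. The arcs between these two classes can then go in both directions: the path for one pair avoids the other pair, so these arcs never help a linkage, but they do help connectivity.

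The second gap is that you never actually exhibit a digraph. The orientation of the extra vertices is left open, and ``$A\mapsto y\mapsto A$'' is self-contradictory as written. The $4$-strong verification, which you yourself call the main obstacle, is only a plan. For an existence statement, the explicit example and its verification are the whole proof.

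With the corrected classes the paper needs only eight vertices. The partite classes are $\{s_1,t_1\}$, $\{s_2,t_2\}$, $\{x_1\}$, $\{x_2\}$, $\{y_1\}$, $\{y_2\}$, with these arcs:
\begin{itemize}
\item 2-cycles between the two terminal classes;
\item $\{s_1,t_1\}\to\{x_1,x_2\}\to\{s_2,t_2\}\to\{y_1,y_2\}\to\{s_1,t_1\}$;
\item 2-cycles $x_1\leftrightarrow x_2$ and $y_1\leftrightarrow y_2$;
\item the 4-cycle $x_1\to y_1\to x_2\to y_2\to x_1$.
\end{itemize}
The forward arcs are $x_1y_1,x_2y_2$ and the backward arcs are $y_1x_2,y_2x_1$. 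Each forward arc meets each backward arc.

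$4$-strong connectivity follows from a short case split on whether the deleted set $S$, with $|S|\le 3$, contains a whole terminal class. If it does not, the surviving terminals lie in one strong component via the 2-cycles, and each surviving $x_i$ or $y_j$ has an in- and out-neighbour among them. If it does, at most one further vertex is deleted. The remaining vertices of $\{x_1,x_2\}$ and of $\{y_1,y_2\}$ stay strong, and one of the two vertex-disjoint arcs needed to close the cycle through the other terminal class survives.
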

\begin{proof}

Let
\[
A=\{s_1,t_1\},
B=\{s_2,t_2\},
X=\{x_1,x_2\}\;\text{and}\;
Y=\{y_1,y_2\}.
\]
Now we construct a digraph $D$ with $V(D)=A\cup B\cup X\cup Y$, where the partite classes of $D$ are $A, B, \{x_1\}, \{x_2\}, \{y_1\}, \{y_2\}$. The arc set of $D$
 consists exactly of the following arcs (see Figure \ref{Counterexample-4-strong-not-unilaterally-2-linked}):
\[
A\leftrightarrow B,
A\to X, X\to B, B\to Y, Y\to A,
x_1\leftrightarrow x_2,
y_1\leftrightarrow y_2
\;\text{and}\;
x_1\to y_1\to x_2\to y_2\to x_1.
\] 
It follows from the construction that $D$ is a semicomplete multipartite digraph.

We first show that $D$ is $4$-strong. Let
$S\subseteq V(D)$ with $|S|\leq 3$. Suppose first that both $A\setminus S$ and $B\setminus S$ are nonempty.
Since $A\leftrightarrow B$, all vertices in
$(A\cup B)\setminus S$ lie in the same strong component. Moreover, for
every $x\in X\setminus S$,
$A\setminus S\to x\to B\setminus S$,
and for every $y\in Y\setminus S$,
$B\setminus S\to y\to A\setminus S$.
Hence every vertex of $D-S$ belongs to the same strong component, and so $D-S$ is strong.

Suppose next that $A\subseteq S$. Since $|A|=2$ and $|S|\le3$, at most one vertex is deleted from $B\cup X\cup Y$. In particular,
$B\setminus S\ne\varnothing$,
$X\setminus S\ne\varnothing$ and
$Y\setminus S\ne\varnothing$.
The subdigraphs induced by $X\setminus S$ and $Y\setminus S$ are strong, and
$X\setminus S\to B\setminus S\to Y\setminus S$.
Furthermore, the two arcs
$y_1x_2, y_2x_1$
are vertex-disjoint, and hence at least one of these two arcs remains in
$D-S$. Thus there is also a directed path from $Y\setminus S$ to $X\setminus S$.
Consequently, $D-S$ is strong. The case $B\subseteq S$ is analogous. Since $|A\cup B|=4>|S|$, these cases cover all possibilities.
Therefore $D$ is $4$-strong.

We next show that $D$ is not unilaterally $2$-linked.
Consider the two terminal pairs
$\{s_1,t_1\}$ and $\{s_2,t_2\}$.
On the contrary, suppose that there are two vertex-disjoint
directed paths $P$ and $Q$, where $P$ joins $s_1$ and $t_1$, and $Q$ joins $s_2$ and $t_2$.
Since $P$ and $Q$ are vertex-disjoint,
$P\subseteq D-B
\;\text{and}\;Q\subseteq D-A$.
In $D-B$, every directed path from one vertex of $A$ to the
other must leave $A$ through $X$ and return to $A$ through $Y$.
Hence $P$ must use an arc from $X$ to $Y$. The only such arcs are $x_1y_1$ and $x_2y_2$.
Similarly, in $D-A$, every directed path from one vertex of $B$ to the other must use an arc from $Y$ to $X$. The only such arcs are $y_1x_2$ and $y_2x_1$.
However, each of the two $X\to Y$ arcs has a common endpoint with
each of the two $Y\to X$ arcs. Therefore $P$ and $Q$ cannot be
vertex-disjoint, a contradiction.
Hence $D$ is not unilaterally $2$-linked.
\end{proof}

\begin{figure}[htbp]
\centering
\resizebox{0.71\linewidth}{!}{%
\begin{tikzpicture}[
  vertex/.style={circle,draw=black,fill=white,text=black,line width=.85pt,minimum size=10.5mm,inner sep=0pt,outer sep=0pt,font=\Large},
  box/.style={rounded corners=2.5pt,draw=black,fill=none,text=black,line width=.8pt,inner xsep=7pt,inner ysep=4pt,font=\Large},
  arc/.style={-{Latex[length=2.4mm,width=1.55mm]},draw=black,line width=.78pt,shorten <=.2pt,shorten >=.7pt},
  redarc/.style={arc,draw=black,line width=1.4pt},
  line cap=round,line join=round]

\node[vertex] (s1) at (-5,1.5) {$s_1$};
\node[vertex] (t1) at (-5,-1.5) {$t_1$};
\node[vertex] (s2) at (5,1.5) {$s_2$};
\node[vertex] (t2) at (5,-1.5) {$t_2$};
\node[vertex] (x1) at (-1.5,4) {$x_1$};
\node[vertex] (x2) at (1.5,4) {$x_2$};
\node[vertex] (y1) at (-1.5,-4) {$y_1$};
\node[vertex] (y2) at (1.5,-4) {$y_2$};
\draw[arc] (s1.19.42) .. controls (-1.79,2.63) and (1.67,2.17) .. (s2.168.7);
\draw[arc] (s2.202.7) .. controls (1.86,0.19) and (-1.71,0.64) .. (s1.345.42);
\draw[arc] (t1.340.2) .. controls (-1.8,-2.65) and (2.01,-3.13) .. (t2.208.55);
\draw[arc] (t2.157.55) .. controls (1.86,-0.2) and (-1.7,-0.67) .. (t1.14.2);
\draw[arc] (s1.2.42) .. controls (-1.45,1.65) and (2.26,0.76) .. (t2.140.55);
\draw[arc] (t2.174.55) .. controls (1.47,-1.16) and (-2.65,-1.16) .. (s1.311.42);
\draw[arc] (t1.357.2) .. controls (-1.45,-1.67) and (2.27,-0.77) .. (s2.219.7);
\draw[arc] (s2.185.7) .. controls (1.47,1.15) and (-1.96,0.34) .. (t1.31.2);
\draw[arc] (s1.53.42) .. controls (-4.13,2.67) and (-2.84,3.42) .. (x1.203.54);
\draw[arc] (s1.36.42) .. controls (-3.09,2.91) and (-0.76,3.31) .. (x2.197.04);
\draw[arc] (t1.65.2) .. controls (-4.07,0.51) and (-2.28,1.92) .. (x1.249.53);
\draw[arc] (t1.48.2) .. controls (-3.07,0.66) and (-0.47,1.87) .. (x2.227.24);
\draw[arc] (x1.347.36) .. controls (0,3.66) and (3.92,2.1) .. (s2.134.7);
\draw[arc] (x1.330.36) .. controls (1.02,2.57) and (4.18,1.28) .. (t2.106.55);
\draw[arc] (x2.336.46) .. controls (2.84,3.42) and (4.13,2.67) .. (s2.126.58);
\draw[arc] (x2.292.97) .. controls (2.36,1.96) and (3.77,0.35) .. (t2.123.55);
\draw[arc] (s2.236.7) .. controls (3.41,-0.92) and (1.23,-3.03) .. (y1.19.64);
\draw[arc] (s2.253.7) .. controls (4.38,-0.63) and (3.15,-2.52) .. (y2.42.03);
\draw[arc] (t2.191.55) .. controls (2.68,-1.97) and (0.4,-2.59) .. (y1.36.64);
\draw[arc] (t2.233.42) .. controls (4.13,-2.67) and (2.84,-3.42) .. (y2.23.54);
\draw[arc] (y1.131.47) .. controls (-2.97,-2.34) and (-4.12,-0.53) .. (s1.293.47);
\draw[arc] (y1.162.46) .. controls (-2.89,-3.56) and (-4.14,-2.68) .. (t1.306.2);
\draw[arc] (y2.129.16) .. controls (-0.33,-1.76) and (-2.53,-0.02) .. (s1.328.42);
\draw[arc] (y2.150.96) .. controls (-0.57,-2.85) and (-3.1,-2.92) .. (t1.323.2);
\draw[arc] (x1.45) .. controls (-0.52,4.98) and (0.52,4.98) .. (x2.135);
\draw[arc] (x2.162) .. controls (0.53,4.32) and (-0.53,4.32) .. (x1.18);
\draw[arc] (y1.342) .. controls (-0.53,-4.32) and (0.53,-4.32) .. (y2.198);
\draw[arc] (y2.225) .. controls (0.52,-4.98) and (-0.52,-4.98) .. (y1.315);
\draw[redarc] (x1.267.28) .. controls (-1.63,1.28) and (-1.5,-1.28) .. (y1.90);
\draw[redarc] (y1.60.44) .. controls (-0.07,-1.47) and (0.78,1.18) .. (x2.255.72);
\draw[redarc] (x2.272.72) .. controls (1.63,1.28) and (1.5,-1.28) .. (y2.90);
\draw[redarc] (y2.112.16) .. controls (0.4,-1.31) and (-0.78,1.18) .. (x1.284.28);

\node[box] at (-7.75,0) {$A=\{s_1,t_1\}$};
\node[box] at (7.75,0) {$B=\{s_2,t_2\}$};
\node[box] at (0,5.65) {$X=\{x_1,x_2\}$};
\node[box] at (0,-5.65) {$Y=\{y_1,y_2\}$};
\end{tikzpicture}%
}
\caption{A $4$-strong semicomplete multipartite digraph that is not unilaterally $2$-linked.}
\label{Counterexample-4-strong-not-unilaterally-2-linked}
\end{figure}

The digraph  in Proposition \ref{proposition+counterexample} has only eight vertices. So far, we have not found any counterexamples with more vertices. This leads us to the following problem.

\begin{problem}
Is every 4-strong semicomplete multipartite digraph on at least 9 vertices unilaterally 2-linked?

\end{problem}

Proposition \ref{proposition+counterexample} shows that the 5-strong connectivity condition in the following theorem is best possible.

\ThmUnilateralsmd*

\begin{proof}
Let $D$ be a $5$-strong semicomplete multipartite digraph, and let $s_1,t_1,s_2,t_2$ be four distinct vertices of $D$. For each $i\in[2]$, the digraph $D-\{s_{3-i},t_{3-i}\}$ is $3$-strong. Hence, by Menger's theorem, $D-\{s_{3-i},t_{3-i}\}$ contains three internally disjoint $(s_i,t_i)$-paths for each $i\in[2]$.

On the contrary, suppose that
$(D,\{s_1,t_1\},\{s_2,t_2\})$ is not good. We first observe that, for each $i\in[2]$, the digraph
$D-\{s_{3-i},t_{3-i}\}$ contains neither an $(s_i,t_i)$-path nor a $(t_i,s_i)$-path of length at most three. Otherwise, let $R$ be such a path. Since $R$ has at most two internal vertices, and the three $(s_{3-i},t_{3-i})$-paths are internally disjoint, at least one of them is vertex-disjoint from $R$. This contradicts the assumption that $(D,\{s_1,t_1\},\{s_2,t_2\})$ is not good.
Hence $s_1$ and $t_1$ are nonadjacent, and therefore they belong to the same partite class, say $A$. Similarly, $s_2$ and $t_2$ belong to the same partite class, say $B$, where possibly $A=B$.

For every vertex $v\in V(D)\setminus(A\cup\{s_2,t_2\})$, the vertex $v$ is adjacent to both $s_1$ and $t_1$. Since neither
$s_1vt_1$ nor $t_1vs_1$ is a directed path, either
$\{s_1,t_1\}\to v$ with no reverse arc, or
$v\to\{s_1,t_1\}$ with no reverse arc.
Choose three internally disjoint $(s_1,t_1)$-paths
$P_1,P_2,P_3$ in $D-\{s_2,t_2\}$ such that
$\sum_{i=1}^3 |P_i|$ is minimum.
Let $v$ be an internal vertex
of $P_i$ with $v\notin A$. If $v$ is dominated by both $s_1$ and $t_1$, then $v$ must be the first internal vertex of $P_i$; otherwise the arc $s_1v$ yields a shorter $(s_1,t_1)$-path.
Similarly, if $v$ dominates both $s_1$ and $t_1$, then $v$ must be the last internal vertex of $P_i$. Hence every internal vertex of $P_i$, except the first and the last, lies in $A$. Since $A$ is independent, there is at most one such middle
vertex. Moreover, the first and the last internal vertices of $P_i$ do not belong to $A$. Since $P_i$ has length at least four, it follows that \[\text{$P_i=s_1x_im_iy_it_1$, where $m_i\in A$ and $x_i,y_i\notin A$.}\] The internal vertices
of $P_1,P_2,P_3$ are pairwise distinct.

Applying the same argument to $s_2$ and $t_2$, choose three internally disjoint $(s_2,t_2)$-paths $Q_1,Q_2,Q_3$ in
$D-\{s_1,t_1\}$ such that $\sum_{j=1}^3 |Q_j|$ is minimum. Then each $Q_j$ has length four, and its unique middle internal vertex belongs to $B$.

Since $(D,\{s_1,t_1\},\{s_2,t_2\})$ is not good, we have $V(P_i)\cap V(Q_j)\neq\varnothing$ for all  $i,j\in[3]$. As $P_i\subseteq D-\{s_2,t_2\}$ and $Q_j\subseteq D-\{s_1,t_1\}$, every such intersection is an internal vertex of both paths. Since each $P_i$ has exactly three
internal vertices and $Q_1,Q_2,Q_3$ are internally disjoint, we have $|V(P_i)\cap V(Q_j)|=1$ for all $i,j\in[3]$. Let
\[
X=\{x_1,x_2,x_3\}, 
M=\{m_1,m_2,m_3\}\;\text{and}\;
Y=\{y_1,y_2,y_3\}.
\]
Hence $\bigcup_{j=1}^3\bigl(V(Q_j)\setminus\{s_2,t_2\}\bigr)
=X\cup M\cup Y$.

We next show that no $Q_j$ contains an arc of the form
$x_im_h$ or $m_hy_i$. Suppose that some $Q_j$ contains
$x_im_h$. Then $i\ne h$, and
$s_1x_im_hy_ht_1$
is an $(s_1,t_1)$-path. The vertices $x_i$ and $m_h$ lie on
$Q_j$, while $y_h$ lies on at most one of the paths
$Q_\ell$, $\ell\in[3]\setminus\{j\}$. Hence $s_1x_im_hy_ht_1$ is vertex-disjoint from one of
$Q_1,Q_2,Q_3$, a contradiction. Similarly, if some $Q_j$
contains $m_hy_i$, then
$s_1x_hm_hy_it_1$
is disjoint from one of the paths $Q_1,Q_2,Q_3$, again a
contradiction.

We claim that $A=B$. Otherwise, suppose that $A\ne B$. The middle internal vertex of each
$Q_j$ belongs to $B$, and hence does not belong to $M$. We claim
that no $Q_j$ contains two vertices of $M$. Otherwise, these two
vertices are the first and last internal vertices of $Q_j$, and
its middle internal vertex lies in $X\cup Y$. If the middle
vertex lies in $X$, then $Q_j$ contains an arc of the form
$x_im_h$; if it lies in $Y$, then $Q_j$ contains an arc of the
form $m_hy_i$. Both are impossible.

Thus each $Q_j$ contains at most one vertex of $M$. Since all three vertices of $M$ occur on $Q_1,Q_2,Q_3$, each $Q_j$ contains exactly one vertex of $M$. If the unique vertex of $M$ on $Q_j$ is the first internal vertex, then the second internal vertex belongs to $X$ since $Q_j$ contains no arc of the form $m_h y_i$. The third internal vertex also belongs to $X$, since
otherwise $Q_j$ contains an arc $x_iy_h$, and
$s_1x_iy_ht_1$ is a forbidden path of length three. Hence such a
$Q_j$ has internal-vertex type $(M,X,X)$. Similarly, if the unique vertex of $M$ on $Q_j$ is the last internal vertex, then $Q_j$ has type
$(Y,Y,M)$.

Among $Q_1,Q_2,Q_3$, at least two have the same type.
Two paths of type $(M,X,X)$ require four distinct vertices of
$X$, while two paths of type $(Y,Y,M)$ require four distinct
vertices of $Y$. Both are impossible since $|X|=|Y|=3$. Thus, $A=B$.

The middle internal vertex of each
$Q_j$ belongs to $A$, and hence belongs to $M$. Its first and
last internal vertices therefore lie in $X\cup Y$. Since no $Q_j$ contains an arc of the form $x_im_h$ or
$m_hy_i$, the first internal vertex of $Q_j$ cannot lie in $X$
and the last internal vertex cannot lie in $Y$. Thus every
$Q_j$ has the form
$Q_j=s_2y_im_hx_kt_2$.
The indices $i,h,k$ are pairwise distinct. Relabelling the
indices if necessary, we may assume that
\[
\text{$Q_1=s_2y_1m_2x_3t_2$,
$Q_2=s_2y_2m_3x_1t_2$ and
$Q_3=s_2y_3m_1x_2t_2$.}
\]

The vertices $x_1$ and $m_2$ belong to distinct partite classes
and hence are adjacent. If $x_1\to m_2$, then
$s_1x_1m_2y_2t_1$ is disjoint from $Q_3$, a contradiction.
Otherwise, $m_2\to x_1$, and
$s_2y_1m_2x_1t_2$ is disjoint from
$P_3=s_1x_3m_3y_3t_1$, again a contradiction.
Thus $(D,\{s_1,t_1\},\{s_2,t_2\})$ is good. Since
$s_1,t_1,s_2,t_2$ were chosen arbitrarily, $D$ is unilaterally $2$-linked.
\end{proof}

In view of Proposition \ref{6-strong-2-linked} and Theorem \ref{the3.6}, it is natural to ask whether 5-strong connectivity already guarantees 2-linkage.

\begin{problem}
    Is every 5-strong semicomplete multipartite digraph 2-linked?
\end{problem}

\section{Unilateral linkages in co-bipartite digraphs}\label{sec:cobiplinksec}

Recall that a digraph $D=(V,A)$ is co-bipartite if one can partition its vertex set into two sets $V_1,V_2$ so that each of the digraphs $D[V_1],D[V_2]$ are semicomplete.
Chudnovsky, Scott and Seymour \cite{chudnovskyJCT135} showed that the 2-linkage problem is polynomial for co-bipartite digraphs and hence so is the unilateral 2-linkage problem. We are not aware of any results on sufficient connectivities for co-bipartite digraphs being 2-linked.
The digraph in Figure \ref{Counterexample-4-strong-not-unilaterally-2-linked} is also co-bipartite, implying that not all 4-strong co-bipartite digraphs are unilaterally 2-linked.

\Thmunilateralcobip*

\begin{proof}
The proof reduces the problem to Theorem \ref{split digraph:theorem}, by displaying a nice connection between 2-linkages in split digraphs and unilateral 2-linkages in co-bipartite digraphs.

Let \(D\) be a \(6\)-strong co-bipartite digraph and \(V(D)=A\mathbin{\cup} B\), where $A\cap B=\emptyset$ and both $A$ and $B$ induce semicomplete digraphs. On the contrary, suppose that the pairs \((s_1,t_1)\) and \((s_2,t_2)\) have no
unilateral linkage.

Neither \(s_1\) and \(t_1\) nor \(s_2\) and \(t_2\) can be adjacent. Otherwise, suppose that
$s_1$ and $t_1$ are adjacent. Since $D-\{s_1,t_1\}$ is strong,
there exists a $(s_2,t_2)$-path in $D-\{s_1,t_1\}$.
Together with the arc between $s_1$ and $t_1$, we obtain
a unilateral linkage, a contradiction.
 Thus, we may assume that $s_1,s_2\in A$ and $t_1,t_2\in B$.

Since \(D\) is \(6\)-strong, both \(D-\{s_2,t_2\}\) and \(D-\{s_1,t_1\}\) are \(4\)-strong. By Menger's theorem, there exist four internally disjoint \((s_1,t_1)\)-paths \(P_1,\ldots,P_4\) in \(D-\{s_2,t_2\}\) and four internally disjoint \((t_2,s_2)\)-paths \(Q_1,\ldots,Q_4\) in \(D-\{s_1,t_1\}\).
Choose $P_1,\ldots,P_4$ and $Q_1,\ldots,Q_4$ so that
the total length of each set of paths is minimum. Thus, if $u$ precedes
$v$ on any of these paths and they are not consecutive,
then $uv$ is not an arc of $D$. Since the two terminal
pairs are not unilaterally linked, every $P_i$ intersects
every $Q_j$.

We first show that every \(P_i\) has the form \(A^*B^*\). Suppose that some \(P_i\) contains an arc \(xy\), where \(x\in B\) and \(y\in A\) are internal vertices. Since \(D[B]\) is semicomplete, either \(x\to t_1\) or \(t_1\to x\). Then $t_1\to x$, for otherwise we can obtain a shorter $(s_1,t_1)$-path. Similarly, we have \(y\to s_1\). Thus \(t_1xys_1\) is a \((t_1,s_1)\)-path with only two internal vertices. Since \(Q_1,\ldots,Q_4\) are internally disjoint and avoid \(s_1,t_1\), at least one of them is disjoint from \(t_1xys_1\), a contradiction. Hence every \(P_i\) has the form \(A^*B^*\). Similarly,
each \(Q_j\) has the form \(B^*A^*\).

Let
\[
A_i=(V(P_i)\cap A)\setminus\{s_1\},\;
B_i=(V(P_i)\cap B)\setminus\{t_1\},
\]
and
\[
X_j=(V(Q_j)\cap A)\setminus\{s_2\},\;
Y_j=(V(Q_j)\cap B)\setminus\{t_2\}.
\]
Assign the following types:

\begin{center}
\renewcommand{\arraystretch}{1.3}
\begin{tabular}{cccl}
\hline
Path & Type $+$ & Type $-$ & Type 0\\
\hline
$P_i$ & $s_2\to u$ for some $u\in A_i$
      & $v\to t_2$ for some $v\in B_i$ 
      &  $A_i\mapsto s_2$ \mbox{ and} $t_2\mapsto B_i$\\
$Q_j$ & $u\to s_1$ for some $u\in X_j$
      & $t_1\to v$ for some $v\in Y_j$
      & $s_1\mapsto X_j$ \mbox{ and } $Y_j\mapsto t_1$\\
\hline
\end{tabular}
\end{center}

No path can have both type $+$ and type $-$. Otherwise, suppose that
$P_i$ has both types. Then there exist $u\in A_i$
and $v\in B_i$ such that $s_2\to u$ and $v\to t_2$.
Since $P_i$ has the form $A^*B^*$, the $(s_2,t_2)$-path
$Q=s_2u\circ P_i[u,v]\circ vt_2$ is disjoint from
$P_k$ for every $k\ne i$, a contradiction. The argument for $Q_j$ is similar.

Choose one intersection vertex $w_{i,j}$  for each of the 16 pairs \(P_i,Q_j\), and form a
\(4\times4\) matrix whose entry is \(A\) or \(B\), according to whether $w_{i,j}\in A$  or $w_{i,j}\in B$.  Label its rows and columns by the corresponding path types $+$,$-$,$0$.

Using the definition of these three types, we now establish two constraints on this matrix.\\

\textbf{(i)} \emph{In any column, there is at most one \(A\)-entry in a
row of type \(-\) or \(0\), and at most one \(B\)-entry in a row of type
\(+\) or \(0\). 
Similarly, in any row, there is at most one $A$-entry in a column of type $-$ or $0$, and at most one $B$-entry in a column of type $+$ or $0$.}\\

For the first assertion, suppose that the entry in row $i$ and column $j$ is $A$, and that row $i$ has type $-$ or $0$. Since $P_i$ is not of type $+$,  we have $s_2\not\to w_{i,j}$. As $s_2,w_{i,j}\in A$ and $D[A]$ is semicomplete, we have $w_{i,j}\to s_2$. By minimality, $w_{i,j}$ must be the last internal vertex of $Q_j$; otherwise, we can obtain a shorter $(t_2,s_2)$-path by replacing $Q_j[w_{i,j},s_2]$ with the arc $w_{i,j}s_2$. Since $P_1,\ldots,P_4$ are internally disjoint, at most one of them contains this vertex. Hence column $j$ has at most one $A$-entry among the rows of type $-$ or $0$. The assertion for $B$-entries in a column follows similarly. The corresponding assertions for rows follow by the same argument, with $s_1,t_1$ in place of $s_2,t_2$.\\

\textbf{(ii)} \emph{An \(A\)-entry in a negative row forces its entire
column to contain only that \(A\)-entry. A \(B\)-entry in a positive
row forces its entire column to contain only that \(B\)-entry.
Similarly, an $A$-entry in a negative column is the only $A$-entry in its row, and a $B$-entry in a positive column is the only $B$-entry in its row.} 

For example, suppose that \(w_{i,j}\in A_i\cap X_j\), where \(P_i\) has type \(-\).
By (i), \(w_{i,j}\) is the last internal vertex of \(Q_j\). If \(Q_j\) has
another internal \(A\)-vertex, then the   predecessor \(z\) of $w_{i,j}$ on $Q_j$ is also in \(A\), and
minimality gives \(s_2\to z\). Choose \(v\in B_i\) with \(v\to t_2\).
Then $s_2\to z\to w_{i,j}\,P_i[w_{i,j},v]\to t_2$
is a directed walk using only the interior of \(P_i\) and one
additional vertex. It contains an $(s_2,t_2)$-path disjoint from another \(P_k\), a contradiction. Thus \(Q_j\) has only one internal
\(A\)-vertex. The other three assertions use the same predecessor or
successor argument.\\

We now extract the consequence of these constraints.

\begin{claim}\label{cl:1}
There is at most one row of type $0$ and at most one column of type $0$.
\end{claim}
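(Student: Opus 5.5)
Suppose two rows $i\neq i'$ both have type $0$. We will show that two type-$0$ rows, together with the structure of the columns, let us reroute a $Q_j$ around $P_i,P_{i'}$, or else contradict (i). The argument for columns is symmetric: swap the roles of $(s_1,t_1)$ and $(t_2,s_2)$, of rows and columns, and of $A$ and $B$.

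\textbf{Step 1: combinatorial restriction.} Constraint (i), applied to each of the four columns, allows at most one $A$-entry and at most one $B$-entry among the two type-$0$ rows in that column. Since every entry is $A$ or $B$, in each column the entries in rows $i$ and $i'$ are exactly one $A$ and one $B$.

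\textbf{Step 2: consequences for each column.} By the proof of (i), in column $j$ the $A$-entry coming from a type-$0$ row satisfies $w\to s_2$. Minimality then forces $w$ to be the last internal vertex of $Q_j$. Likewise the $B$-entry satisfies $t_2\to w'$, so $w'$ is the first internal vertex of $Q_j$. Hence, for every $j$, the first internal vertex of $Q_j$ lies on one of $P_i,P_{i'}$ and the last internal vertex of $Q_j$ lies on the other.

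\textbf{Step 3: counting.} There are four columns, so the four first internal vertices of $Q_1,\dots,Q_4$ are four distinct $B$-vertices on $P_i\cup P_{i'}$. The four last internal vertices are four distinct $A$-vertices on $P_i\cup P_{i'}$. By the type-$0$ definition, each $A$-vertex $u$ of these rows satisfies $u\mapsto s_2$. Each $B$-vertex $v$ satisfies $t_2\mapsto v$.

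\textbf{Step 4: deriving a contradiction.} Pick $j\neq j'$ such that the first internal vertex $b$ of $Q_j$ lies on $P_i$. There are four such choices and only two rows, so by pigeonhole we can arrange that some $Q_j$ has both its first and last internal vertices $b,a$ adjacent to a common path. If $b$ and $a$ lie on the same $P_i$, then $t_2\to b$ and $a\to s_2$ fail to close up directly, because $P_i$ has the form $A^*B^*$. Instead, use $D[A]$ and $D[B]$ being semicomplete. Compare $a$ with the $A$-vertex $a'$ of $P_{i'}$, and $b$ with the $B$-vertex $b'$ of $P_{i'}$. Build the $(t_2,s_2)$-path $t_2\,b\,b'\,P_{i'}\ldots$ or $t_2\,b'\,\ldots$ through $P_{i'}$ only, using an arc of $D[B]$ and an arc from $B$ back into $A$ inside $P_{i'}$. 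Such a path would be disjoint from the other $P_k$.

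Where such a $B\to A$ arc is unavailable (since each $P$ is $A^*B^*$), use a $Q_j$ instead. Its $B\to A$ transition, together with $t_2\to Y$ and $X\to s_2$, gives a $(t_2,s_2)$-path of only about three internal vertices on $P_i\cup P_{i'}$ that misses two of $P_1,\dots,P_4$. This is enough to contradict every $P_k$ meeting every $Q_j$.

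The main obstacle is Step 4: exhibiting an explicit short rerouting disjoint from some $P_k$. I would first try to show, via (ii) and minimality, that each $Q_j$ has internal vertices only in rows $i,i'$. Then all four $Q_j$ would avoid the two other paths $P_k$, which already contradicts the fact that every $P_k$ meets every $Q_j$.
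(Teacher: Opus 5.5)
There is a genuine gap, and it is in Step 4. Steps 1--3 are correct; Step 1 is exactly how the paper's proof begins. Step 4, however, produces no contradiction.

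\begin{itemize}
\item Your pigeonhole sentence contradicts your own Step 2. You showed that for every $j$ the first internal vertex of $Q_j$ (its $B$-entry) and its last internal vertex (its $A$-entry) lie on \emph{different} paths among $P_i,P_{i'}$. So the case ``$b$ and $a$ lie on the same $P_i$'' never occurs.
\item The rerouting $t_2\,b\,b'\,P_{i'}\ldots$ needs an arc from $B$ back into $A$ inside $P_{i'}$. No such arc exists, since $P_{i'}$ has the form $A^*B^*$.
\item The fallback through the $B\to A$ arc of some $Q_j$ gives no control over which $P_k$ the new walk meets. The segment of $Q_j$ between its first and last internal vertices must itself meet the other two $P$-paths, because every $P_k$ meets every $Q_j$.
\item Your closing plan, to show that each $Q_j$ has all its internal vertices on $P_i\cup P_{i'}$, simply restates the contradiction you want. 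Nothing you have derived about the two neutral rows implies it.
\end{itemize}
The real obstruction comes from the other two rows, which your argument never examines.

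The missing idea is to stay in the matrix and apply (i) and (ii) to the remaining rows. Once each column has exactly one $A$ and one $B$ in the two neutral rows, the column part of (i) does three things: it forbids a third neutral row, forces any positive row to be entirely $A$, and forces any negative row to be entirely $B$. This leaves three cases.
\begin{itemize}
\item \emph{One positive and one negative row.} By (ii), the all-$A$ row excludes negative columns and the all-$B$ row excludes positive columns. So all four columns are neutral, and the all-$A$ row then violates the row part of (i).
\item \emph{Two positive rows.} By (ii) there are no negative columns, and the row part of (i) allows at most one neutral column, so at least three columns are positive. Then by (ii) each neutral row contains at most one $B$: a $B$ in a positive column is the only $B$ in its row, and otherwise its $B$'s lie in the single neutral column. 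But the two neutral rows together contain four $B$'s.
\item \emph{Two negative rows.} This is symmetric to the previous case.
\end{itemize}
The statement for columns follows by transposition, since (i) and (ii) are formulated for both rows and columns with the same letters $A,B$.
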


\begin{proof}
Suppose there were two rows of type $0$. Then (i) would force their entries in every column to
be one \(A\) and one \(B\). Any additional positive row would then
be entirely \(A\), and any additional negative row entirely \(B\).

If one of each occurs, (ii) excludes both negative and positive
columns, leaving four neutral columns, contrary to (i). If both
additional rows are positive, there are no negative columns and
at most one neutral column. Hence there are at least three positive
columns. By (ii), each neutral row has at most one \(B\): if it has
a \(B\) in a positive column this follows directly; otherwise its
only possible \(B\) is in the neutral column. But the two neutral
rows together contain four \(B\)'s, a contradiction. The case of two
negative rows is symmetric.

By transposition, there is also at most one neutral column.
\end{proof}

\begin{claim}\label{cl:2}
If some row is positive (negative), then no other row is negative (positive).
\end{claim}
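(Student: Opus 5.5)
The plan is a pure counting argument on the $4\times 4$ matrix, using only constraints (i) and (ii); Claim~\ref{cl:1} should not be needed. Suppose, for a contradiction, that row $r$ is positive and a different row $r'$ is negative. I will first pin down the entries of rows $r$ and $r'$ in every column, and then apply the row versions of (i) to these two rows.

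\emph{Step 1: the entries in rows $r$ and $r'$.} Fix a column $j$ and look at the pair of entries (row $r$, row $r'$). There are four possibilities.
\begin{itemize}
\item[(a)] The pair is $(B,B)$. The $B$-entry in the positive row $r$ is, by (ii), the only $B$-entry of column $j$. This contradicts the $B$ in row $r'$.
\item[(b)] The pair is $(A,A)$. The $A$-entry in the negative row $r'$ is, by (ii), the only $A$-entry of column $j$. This contradicts the $A$ in row $r$.
\item[(c)] The pair is $(B,A)$. By (ii), every entry of column $j$ other than the one in row $r$ is an $A$. Also by (ii), every entry other than the one in row $r'$ is a $B$. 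There are four rows, so the two remaining rows would receive entries that are simultaneously $A$ and $B$, which is impossible.
\end{itemize}
Hence the pair is $(A,B)$ in every column. So row $r$ consists of four $A$-entries and row $r'$ consists of four $B$-entries.

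\emph{Step 2: counting column types.} Apply the row version of (i) to row $r$. That row has at most one $A$-entry in a column of type $-$ or $0$. Since all four of its entries are $A$, at least three columns are of type $+$.

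Apply the same constraint to row $r'$. That row has at most one $B$-entry in a column of type $+$ or $0$. Since all four of its entries are $B$, at least three columns are of type $-$.

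No path has two types, so each column has exactly one type. We would therefore need at least $3+3=6>4$ columns, a contradiction. The statement with the roles of positive and negative interchanged is the same assertion, so this proves Claim~\ref{cl:2}.

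The step needing most care is case (c) of Step 1. It relies on reading (ii) as saying that every other entry of the column is of the opposite letter, and on having four rows, so that at least one row besides $r$ and $r'$ receives the conflicting requirements. The rest is direct bookkeeping with (i).
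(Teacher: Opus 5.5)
Your proof is correct. It follows the same two-step skeleton as the paper: first show that the positive row is entirely $A$ and the negative row entirely $B$, then get a contradiction from the column types. Both steps, however, use different halves of the constraints, and the result is more economical.

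\emph{Step 1.} The paper starts from a $B$-entry in a positive row. By (ii) the rest of that column is $A$, and the column form of (i) then allows at most one negative or neutral row. This forces positive rows to be entirely $A$, and dually negative rows entirely $B$, only when each weak sign class has at least two rows. The $1+3$ distributions need a separate dual argument. You instead compare the chosen positive and negative rows directly, column by column, using only the column form of (ii). Your case $(B,A)$ exploits the two remaining rows, and this is what lets you skip the paper's case split on how the other rows are typed.

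\emph{Step 2.} The paper uses the row form of (ii) to exclude positive and negative columns. That leaves four neutral columns, which contradicts (i) or Claim~\ref{cl:1}. You apply the row form of (i) directly and obtain at least three positive and at least three negative columns.

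Overall you use only one half of each of (i) and (ii) and never need Claim~\ref{cl:1}. Keep in mind that the paper also asserts the column version of the claim and uses it immediately afterwards. Your argument transposes verbatim, using the row form of (ii) in Step 1 and the column form of (i) in Step 2.
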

\begin{proof}

A \(B\)-entry in a positive row forces all other entries of its
column to be \(A\), by (ii). Then (i) permits at most one negative
or neutral row. Dually, an \(A\)-entry in a negative row permits at
most one positive or neutral row.

Thus, whenever both weak sign classes have size at least two,
positive rows are entirely \(A\) and negative rows entirely \(B\).
In the exceptional \(1+3\) cases the same conclusion follows: an
entirely \(A\) positive row excludes \(A\)-entries from negative
rows by (ii), and dually. These constant rows exclude both negative
and positive columns, a contradiction.

The same holds for columns.
\end{proof}

By Claims \ref{cl:1} and \ref{cl:2} at least three rows have one
common nonzero sign, and at least three columns have one common
nonzero sign. Their signs must agree: three positive rows and three
negative columns would form a \(3\times3\) submatrix with at most
three \(A\)'s by the row part of (ii), and at most three \(B\)'s by
its column part---impossible.

\begin{samepage}
After reversing all arcs and relabelling if necessary, we may
therefore assume
\[
\begin{aligned}
&\text{every }P_i\text{ has type }+\text{ or }0,\\
&\text{every }Q_j\text{ has type }+\text{ or }0,
\end{aligned}
\]
with at least three positive paths in each family.
\end{samepage}

Consequently,
$t_2\to B_i$ for every $i$ and
$Y_j\to t_1$ for every $j$.
Minimality of the paths $P_1,\ldots{},P_4$ and $Q_1,\ldots{},Q_4$ now implies that \textbf{every intersection in \(B\) is
simultaneously the last internal vertex of its \(P_i\) and the first
internal vertex of its \(Q_j\)}. 
Let $I$ be the set of common vertices of the two path families
that lie in $B$. Each $P_i$ and each $Q_j$ contains at most one vertex of $I$. Note that $I$ may be empty.
We construct a split digraph $H$ on
$A\cup I\cup\{t_1,t_2\}$ by replacing the remaining
$B$-segments of the paths with arcs as follows:

\begin{itemize}
\item A \(P_i\)-suffix becomes \(u_i\to t_1\), or \(u_i\to x\to t_1\)
when it contains \(x\in I\).
\item A \(Q_j\)-prefix becomes \(t_2\to v_j\), or \(t_2\to x\to v_j\)
when it contains \(x\in I\).
\end{itemize}
Here \(u_i\) is the last \(A\)-vertex of \(P_i\), possibly \(s_1\),
and \(v_j\) is the first \(A\)-vertex of \(Q_j\), possibly \(s_2\). Note that if some $B_i$ has no intersection with any of the $Y_j$'s, then we contract the path $P_i[u_i,t_1]$ into the arc $u_i\to t_1$ and similarly, if some $Y_j$ has no intersection with any $B_i$, then we contract the path $Q_j[t_2,v_j]$ into the arc $t_2\to v_j$. Finally, if $B_i$ does intersect some $Y_j$, then this is in the vertex $w_{i,j}$ which is both the first internal vertex of $Q_j$ and the last internal vertex of $P_i$ so $Q_j$ contains the arc $t_2\to w_{i,j}$, $P_i$ contains the arc $w_{i,j}\to t_1$ and we replace $Q_j[w_{i,j},v_j]$ by the arc $w_{i,j}\to v_j$ as well as $P_i[u_i,w_{i,j}]$ by the arc $u_i\to w_{i,j}$.

All suppressed segments have mutually disjoint interiors: their
cross-family intersections in \(B\) occur only at the retained
vertices of \(I\).
Retain the original arcs among the vertices of \(A\cup\{t_1,t_2\}\). Make these
vertices together with \(t_1,t_2\) semicomplete by adding, for every $z\in A$, the arcs $t_1\to z$ and $z\to t_2$ wherever $D$ has no adjacency between $z$ and $t_1$, respectively $t_2$.
The completion arcs cannot be used by disjoint \(s_1\to t_1,t_2\to s_2\)
paths: they leave the sink terminal \(t_1\) or enter the source
terminal \(t_2\). Every usable compressed arc lifts to its original
segment. Finally, retain the original adjacency between \(t_1,t_2\), and put no arcs
inside \(I\). Thus \(H\) is a split digraph.\\

The images of the four \(P_i\)'s and four \(Q_j\)'s remain internally
disjoint within their respective families. At least three paths in
each family contain an internal \(A\)-vertex, so compression
preserves four distinct paths even if the remaining path becomes
a single arc.

The local split-digraph theorem therefore gives disjoint \(s_1\to t_1\)
and \(t_2\to s_2\) paths in \(H\). Expanding arcs corresponding to contracted subpaths
preserves vertex-disjointness, giving the required unilateral
linkage in \(D\), a contradiction.
\end{proof}

\begin{problem}
    Is every 5-strong co-bipartite digraph unilaterally 2-linked?
\end{problem}

\section{Unilateral linkages in digraphs of independence number 2}\label{sec:alpha=2linksec}

We now turn our attention to general digraphs of independence number 2. This class of digraphs contains the co-bipartite digraphs but their structure is considerably more difficult to work with.
The purpose is to prove the following. We have not tried to optimize the connectivity but the current proof does not give a better bound.

\Thmunilateralalpha*

To prove Theorem~\ref{thm:alpha-two-unilateral}, we first establish the following lemma about co-bipartite digraphs.

\begin{lemma}\label{lem:oneway-seven-six}
Let $H$ be a co-bipartite digraph with $V(H)=V_1\mathbin{\dot\cup}V_2$ such that
$H[V_1]$ and $H[V_2]$ are semicomplete such that no arc goes from $V_2$ to $V_1$.
Let $s_1,s_2\in V_1$ and $t_1,t_2\in V_2$ be four distinct vertices.
Suppose that $H-\{s_2,t_2\}$ contains seven internally vertex-disjoint
$(s_1,t_1)$-paths and $H-\{s_1,t_1\}$ contains six internally
vertex-disjoint $(s_2,t_2)$-paths.
Then $H$ contains vertex-disjoint $(s_1,t_1)$- and $(s_2,t_2)$-paths.
\end{lemma}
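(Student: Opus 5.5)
The key structural fact is that, since no arc goes from $V_2$ to $V_1$, every $(s_1,t_1)$-path and every $(s_2,t_2)$-path of $H$ has the form $V_1^*V_2^*$. Such a path crosses exactly once, through an arc $ab$ with $a\in V_1$ and $b\in V_2$. So it suffices to find two disjoint crossing arcs $ab$ and $cd$ together with
\begin{itemize}
\item disjoint $(s_1,a)$- and $(s_2,c)$-paths in the semicomplete digraph $H[V_1]$, and
\item disjoint $(b,t_1)$- and $(d,t_2)$-paths in the semicomplete digraph $H[V_2]$.
\end{itemize}
The plan is to produce the crossing arcs from the given path systems and then obtain the two halves separately from Theorem~\ref{thm:local}. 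Take seven internally disjoint $(s_1,t_1)$-paths $P_1,\dots,P_7$ in $H-\{s_2,t_2\}$ and six internally disjoint $(s_2,t_2)$-paths $Q_1,\dots,Q_6$ in $H-\{s_1,t_1\}$, both chosen of minimum total length. Write $a_ib_i$ for the crossing arc of $P_i$ and $c_jd_j$ for that of $Q_j$. By minimality, the $V_1$-prefix and the $V_2$-suffix of each path are minimal paths in a semicomplete digraph.

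Assume, for contradiction, that no linkage exists, so every $P_i$ meets every $Q_j$. The first step is to exploit minimality exactly as in the proof of Theorem~\ref{thm:6-strongcobip}: any ``short cut'' through $s_2$ or $t_2$ must be oriented the wrong way. If $s_2\to u$ for some $V_1$-vertex $u$ of $P_i$ and $v\to t_2$ for some $V_2$-vertex $v$ of $P_i$, then $s_2u\circ P_i[u,v]\circ vt_2$ avoids the other six $P_k$'s, a contradiction. Likewise, a $V_1$-vertex $u$ of $Q_j$ with $u\to s_1$ can only occur as the last such vertex, by minimality.

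The consequence I aim for is a bounded-degree structure. Each $P_i$ can meet each $Q_j$ in many vertices, but the ``useful'' intersections are forced to be near the crossing arcs. Concretely, I want to show that the $V_1$-prefix of each $P_i$ meets at most two of the $Q_j$ in an essential way, so that the others meet $P_i$ only inside $V_2$ (or symmetrically). This yields a bipartite graph with $L=\{P_i\}$ and $R=\{Q_j\}$, after padding to equal size, in which every vertex of $L$ has degree at most two. Lemma~\ref{lem:degree-two-selection} then supplies a set $L_0$ of $p$ of the $P$-paths whose $V_1$-parts together meet at most $p+1$ of the $Q$-paths. With the right $p$, this leaves enough $Q$-paths whose $V_1$-parts avoid the $V_1$-parts of at least three chosen $P$-paths.

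Now apply Theorem~\ref{thm:local} in $H[V_1]$, after deleting a suitable crossing pair. In $H[V_1]-\{s_2,c\}$ we have three internally disjoint $(s_1,a)$-type paths: route the prefixes of three $P$-paths to a common endpoint $a$ using the semicompleteness of $V_1$, or alternatively add an auxiliary sink vertex adjacent from the $a_i$ and apply the theorem in that semicomplete extension. We also have two $(s_2,c)$-paths. Do the same in $H[V_2]$ for the suffixes, with the roles of the two families reversed; this is why the counts seven and six appear, since one side needs $3+2$ spare paths after losing those consumed on the other side.

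The main obstacle is the bookkeeping in the middle step: making the auxiliary-vertex reduction legitimate while guaranteeing that the crossing arcs chosen on the $V_1$ side and on the $V_2$ side are the same two arcs. My first attempt would be to choose the crossing arcs $a_ib_i$ and $c_jd_j$ first, among pairs not hit by the selected $L_0$-structure. Then I would replace $V_2$ by a single contracted sink for each family to apply Theorem~\ref{thm:local} in $V_1$, and check that the $V_2$ side still retains three plus two disjoint paths ending at $t_1$ and $t_2$.
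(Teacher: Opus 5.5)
Your reduction to a $V_1$-half and a $V_2$-half joined by a single crossing arc is the same starting point as the paper, but the middle of your argument does not go through.

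First, the bounded-degree structure you hope for has no justification. The $V_1$-prefixes are minimal paths in a semicomplete digraph, which can be arbitrarily long, and nothing prevents a prefix from meeting all six $Q_j$. Lemma~\ref{lem:degree-two-selection} is used in the paper only in the proof of Theorem~\ref{thm:alpha-two-unilateral}, where degree two comes from looking at just the first and last internal vertices of each $P_i$.

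Second, and more importantly, the obstacle you identify at the end is left open. You cannot fix the endpoints $a,c$ in advance: three prefixes end at three different tails, and the arcs between a tail and $a$ may point the wrong way. So you must use auxiliary sinks. But then the linkage found in $V_1$ decides which crossing arcs are used. You therefore need a guarantee that, for \emph{every} pair that might be selected, the two heads are distinct and can be linked to $t_1,t_2$ in $H[V_2]$. Your plan provides no such guarantee.

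The missing idea is to pre-select crossing arcs by a degree condition on their heads. Let $Y_{\mathcal F}$ be the set of crossing heads of one family. If a head $y$ has out-degree at least $r$ in the semicomplete digraph $H[Y_{\mathcal F}]$, then $y$ has $r+1$ internally disjoint paths in $H[V_2]$ to that family's terminal: its own suffix, and, for each such out-neighbour $z$, the arc $yz$ followed by the suffix through $z$. Take $r=3$ for one family and $r=2$ for the other. Deleting the other head costs at most one path on each side, so Theorem~\ref{thm:local} applies in $H[V_2]$ to any selected pair with distinct heads.

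A semicomplete digraph has at most $2r+1$ vertices of out-degree at most $r$. Hence a family of $m$ paths has at least $m-3$ ``good'' arcs ($r=2$) and at least $m-5$ ``very good'' arcs ($r=3$). This, not ``$3+2$ spare paths'', is where the numbers seven and six come from.

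Heads coinciding across the two families still have to be handled. The paper does this in one of two ways:
\begin{itemize}
\item it discards the at most two conflicting arcs and applies Theorem~\ref{thm:local} to $H[V_1]$ plus two sinks; or
\item when three very good arcs are available, it builds a split digraph in which the chosen heads are independent vertices feeding the sinks $w_1,w_2$, and applies Lemma~\ref{Lemma:{s_1,s_2} in V_1 : (3,3)}; this automatically prevents both paths from using the same head.
\end{itemize}
Crossing arcs leaving $s_1$ or $s_2$ directly are treated separately.
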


\begin{proof}
Let $\mathcal P=\{P_1,\ldots,P_7\}$ be a family of seven
internally vertex-disjoint $(s_1,t_1)$-paths in
$H-\{s_2,t_2\}$, and let $\mathcal Q=\{Q_1,\ldots,Q_6\}$
be a family of six internally vertex-disjoint
$(s_2,t_2)$-paths in $H-\{s_1,t_1\}$.
Since no arc goes from $V_2$ to $V_1$, each of these paths contains
exactly one arc from $V_1$ to $V_2$, which we call its \emph{crossing arc}.
The part before this arc lies entirely in $V_1$, and the part after
it lies entirely in $V_2$.

Let $\mathcal F$ be either of the two families, and let $t$ be its
common last vertex, so $t=t_1$ for $\mathcal P$ and $t=t_2$ for
$\mathcal Q$.
Let $Y_{\mathcal F}$ be the set of heads of its crossing arcs,
excluding $t$.
These heads are distinct within $\mathcal F$, since its paths are
internally disjoint.
We call a crossing arc $xy$ \emph{good} if $y=t$ or
$d^+_{H[Y_{\mathcal F}]}(y)\geq2$, and \emph{very good} if
$y=t$ or $d^+_{H[Y_{\mathcal F}]}(y)\geq3$.
Thus every very good crossing arc is also good.
These definitions are always taken with respect to the family
containing the original path.
Note that several crossing arcs may have head $t$; we count these
arcs separately, according to their original paths.

We record the number of good and very good crossing arcs.
For every nonnegative integer $r$, a semicomplete digraph
has at most $2r+1$ vertices of outdegree at most $r$.
Indeed, if there are $m$ such vertices, their induced subdigraph
has at least $\binom m2$ arcs, whereas the sum of their outdegrees
is at most $rm$.
Consequently, a family of $m$ paths has at least $m-3$ good
crossing arcs and at least $m-5$ very good crossing arcs.
We shall also use the following equality case: if five vertices
have outdegree at most two, then each has outdegree exactly two.
This follows because their induced subdigraph already has at
least $\binom52=10$ arcs, which equals the upper bound on the
sum of their outdegrees.
In particular, if exactly five crossing arcs of a family are
not very good, then all crossing arcs of that family are good.

We next explain why these degree conditions are useful.
Suppose that $y\in Y_{\mathcal F}$ has at least $r$ out-neighbours
in $H[Y_{\mathcal F}]$.
There are then $r+1$ internally disjoint $(y,t)$-paths in $H[V_2]$.
One is the part of its own original path from $y$ to $t$.
For each of $r$ out-neighbours $z$, another is the arc $yz$
followed by the part of the original path from $z$ to $t$.
These paths are internally disjoint because the original paths
in $\mathcal F$ are internally disjoint.
They also avoid the last vertex of the other family.

It follows that if $xy$ is a very good crossing arc of
$\mathcal P$, $uv$ is a good crossing arc of $\mathcal Q$,
and $y\ne v$, then $H[V_2]$ contains vertex-disjoint
$(y,t_1)$- and $(v,t_2)$-paths.
To see this when $y\ne t_1$ and $v\ne t_2$, there are at least four
internally disjoint $(y,t_1)$-paths and at least three internally
disjoint $(v,t_2)$-paths as above.
Deleting $v$ destroys at most one path of the first family,
and deleting $y$ destroys at most one path of the second.
By Theorem~\ref{thm:local}, the semicomplete digraph $H[V_2]$ contains
vertex-disjoint $(y,t_1)$- and $(v,t_2)$-paths.
If $y=t_1$ or $v=t_2$, use the trivial path at that vertex and an
original path segment for the other pair.
The same argument applies with the two families interchanged.

It remains to find disjoint path segments in $V_1$ leading to
crossing arcs whose heads satisfy these conditions.
We prove the following two assertions.

\vskip0.1in

{\bf (i)} \emph{Three very good crossing arcs in one family and three
good crossing arcs in the other are sufficient.}

\vskip0.1in
By symmetry, suppose that the three very good crossing arcs
belong to $\mathcal P$.
If one of the six chosen crossing arcs starts at its own first
terminal, we can use that arc together with an original path
segment in $V_1$ from the other family.
The latter can be chosen to end at a crossing arc with a different
head: a nonterminal head occurs at most once in that family, and
the terminal heads $t_1,t_2$ cannot be shared by the two families.
The two segments in $V_1$ are disjoint because every original path
avoids the other two terminals.
The preceding argument in $H[V_2]$ completes the required linkage.

We may therefore assume that none of the chosen crossing arcs
starts at $s_1$ or $s_2$.
Let $w_1$ and $w_2$ be two new vertices, and let $Z$ be the set
of heads of the six chosen crossing arcs, excluding $t_1$ and $t_2$.
We construct a split digraph $F$ whose independent set is
$I=\{s_1,s_2\}\cup Z$ and whose semicomplete part is
$C=(V_1\setminus\{s_1,s_2\})\cup\{w_1,w_2\}$.
Retain all arcs of $H[V_1\setminus\{s_1,s_2\}]$ and all original arcs
from $s_1,s_2$ into $V_1\setminus\{s_1,s_2\}$.
For each chosen crossing arc $xy$ with $y\in Z$, add $xy$ to $F$.
Also add $yw_1$ if $y$ is the head of a chosen crossing arc
of $\mathcal P$, and add $yw_2$ if it is the head of a chosen
crossing arc of $\mathcal Q$.
A chosen crossing arc $xt_1$ of $\mathcal P$ is represented by
$xw_1$, and a chosen crossing arc $xt_2$ of $\mathcal Q$ is
represented by $xw_2$.
Finally, add $w_1 z$ and $w_2 z$ for every
$z\in V_1\setminus\{s_1,s_2\}$, together with one arc between
$w_1$ and $w_2$.
There are no other arcs in $F$.
In particular, $I$ is independent, $F[C]$ is semicomplete,
and no arc enters $s_1$ or $s_2$.

Each of the three selected $\mathcal P$-paths now gives an
$(s_1,w_1)$-path: follow its original part in $V_1$ and then use
$xyw_1$, or $xw_1$ when the crossing head was $t_1$.
These three paths are internally disjoint and avoid $s_2$ and $w_2$.
Similarly, the three selected $\mathcal Q$-paths give three
internally disjoint $(s_2,w_2)$-paths avoiding $s_1$ and $w_1$.
By Lemma~\ref{Lemma:{s_1,s_2} in V_1 : (3,3)}, we can find vertex-disjoint $(s_1,w_1)$- and $(s_2,w_2)$-paths in $F$.

Neither of these two paths can use an arc leaving $w_1$
or $w_2$, since each avoids the other pair of terminals.
Moreover, a vertex of $Z$ has no out-neighbours other than
$w_1$ and $w_2$.
Thus the first path consists of an original path segment in $V_1$
followed by a chosen crossing arc and the last arc into $w_1$;
if its last arc is $xw_1$ with $x\in V_1$, we replace it by $xt_1$.
The second path has the corresponding form with $w_2$ and $t_2$.
Their crossing heads are distinct, because a shared nonterminal
head is represented by a single vertex of $Z$, and $t_1,t_2$ occur
only for their respective families.
The first head is the head of a selected very good
$\mathcal P$-crossing arc, and the second is the head of a selected
good $\mathcal Q$-crossing arc.
This remains true even if the incoming crossing arc used by a
new path came from the other family.
We can therefore append the disjoint path segments in $H[V_2]$
established above.
Since $V_1$ and $V_2$ are disjoint, this gives the required linkage
in $H$.

\vskip0.1in

{\bf(ii)} \emph{Two very good crossing arcs in one family and three
good crossing arcs in the other are sufficient if no chosen head
from one family equals a chosen head from the other.}

\vskip0.1in

Again, we may suppose that the two very good crossing arcs belong
to $\mathcal P$.
If a chosen crossing arc starts at its own first terminal, the
direct argument in (i) applies.
Otherwise, let $T$ be a digraph on $V_1\cup\{w_3,w_4\}$,
where $w_3$ and $w_4$ are new vertices.
Retain $H[V_1]$, and add $xw_3$ for each chosen
$\mathcal P$-crossing tail $x$ and $uw_4$ for each chosen
$\mathcal Q$-crossing tail $u$.
Make $T$ semicomplete by adding arcs from $w_3$ and $w_4$ into
$V_1$ and one arc between $w_3$ and $w_4$.
The chosen original path segments give two internally disjoint
$(s_1,w_3)$-paths avoiding $s_2$ and $w_4$ and three internally
disjoint $(s_2,w_4)$-paths avoiding $s_1$ and $w_3$.
The crossing tails in each family are distinct, so these counts
are preserved.

By Theorem~\ref{thm:local}, $T$ has disjoint $(s_1,w_3)$- and
$(s_2,w_4)$-paths.
As before, these paths cannot use arcs leaving the two new
terminals. Replace their last arcs by the corresponding chosen crossing arcs in $H$.
The resulting heads are distinct by the assumption in (ii).
Then we append the two vertex-disjoint path segments in $H[V_2]$ to obtain vertex-disjoint $(s_1,t_1)$- and $(s_2,t_2)$-paths.

\smallskip
We now apply (i) and (ii) to the original seven and six paths.
The family $\mathcal P$ has at least two very good crossing arcs,
and $\mathcal Q$ has at least three good crossing arcs.
If $\mathcal P$ has at least three very good crossing arcs,
then the desired vertex-disjoint paths exist by (i).

Suppose that $\mathcal P$ has exactly two very good crossing arcs.
Its five remaining crossing heads have outdegree at most two
in $H[Y_{\mathcal P}]$.
Since these five heads induce at least $\binom{5}{2}=10$ arcs
and each has outdegree at most two in $H[Y_{\mathcal P}]$,
they must all have outdegree exactly two.
Thus all seven $\mathcal P$-crossing arcs are good.
If $\mathcal Q$ has at least two very good crossing arcs, choose
two of them.
Since $\mathcal Q$ avoids $t_1$, each of their heads occurs at most
once among the $\mathcal P$-crossing heads.
Discard the at most two $\mathcal P$-crossing arcs with a
coincident head.
At least five good crossing arcs remain, so we may select three
and apply (ii), with the roles of the families interchanged.

Finally, suppose that $\mathcal Q$ has fewer than two very good
crossing arcs.
It has at least $6-5=1$, so it has exactly one.
Its five other crossing heads again force all six
$\mathcal Q$-crossing arcs to be good.
Choose the two very good $\mathcal P$-crossing arcs, and discard
the at most two $\mathcal Q$-crossing arcs whose heads coincide
with either of their heads.
At least four good crossing arcs remain.
We choose three of these good crossing arcs.
Then it follows from (ii) that the desired vertex-disjoint paths exist.
This completes the proof.
\end{proof}

We now prove Theorem~\ref{thm:alpha-two-unilateral} by applying Lemmas~\ref{lem:degree-two-selection} and~\ref{lem:oneway-seven-six}.%
To use Lemma~\ref{lem:oneway-seven-six}, we shall delete the first and last internal vertices of several paths from one family. %

\begin{proof}[\textbf{Proof of Theorem~\ref{thm:alpha-two-unilateral}}]
Suppose, for a contradiction, that $D$ is a $16$-strong digraph with 
$\alpha(D)\leq2$, and four distinct vertices
$s_1,t_1,s_2,t_2$ such that $D$ has  no unilateral linkage between  $(s_1,s_2)$ and $(t_1,t_2)$.
Every directed path in either direction between either terminal
pair, avoiding the other pair, has at least $16$ vertices.
Indeed, deleting a path with fewer than $16$ vertices leaves
a strong digraph containing the other two terminals, and a path
between those terminals would complete a unilateral linkage.
In particular, $s_1$ and $t_1$ are nonadjacent, as are $s_2$ and $t_2$.

We first choose the two path families.
Both $D-\{s_2,t_2\}$ and $D-\{s_1,t_1\}$ are $14$-strong.
By Menger's theorem, there are fourteen internally disjoint
$(s_1,t_1)$-paths $P_1,\ldots,P_{14}$ avoiding $s_2$ and $t_2$,
and fourteen internally disjoint $(s_2,t_2)$-paths
$Q_1,\ldots,Q_{14}$ avoiding $s_1$ and $t_1$.
We may assume that every $P_i$ is minimal, this means that no arc
goes from a vertex of $P_i$ to a later nonconsecutive vertex.
This is achieved by repeatedly replacing a subpath by a forward
shortcut, which preserves internal disjointness.

Let $G$ be the bipartite graph whose two parts consist of
vertices representing the $(s_1,t_1)$-paths $P_1,\ldots,P_{14}$
and the $(s_2,t_2)$-paths $Q_1,\ldots,Q_{14}$, respectively. The vertices representing $P_i$ and $Q_j$ are adjacent in $G$ if and only if $Q_j$ contains the first or last internal vertex of $P_i$. For convenience, we refer to the paths $P_i$ and $Q_j$ as $P$-paths and $Q$-paths, respectively. 
Each such internal vertex belongs to at most one $Q$-path,
since the $Q$-paths are internally disjoint and the $P$-paths
avoid $s_2$ and $t_2$.
Thus every vertex in the $P$-part of $G$ has degree at most two.
Applying Lemma~\ref{lem:degree-two-selection} with $N=14$ and $p=7$, we can find seven $P$-paths whose first and last internal vertices together meet at most eight $Q$-paths.
After relabelling, retain these paths as $P_1,\ldots,P_7$ and
retain six of the remaining $Q$-paths as $Q_1,\ldots,Q_6$.

Let $E$ consist of the first and last internal vertices of the seven retained $P$-paths, and let
\[
 S=\left(\bigcup_{i=1}^{7}V(P_i)\right)\setminus E.
\]
Every retained $Q$-path avoids $E$.
Also, each $P_i$ has at least $16$ vertices, so $|E|=14$ and
each $P_i$ retains internal vertices in $S$.

We next describe the structure of $D[S]$.
For every $z\in S\setminus\{s_1,t_1\}$, the $P$-path containing $z$ is minimal, and this implies that $s_1z,zt_1\notin A(D)$.
Here the first and last internal vertices were removed precisely
to ensure that either such arc would be a forward shortcut.
Since $s_1$ and $t_1$ are nonadjacent and $\alpha(D)\leq2$, the vertex
$z$ is adjacent to at least one of them.
Its only possible adjacencies to these terminals are therefore
$zs_1$ and $t_1z$.
They cannot both occur, because $t_1zs_1$ would be a directed
path with only three vertices, contrary to the first paragraph
of the proof.
Consequently, $S=V_1\mathbin{\dot\cup}V_2$, where
\begin{align*}
 V_1=\{s_1\}\cup\{z\in S\setminus\{s_1,t_1\}:
                    z\text{ is nonadjacent to }t_1\}
                    \end{align*}                   
and
 \begin{align*}
 V_2=\{t_1\}\cup\{z\in S\setminus\{s_1,t_1\}:
                    z\text{ is nonadjacent to }s_1\}.
\end{align*}
Moreover, every vertex of $V_1\setminus\{s_1\}$ dominates $s_1$,
and $t_1$ dominates every vertex of $V_2\setminus\{t_1\}$.

Both $D[V_1]$ and $D[V_2]$ are semicomplete.
For example, a nonadjacent pair in $V_1$, together with $t_1$,
would be an independent set of size three.
The argument for $V_2$ is the same.
Moreover, no arc goes from $V_2$ to $V_1$.
If $xy\in A(D)$ for some $x\in V_2\setminus\{t_1\}$ and
$y\in V_1\setminus\{s_1\}$, then $t_1xys_1$ is a directed
$(t_1,s_1)$-path avoiding $s_2,t_2$ with only four vertices. This contradicts the fact that every such path has at least $16$ vertices.
Arcs from $t_1$ into $V_1$ or from $V_2$ into $s_1$ are excluded by the definitions of the two parts.

Every retained $Q_j$ meets every retained $P_i$, since otherwise
they themselves form a linkage.
All these intersections lie in $S\setminus\{s_1,t_1\}$.
We claim that, along any $Q_j$, every vertex in $V_1$ precedes
every vertex in $V_2$, whenever both parts are met.
Otherwise, some $u\in V(Q_j)\cap V_2$ precedes some
$v\in V(Q_j)\cap V_1$, and $t_1u\circ Q_j[u,v]\circ vs_1$
is a directed $(t_1,s_1)$-path.
Its internal vertices all lie in the interior of $Q_j$.
It is therefore disjoint from any other retained $Q$-path,
so we again obtain a unilateral linkage, a contradiction. Thus, along each $Q_j$, all vertices in $V_1$ occur before all vertices in $V_2$, whenever both sets are met.

We now construct the auxiliary digraph $H$ on
$S\cup\{s_2,t_2\}$.
Retain all arcs of $D[S]$.
For each retained $P_i$, replace its first two arcs by one arc
and replace its last two arcs by one arc; the removed internal
vertices are exactly its two vertices in $E$.
For each retained $Q_j$, replace every segment between
consecutive vertices of $S\cup\{s_2,t_2\}$ by one arc.
Thus the interior of each such segment contains no vertex
of $S\cup\{s_2,t_2\}$.
Let $V_1'=V_1\cup\{s_2\}$ and $V_2'=V_2\cup\{t_2\}$.
Make $H[V_1']$ and $H[V_2']$ semicomplete by adding the arcs
$xs_2$ for $x\in V_1$ and $t_2y$ for $y\in V_2$ whenever the
corresponding pair is nonadjacent.

No arc of $H$ goes from $V_2'$ to $V_1'$.
This is already true for arcs retained from $D[S]$.
An arc replacing an end segment of a $P$-path starts at
$s_1\in V_1'$ or ends at $t_1\in V_2'$, so it cannot go from
$V_2'$ to $V_1'$.
The arcs replacing segments of a $Q$-path have the same
property by the order proved above, with $s_2$ first in $V_1'$
and $t_2$ last in $V_2'$.
Finally, all arcs added to make the parts semicomplete lie
within the parts.

After compression, the seven $P$-paths are internally disjoint $(s_1,t_1)$-paths in $H-\{s_2,t_2\}$, and the six $Q$-paths are internally disjoint $(s_2,t_2)$-paths in $H-\{s_1,t_1\}$.
These paths remain distinct after compression.
Each $P_i$ retains at least fourteen vertices, and each $Q_j$
meets all seven $P$-paths in distinct retained internal vertices.
By Lemma~\ref{lem:oneway-seven-six}, we have vertex-disjoint $(s_1,t_1)$- and $(s_2,t_2)$-paths in $H$.

It remains to recover these paths in $D$.
Neither path can use an arc added only to make the two parts semicomplete: such an arc enters $s_2$ or leaves $t_2$, whereas the first path avoids both vertices and the second starts at $s_2$ and ends at $t_2$.
Every other arc is an original arc or represents a compressed path segment.

The interiors of all compressed segments lie outside $V(H)$ and are pairwise disjoint.
Indeed, the end segments of the $P$-paths use distinct vertices
of $E$, all retained $Q$-paths avoid $E$, and the interiors of
segments from the $Q$-paths are pairwise disjoint because these
paths are internally disjoint.
If an auxiliary arc has more than one representation, choose
any one of them.
After we replace each auxiliary arc by the path segment it
represents, the resulting $(s_1,t_1)$- and $(s_2,t_2)$-paths lie in $D$ and remain vertex-disjoint.
This contradiction completes the proof.
\end{proof}

\section{Remarks and further open problems}\label{sec:remarks}

For every choice of fixed positive integers $r,k$, Chudnovsky, Scott and Seymour \cite{chudnovskyJCT135} gave a polynomial algorithm for the $k$-linkage problem in digraphs whose vertex set can be covered by $r$ semicomplete digraphs, in particular the case $r=2$ which are the co-bipartite digraphs. However, it was mentioned in \cite{chudnovskyAM270} that the complexity 2-linkage problem for digraphs of independence number 2 was open and as far as we know it is still open (see also \cite{gomesLIPScs334}). Hence the following problem is relevant to ask.

\begin{problem}
    What is the complexity of the unilateral 2-linkage problem for digraphs of independence number 2?
\end{problem}

In the case of arc-disjoint linkages there is a polynomial algorithm for $k$-linkage in digraphs of independence number $\alpha$ for every fixed $k$ and $\alpha$ \cite{fradkinJCT110}.\\

The following conjectures are open already for $\alpha=2$.

\begin{conjecture}
    For every positive integer $\alpha$ there exists an integer $K_{\alpha}$ such that every $K_{\alpha}$-strong digraph $D$ with $\alpha(D)\leq\alpha$ is 2-linked.
\end{conjecture}

\begin{conjecture}
    For every positive integer $\alpha$ there exists an integer $K^u_{\alpha}$ such that every $K^u_{\alpha}$-strong digraph $D$ with $\alpha(D)\leq\alpha$ is unilaterally 2-linked.
\end{conjecture}

By Theorem \ref{thm:alpha-two-unilateral} $K_2^u$ exists and we have $K_2^u\leq 16.$\\

As mentioned in the introduction, the hamiltonian cycle problem is NP-complete for split digraphs \cite{bangJGT108}. For semicomplete multipartite digraphs the hamiltonian cycle problem is polynomial \cite{bangJGT29}. We do not know the complexity for co-bipartite digraphs.

\begin{conjecture}
    The hamiltonian cycle problem is polynomial for co-bipartite digraphs.
\end{conjecture}

For the larger class of digraphs with independence number 2 the complexity of the problem of finding an oriented hamiltonian cycle with the minimum number of backwards arcs was mentioned as an open problem (Problem 4.5) in \cite{gerkearxiv}.

\section*{Acknowledgments}
The support of the National Natural
Science Foundation of China (Grant No. 12571373) and the Natural Science Foundation of Shandong Province
(Grant No. ZR2025MS05) is greatly acknowledged.
\vskip0.2in

\noindent{}{\bf Declaration about the use of AI:} We used AI-assisted tools in the construction of the infinite class of 3-strong split digraphs which are not unilaterally 2-linked, starting from our own construction on 8 vertices.         It also constructed the 4-strong semicomplete multipartite digraph in Figure \ref{Counterexample-4-strong-not-unilaterally-2-linked}. It was AI assistance that suggested the reductions in the proof of Theorem \ref{thm:6-strongcobip} as well as the proofs of Theorems \ref{thm:u2lnknoK} and \ref{thm:alpha-two-unilateral}. The authors take full responsibility for all results and proofs in the paper.

\bibliographystyle{plainnat} %

\end{document}